	\def\@cite#1#2{[\textbf{#1}\if@tempswa , #2\fi]}	
	\def\@biblabel#1{[#1]}								
\newtheorem {theorem}{Theorem}
\newtheorem {lemma}[theorem]{Lemma}
\theoremstyle{definition}
\newtheorem {remark}[theorem]{Remark}
\newcommand{\EE}{\mathbb{E}}
\newcommand{\NN}{\mathbb{N}}
\newcommand{\PP}{\mathbb{P}}
\newcommand{\RR}{\mathbb{R}}
\newcommand{\ZZ}{\mathbb{Z}}
\def\sfN{{\sf N}}
\newcommand{\cA}{\mathcal{A}}
\newcommand{\cD}{\mathcal{D}}
\newcommand{\cB}{\mathcal{B}}
\newcommand{\cF}{\mathcal{F}}
\newcommand{\cE}{\mathcal{E}}
\newcommand{\cH}{\mathcal{H}}
\newcommand{\cK}{\mathcal{K}}
\newcommand{\cL}{\mathcal{L}}
\newcommand{\cM}{\mathcal{M}}
\newcommand{\cT}{\mathcal{T}}
\newcommand{\cG}{\mathcal{G}}
\DeclareMathOperator{\Vol}{Vol}
\DeclareMathOperator{\Var}{Var}
\DeclareMathOperator{\conv}{conv}
\newcommand{\ext}{\operatorname{ext}}
\newcommand{\aff}{\operatorname{aff}}
\newcommand{\cl}{\operatorname{cl}}
\newcommand{\pow}{\operatorname{pow}}
\newcommand{\inter}{\operatorname{int}}
\newcommand{\dint}{\mathrm{d}}
\newcommand{\skel}{\operatorname{
skel}}
\DeclareMathSymbol{\widetildesym}{\mathord}{largesymbols}{"65}
\begin{document}

\title{\bfseries The $\beta$-Delaunay tessellation IV:\\ Mixing properties and central limit theorems}

\author{Anna Gusakova\footnotemark[1],\; Zakhar Kabluchko\footnotemark[2],\; and Christoph Th\"ale\footnotemark[3]}

\date{}
\renewcommand{\thefootnote}{\fnsymbol{footnote}}
\footnotetext[1]{Ruhr University Bochum, Germany. Email: anna.gusakova@rub.de}

\footnotetext[2]{M\"unster University, Germany. Email: zakhar.kabluchko@uni-muenster.de}

\footnotetext[3]{Ruhr University Bochum, Germany. Email: christoph.thaele@rub.de}

\maketitle

\begin{abstract}
	\noindent Various mixing properties of $\beta$-, $\beta'$- and Gaussian Delaunay tessellations in $\mathbb{R}^{d-1}$ are studied. It is shown that these tessellation models are absolutely regular, or $\beta$-mixing. In the $\beta$- and the Gaussian case exponential bounds for the absolute regularity coefficients are found. In the $\beta'$-case these coefficients show a polynomial decay only. In the background are new and strong concentration bounds on the radius of stabilization of the underlying construction. Using a general device for absolutely regular stationary random tessellations, central limit theorems for a number of geometric parameters of $\beta$- and Gaussian Delaunay tessellations are established. This includes the number of $k$-dimensional faces and the $k$-volume of the $k$-skeleton for $k\in\{0,1,\ldots,d-1\}$.\\
	
		\noindent {\bf Keywords:} {Absolute regularity, beta-Delaunay tessellation, beta'-Delaunay tessellation, central limit theorem, Gaussian-Delaunay tessellation, mixing properties, radius of stabilization, stochastic geometry, tail triviality}\\
	{\bf MSC:} 52A22, 52B11, 53C65, 60D05, 60F05.
\end{abstract}

\section{Introduction}

In part I of this paper \cite{GKT1} we introduced two new classes of stationary and isotropic random simplicial tessellations in $\RR^{d-1}$, the so-called $\beta$- and $\beta'$-Delaunay tessellations, which generalize the construction of the classical and well-known Poisson-Delaunay tessellation \cite{SW}. The new models can informally be defined as follows. For simplicity, we focus on the case of the $\beta$-Delaunay tessellation in $\RR^{d-1}$, which similarly to the classical Poisson-Delaunay tessellation is also based on a Poisson point process $\eta_{\beta}$, but this time in the product space $\RR^{d-1}\times [0,\infty)$. Its intensity measure is a constant multiple of $h^\beta\,\dint v\dint h$, where $v\in\RR^{d-1}$ stands for the spatial coordinate and $h>0$ for the height coordinate of a point $x=(v,h)\in\RR^{d-1}\times [0,\infty)$, and $\beta>-1$ is a fixed parameter. In a next step, we construct the paraboloid hull of $\eta_{\beta}$. This is a special germ-grain process with paraboloid grains whose systematic study has been initiated in the work of Schreiber and Yukich \cite{SchreiberYukich}, and further been developed by Calka, Schreiber and Yukich \cite{CalkaSchreiberYukich} and Calka and Yukich \cite{CalkaYukichGaussian} in the context of random convex hulls. We use the paraboloid hull process to construct a random tessellation $\cD_\beta$ in $\RR^{d-1}$ with only simplicial cells as follows. Given $d$ points $x_1=(v_1,h_1),\ldots,x_d=(v_d,h_d)$ of $\eta_\beta$ with almost surely affinely independent spatial coordinates $v_1,\ldots,v_d$, there is a unique translate of the standard downward paraboloid
$$
\Pi^\downarrow := \left\{(v,h)\in\RR^{d-1}\times\RR\colon h\leq -\|v\|^2\right\}
$$
containing $x_1,\ldots,x_d$ on its boundary. We declare $\conv(v_1,\ldots,v_d)$ to be a simplex of the $\beta$-Delaunay tessellation in $\RR^{d-1}$ if and only if the interior of the downward paraboloid determined by $x_1,\ldots,x_d$ does not contain any point of $\eta_{\beta}$. The collection of all $\beta$-Delaunay simplices is called the $\beta$-Delaunay tessellation of $\RR^{d-1}$. This is a stationary and isotropic random tessellation of $\RR^{d-1}$ as we have shown in \cite[Section 3]{GKT1}. In parallel to the construction of $\beta$-Delaunay tessellations we also introduced in \cite{GKT1} the class of $\beta'$-Delaunay tessellations $\cD_{\beta}'$, which is based on a Poisson point process $\eta_\beta'$ on $\RR^{d-1}\times (-\infty,0)$ with intensity measure being a constant multiple of $(-h)^{-\beta}\,\dint v\dint h$ and where $\beta>(d+1)/2$ is a fixed parameter. Realizations of the $\beta$- and the $\beta'$-Delaunay tessellation are shown in Figure \ref{fig:tess}. Additionally, in part II \cite{GKT2} we proved that after suitable rescaling, as $\beta\to\infty$, the $\beta$- and the $\beta'$-Delaunay tessellation converge to a common stationary and isotropic random simplicial limiting tessellation in $\RR^{d-1}$, which we called the Gaussian-Delaunay tessellation. This terminology was motivated by the fact that the typical cell of the limiting tessellation could be identified in distribution with a volume-weighted Gaussian random simplex.

	\begin{figure}[t]
		\begin{center}
		\includegraphics*[width=0.45\columnwidth]{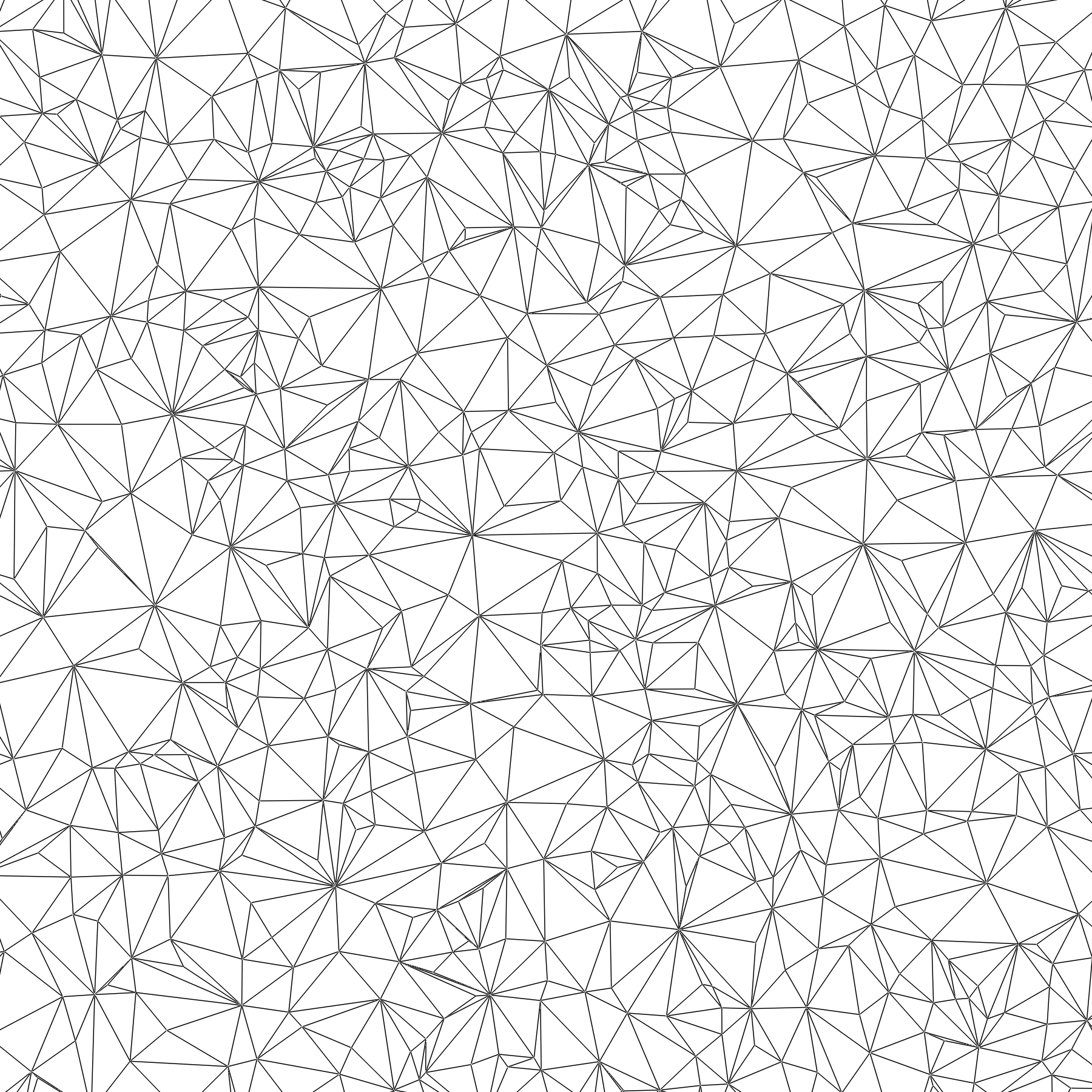}\qquad
		\includegraphics*[width=0.45\columnwidth]{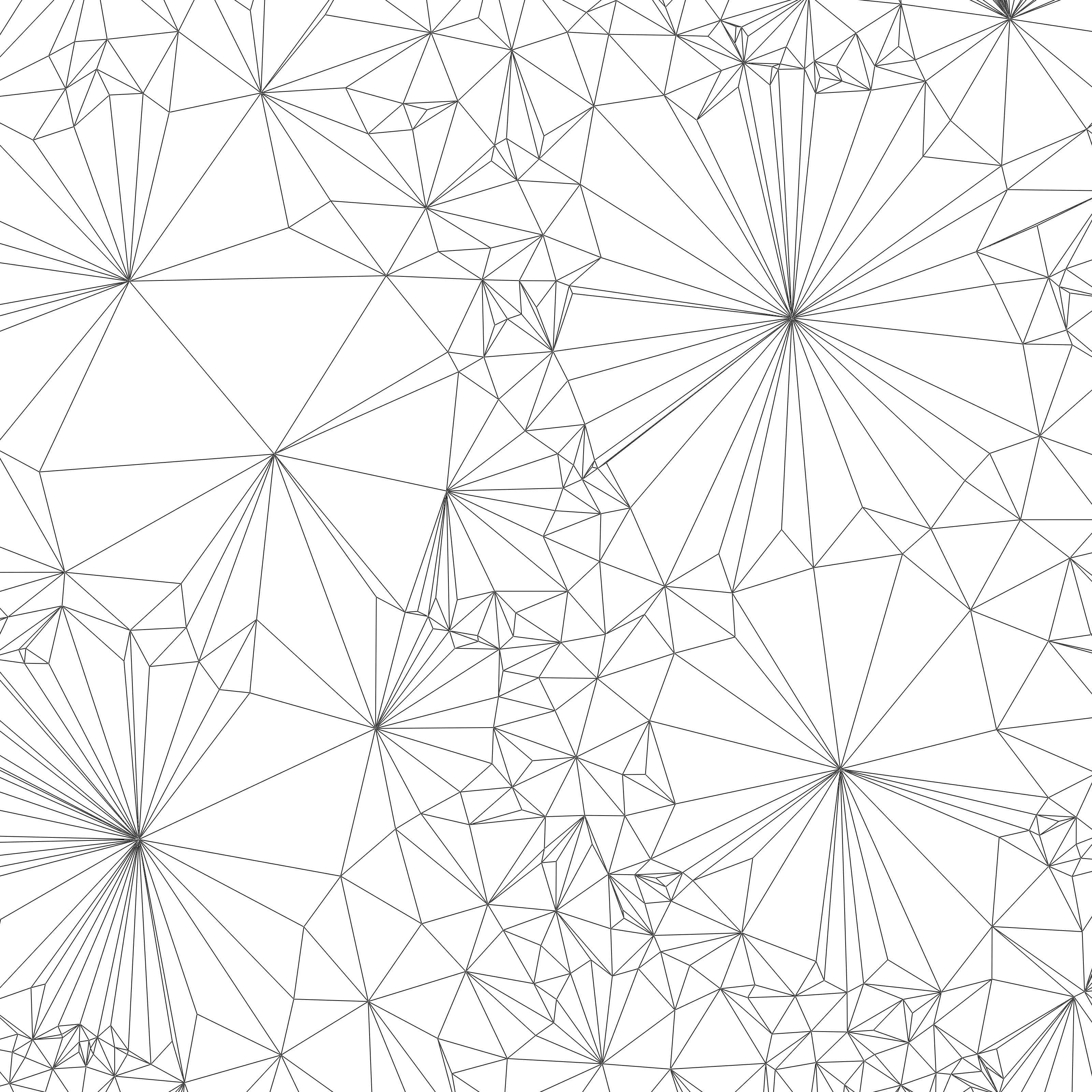}
		\label{fig:tess}
		\caption{Realizations of a $\beta$-Delaunay tessellation for $\beta=15$ (left) and a $\beta'$-Delaunay tessellation for $\beta=2.5$ in $\RR^2$.}
	\end{center}
	\end{figure}

We would like to emphasize that the $\beta$, $\beta'$- and Gaussian-Delaunay tessellations are of interest in the study of the asymptotic geometry of random convex hulls. For example, the $\beta$-Delaunay tessellation reflects the local asymptotic structure of so-called random $\beta$-polytopes in the $d$-dimensional unit ball close to its boundary. In fact, after suitable rescaling, the surface of the ball locally converges to its tangent space and the radial projection of the boundary of the random $\beta$-polytope converges to the  $\beta$-Delaunay tessellation in this tangent space. {A similar interpretation also holds for Gaussian random polytopes and the Gaussian-Delaunay tessellation. Finally, $\beta'$-Delaunay tessellations are conjectured to arise from the so-called beta$^*$ polytopes.}

The purpose of the present paper is the study of several mixing properties for all the random tessellation models mentioned so far, that is, the $\beta$-, the $\beta'$- and the Gaussian-Delaunay tessellation. More precisely, we study the following hierarchy of concepts:
\begin{itemize}
\item ergodicity,
\item mixing of any order, also known as ergodic mixing,
\item tail triviality, also known as Kolmogorov's K-property,
\item $\alpha$-mixing, also known as strong mixing,
\item absolute regularity, also known as $\beta$-mixing (a terminology we do not follow to avoid confusion with the model parameter $\beta$).
\end{itemize}
Our main result is an explicit exponential bound on the absolute regularity coefficient of a $\beta$- and Gaussian-Delaunay tessellation as well as a polynomial bound in case of the $\beta'$-Delaunay tessellation. The proof relies on strong estimates for the so-called radius of stabilization of our construction, which considerably refines previous results from part II of this paper \cite{GKT2} and are of independent interest. While for the purposes in part II much less explicit bounds were sufficient, in order to handle the absolute regularity coefficient in the way we need, much finer control on the radius of stabilization is required. In a sense, this is the technical heart of our paper.

As an application of our bounds on the absolute regularity coefficients we develop a central limit theory for combinatorial and also for geometric parameters associated with $\beta$- and Gaussian-Delaunay tessellations, using a rather general central limit theorem of Heinrich \cite{Heinrich94,HeinrichBook} for absolutely regular stationary random random fields. More precisely, we prove that the number of $k$-dimensional faces of these tessellation models that can be observed in a sequence of cubes of increasing edge lengths is asymptotically Gaussian. A similar result is also shown for the natural volume measure on the so-called $k$-skeleton, which is the union of all $k$-dimensional faces of all tessellation cells.
This complements the limit theorems studied in part III \cite{GKT3} of this series of papers, which focus on a high-dimensional setting.

\bigskip

The remaining parts of this text are structured as follows. In Section \ref{sec:Background} we collect some background material in order to keep the paper reasonably self-contained, whereas in Section \ref{sec:MixingConcepts} we formally recall the mixing concepts for stationary random closed sets mentioned above. Fine estimates for the radius of stabilization are developed in Section \ref{sec:Stabilization}, which, as anticipated above, is the technical heart of our paper. The results we develop there are used in Section \ref{sec:AbsoluteRegularity} to prove that the $\beta$-, the $\beta'$- and the Gaussian-Delaunay tessellations are absolutely regular and we also provide sharp bounds for their absolute regularity coefficients. In the final Section \ref{sec:CLTs} we use these bounds to deduce the announced central limit theorems for the number of $k$-dimensional faces and the empirical $k$-volume density.

\section{Background material}\label{sec:Background}

\subsection{Notation}

Denote by $B_a(w)$ the $(d-1)$-dimensional closed ball of radius $a$ centred at $w$ and by $B_a$ the $(d-1)$-dimensional radius-$a$-ball centred at zero. Let
$$
\kappa_{d-1}={\pi^{d-1\over 2}\over \Gamma (1+{d-1\over 2})}
$$
be the volume of $(d-1)$-dimensional unit ball. Given a set $K\subset \RR^{d-1}$ we denote by $\cl K$ the closure of $K$, by $\partial K$ its topological boundary, and by $\aff(K)$ and $\conv(K)$ its affine or convex hull, respectively.

For two functions $f,g:\RR^k\to\RR$ We will frequently use the notation $f(x_1,\ldots,x_k)\ll g(x_1,\ldots,x_k)$, which means that there exists a positive constant $c$, independent of $x_1,\ldots,x_k$, such that $f(x_1,\ldots,x_k)\leq c g(x_1,\ldots,x_k)$.

\subsection{Description of the tessellation models}

In this section we briefly recall the construction of the $\beta$-Delaunay, the $\beta'$-Delaunay and the Gaussian-Delaunay tessellation we introduced in part I \cite{GKT1} and II \cite{GKT2} of this series of papers. We also refer the reader to these texts for further background material, details and proofs.

We start by introducing the three Poisson point processes on which the construction of our tessellations is based. For $\beta>-1$ let $\eta_\beta$ be a Poisson point process on the product space $\RR^{d-1}\times[0,\infty)$ whose intensity measure has density
\begin{equation}\label{eq:intensityBeta}
(v,h)\mapsto c_{d,\beta}h^\beta,\qquad c_{d,\beta}:={\Gamma({d\over 2}+\beta+1)\over\pi^{d\over 2}\Gamma(\beta+1)},
\end{equation}
with respect to the Lebesgue measure on $\RR^{d-1}\times[0,\infty)$. Next, for $\beta>(d+1)/2$ let $\eta_\beta'$ be a Poisson point process on $\RR^{d-1}\times(-\infty,0]$ whose intensity measure has density
$$
(v,h)\mapsto c_{d,\beta}'(-h)^{(-\beta)},\qquad c'_{d,\beta}:={\Gamma(\beta)\over\pi^{d\over 2}\Gamma(\beta-{d\over 2})},
$$
with respect to the Lebesgue measure on $\RR^{d-1}\times(-\infty,0]$. Finally, we denote by $\zeta$ a Poisson point process on $\RR^{d-1}\times\RR$ whose intensity measure has density
\begin{equation}\label{eq:intensityGaussian}
(v,h)\mapsto {1\over(2\pi)^{d/2}}e^{h/2}
\end{equation}
with respect to the Lebesgue measure on that space. Whenever possible, we will treat these three cases in parallel and write $\xi$ in what follows for one of the Poisson processes $\eta_\beta$, $\eta_\beta'$ or $\zeta$.

We shall now briefly describe the two different ways to construct the $\beta^{(')}$- and Gaussian Delaunay tessellation. For the first approach, let $\Pi_{\pm}$ be the standard upward ($+$) or downward ($-$) paraboloid $\Pi_{\pm}:=\{(v,h)\in\RR^{d-1}\times\RR:h=\pm\|v\|^2\}$ and $\Pi_{\pm,x}$ be the shifted standard paraboloid with apex at $x\in\RR^d$. For a set $A\subset\RR^d$ we let $A^{\uparrow}:=\{(v,h')\in\RR^{d-1}\times\RR\colon (v,h) \in A \text{ for some } h\leq h'\}$ and $A^{\downarrow}:=\{(v,h')\in\RR^{d-1}\times\RR\colon (v,h) \in A \text{ for some } h\ge h'\}$. The paraboloid growth process $\Psi(\xi)$ based on $\xi$ is now defined as
$$
\Psi(\xi):=\bigcup_{x\in\xi}\Pi_{+,x}^\uparrow.
$$
In other words, $\Psi(\xi)$ is the Boolean model with paraboloid grains based on $\xi$. A point $x\in\xi$ is called extreme in $\Psi(\xi)$ if and only if its associated paraboloid is not fully covered by the paraboloids associated with other points of $\xi$. The collection of such extreme points is denoted by $\ext(\Psi(\xi))$.

Next, we introduce the paraboloid hull process, which can be regarded as dual to the paraboloid growth process. For any collection $x_1:=(v_1,h_1),\ldots,x_k:=(v_k,h_k)$ of $1\leq k\leq d$ points in $\RR^{d-1}\times\RR$ with affinely independent spatial coordinates $v_1,\ldots,v_k\in  \RR^{d-1}$, we define $\Pi(x_1,\ldots,x_k)$ to be the intersection of $\aff(v_1,\ldots,v_k)\times\RR$ with any translate of $\Pi_{-}$ containing all the points $x_1,\ldots,x_k$ (although such translates of $\Pi_{-}$ are not unique for $k<d$, their intersections with $\aff(v_1,\ldots,v_k)\times\RR$ all coincide so that $\Pi(x_1,\ldots,x_k)$ is well defined). Further, we define the parabolic face $\Pi[x_1,\ldots,x_k]$ as
\[
\Pi[x_1,\ldots,x_k]:=\Pi(x_1,\ldots,x_k)\cap \left(\conv(v_1,\ldots,v_k)\times\RR\right).
\]
The paraboloid hull $\Phi(\xi)$ of $\xi$ is given by
$$
	\Phi(\xi)=\bigcup\limits_{(x_1,\ldots,x_d)\in\xi_{\neq}^d}\left(\Pi[x_1,\ldots, x_d]\right)^{\uparrow},
$$
where $\xi_{\neq}^d$ is the collection of all $d$-tuples of distinct points of $\xi$. In this context, the points $x_1,\ldots,x_d$ are called vertices of the paraboloid face $\Pi[x_1,\ldots, x_d]$ and we let ${\rm vert}(\Phi(\xi))$ be the collection of vertices of all such paraboloid faces. The link between $\Psi(\xi)$ and $\Phi(\xi)$ arises from the fact that $\ext(\Psi(\xi))={\rm vert}(\Phi(\xi))$.

Based on the paraboloid hull processes $\Phi(\xi)$ we are now in the position to define a random tessellation of $\RR^{d-1}$ as follows. For any paraboloid face of $\Phi(\xi)$ with vertices $x_1,\ldots, x_m\in\xi$ we consider a polytope $\conv(v_1,\ldots,v_m)$ and let $\cM_{\Phi}(\xi)$ be a collection of all such polytopes. With probability one $\cM_{\Phi}(\xi)$ is a stationary random tessellation of $\RR^{d-1}$ only having simplicial cells. In this case the construction above can be alternatively described as follows: for any collection $x_1=(v_1,h_1),\ldots,x_d=(v_d,h_d)$ of pairwise distinct points from $\xi$ we say that the simplex $\conv(v_1,\ldots,v_d)$ belongs to $\cM_{\Phi}(\xi)$ if and only if $\inter\Pi(x_1,\ldots,x_d)\cap\eta_{\beta}=\emptyset$.

Another alternative description of $\cM_{\Phi}(\xi)$ can be made in terms of the dual of a Laguerre tessellation, as already mentioned in the introduction. To formally introduce this approach, for a point $w\in\RR^{d-1}$ and $(v,h)\in\xi$ define the power function $\pow(w,(v,h)):=\|v-w\|^2+h$
and the Laguerre cell
$$
C((v,h),\xi):=\{w\in\RR^{d-1}\colon \pow(w,(v,h))\leq \pow(w,(v',h'))\text{ for all }(v',h')\in \xi\}.
$$
We emphasize that it is not necessarily the case that a Laguerre cell is non-empty or that it contains interior points. The collection of all Laguerre cells of $\xi$ with non-vanishing interior is called the Laguerre tessellation of $\xi$ and we write
\[
\cL(\xi):=\{C((v,h),\xi)\colon (v,h)\in \xi,\; {\rm int}(C((v,h),\xi))\neq\varnothing\}.
\]
Let $\cL^*(\xi)$ be the dual tessellation of $\cL(\xi)$. This tessellation arises from $\cL(\xi)$ by including for distinct points $x_1=(v_1,h_1),\ldots,x_d=(v_d,h_d)$ of $\xi$ the simplex $\conv(v_1,\ldots,v_d)$ in $\cL^*(\xi)$ if and only if the Laguerre cells corresponding to $(v_1, h_1),\ldots,(v_d, h_d)$ all have non-empty interior and share a common point. In our case $\cL^*(\xi)$ is almost surely a stationary random simplicial tessellation and coincides with $\cM_{\Phi}(\xi)$ as introduced earlier.

Finally, we introduce the notation $\skel(\cL^*(\xi))=\skel(\cM_{\Phi}(\xi))$ for the random set in $\RR^{d-1}$ arising from the union of all cell boundaries of cells from $\cL^*(\xi)$ or $\cM_{\Phi}(\xi)$.

\section{Mixing concepts for stationary random closed sets}\label{sec:MixingConcepts}

In this section we recall the definition of various concepts of mixing properties for stationary random closed sets in $\RR^{d-1}$. To this end we denote by $\cF^{d-1}:=\cF(\RR^{d-1})$ the set of closed subsets of $\RR^{d-1}$ and by $\mathfrak{F}^{d-1}:=\mathfrak{F}(\RR^{d-1})$ the Borel $\sigma$-field on $\cF^{d-1}$ generated by the usual Fell topology, see \cite[Chapter 2]{SW}. For $\cA\in\mathfrak{F}^{d-1}$ and $v\in\RR^{d-1}$ we define $T_vA:=\{a+v:a\in A\}$, $A\in\cA$, and put $T_v\cA:=\{T_vA:A\in\cA\}$. From now on $Z\subset\RR^{d-1}$ will denote a stationary random closed set. We call $\cA\in\mathfrak{F}^{d-1}$ invariant if $\PP(\{Z\in\cA\}\Delta\{Z\in T_v\cA\})=0$ for all $v\in\RR^{d-1}$, where $\Delta$ denotes the symmetric difference. The stationary random closed set $Z$ is called \textbf{ergodic} if $\PP(Z\in \cA)\in\{0,1\}$ for any $\cA$ from the $\sigma$-algebra generated by the class of invariant sets. This is known to be equivalent to require that
$$
\lim_{n\to\infty}{1\over (2n)^{d-1}}\int_{[-n,n]^{d-1}}\PP(\{Z\in\cA_0\}\cap\{Z\in T_v\cA_1\})\,\dint v = \PP(Z\in\cA_0)\PP(Z\in \cA_1)
$$
for all $\cA_0,\cA_1\in\mathfrak{F}^{d-1}$.  The random closed set $Z$ is called (ergodic) \textbf{mixing} if
$$
\lim_{\|v\|\to\infty}\PP(\{Z\in\cA_0\}\cap\{Z\in T_v\cA_1\}) = \PP(Z\in\cA_0)\PP(Z\in\cA_1)
$$
for all $\cA_0,\cA_1\in\mathfrak{F}^{d-1}$, where $\|v\|\to\infty$ indicates that $v$ runs through an arbitrary sequence $(v_n)_{n\geq 1}\subset\RR^{d-1}$ satisfying $\|v_n\|\to\infty$, as $n\to\infty$. More generally, for integers $\ell\geq 1$ one calls $Z$ (ergodic) \textbf{mixing of order $\ell$} if
{
$$
\lim_{n\to\infty} \PP(\{Z\in\cA_0\}\cap \{Z\in T_{v_n^{(1)}}\cA_1\}\cap\ldots\cap \{Z\in T_{v_n^{(\ell)}}\cA_\ell\}) = \PP(Z\in\cA_0)\PP(Z\in\cA_1)\cdots\PP(Z\in\cA_\ell)
$$}
for all $\cA_0,\cA_1,\ldots,\cA_\ell\in\mathfrak{F}^{d-1}$, where the limit runs through arbitrary sequences $(v_n^{(i)})_{n\geq 1}$, $i\in\{1,\ldots,\ell\}$, satisfying $\|v_n^{(i)}\|\to\infty$ for $i\in\{1,\ldots,\ell\}$ and $\|v_n^{(i)}-v_n^{(j)}\|\to\infty$ for $i,j\in\{1,\ldots,\ell\}$ with $i\neq j$, as $n\to\infty$. Note that saying that $Z$ is mixing of order $1$ is the same as that $Z$ is just mixing.

Next, we define for $n\geq 1$ the following sub-$\sigma$-algebras of $\mathfrak{F}^{d-1}$:
\begin{align*}
\mathfrak{F}_{n}^{d-1} &:=\sigma\{\{F\in\cF^{d-1}:F\cap C=\varnothing\}:C\subset B_n\text{ compact}\},\\
\mathfrak{F}_{-n}^{d-1} &:=\sigma\{\{F\in\cF^{d-1}:F\cap C=\varnothing\}:C\subset\RR^{d-1}\setminus B_n\text{ compact}\},
\end{align*}
as well as the so-called \textbf{tail $\sigma$-algebra}
$$
\mathfrak{F}_{-\infty}^{d-1} := \bigcap_{n=1}^\infty\mathfrak{F}_{-n}.
$$
The stationary random closet set $Z$ is called \textbf{tail-trivial}, provided that $\PP(Z\in \cA)\in\{0,1\}$ for all $\cA\in\mathfrak{F}_{-\infty}^{d-1}$.

There are also two stronger types of mixing conditions we consider here, namely $\alpha$-mixing and absolute regularity (also known as $\beta$-mixing, but we do not follow this notion here to avoid confusion with our model parameter $\beta$). Given $0<a<b$ we define the two functions
$$
\mathscr{A}(a,b):=\sup\limits_{\cA\in\mathfrak{F}_{a}^{d-1}\atop \cA^{\prime}\in\mathfrak{F}_{-b}^{d-1}}|\PP(\{Z\in\cA\}\cap\{Z\in\cA^{\prime}\})-\PP(Z\in\cA)\PP(Z\in\cA^{\prime})|,
$$
and
$$
\mathscr{B}(a,b):={1\over 2}\sup\sum\limits_{i=1}^{I}\sum\limits_{j=1}^{J}|\PP(\{Z\in\cA_i\}\cap\{Z\in\cA^{\prime}_j\})-\PP(Z\in\cA_i)\PP(Z\in\cA^{\prime}_j)|,
$$
where the supremum in the definition of $\mathscr{B}(a,b)$ is taken over all pairs of finite partitions of $\cF^{d-1}$: $\bar{\cA}:=\{\cA_i\in\mathfrak{F}_{a}^{d-1}\colon i=1,\ldots,I\}$, $\bar{\cA^{\prime}}:=\{\cA_j^{\prime}\in\mathfrak{F}_{-b}^{d-1}\colon j=1,\ldots, J\}$. The stationary random closed set $Z$ is called \textbf {$\alpha$-mixing} if for any $a>0$, $\lim\limits_{b\to\infty}\mathscr{A}(a,b)=0$ and $Z$ is called \textbf {absolutely regular} if for any $a>0$, $\lim\limits_{b\to\infty}\mathscr{B}(a,b)=0$.

The next lemma describes the relations between these different concepts.

\begin{lemma}
Let $Z$ be a stationary random closet set in $\RR^{d-1}$.
\begin{itemize}
	\item[(i)] If $Z$ is mixing then $Z$ is ergodic.
	\item[(ii)] If $Z$ is mixing of some order $\ell\geq 1$ then $Z$ is mixing.
	\item[(iii)] If $Z$ is tail-trivial then $Z$ is mixing of any order $\ell\geq 1$.
	\item[(iv)] If $Z$ is $\alpha$-mixing then $Z$ is tail-trivial.
	\item[(v)] If $Z$ is absolutely regular then $Z$ is $\alpha$-mixing.
\end{itemize}
\end{lemma}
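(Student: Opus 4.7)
The plan is to deduce each implication by a direct probabilistic argument tailored to the hit-or-miss structure of the local $\sigma$-algebras $\mathfrak{F}_r^{d-1}$ and $\mathfrak{F}_{-r}^{d-1}$, which together generate the Fell Borel field $\mathfrak{F}^{d-1}$. Parts (i) and (ii) are short. For (i), mixing gives pointwise convergence of the integrand in the Ces\`aro characterisation of ergodicity recalled in the text; since that integrand is bounded by $1$ and the Lebesgue measure of $\{v\in[-n,n]^{d-1}:\|v\|\leq R\}$ is $O(R^{d-1})=o(n^{d-1})$, a standard bounded convergence estimate delivers the ergodic limit. For (ii), given a sequence $v_n$ with $\|v_n\|\to\infty$ I set $v_n^{(i)}:=i\,v_n$ for $i=2,\ldots,\ell$, so that $\|v_n^{(i)}\|\to\infty$ and $\|v_n^{(i)}-v_n^{(j)}\|=|i-j|\,\|v_n\|\to\infty$, and invoke mixing of order $\ell$ with $\cA_2=\cdots=\cA_\ell=\cF^{d-1}$; the corresponding probabilities are identically one, so ordinary mixing drops out.

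Part (iii) is the main technical step. Every $\cA\in\mathfrak{F}^{d-1}$ can be approximated in probability by local events $\cA^{(r)}\in\mathfrak{F}_r^{d-1}$ as $r\to\infty$, since the hit-or-miss classes $\{F:F\cap C=\varnothing\}$ with $C$ compact generate $\mathfrak{F}^{d-1}$. The key geometric observation is that if $\cA^{(r)}\in\mathfrak{F}_r^{d-1}$ and $\|v\|>r+m$, then $T_v\cA^{(r)}\in\mathfrak{F}_{-m}^{d-1}$. Hence for fixed $r$ and $n$ large the intersection $\bigcap_{i=1}^{\ell}\{Z\in T_{v_n^{(i)}}\cA_i^{(r)}\}$ lies in $\mathfrak{F}_{-m}^{d-1}$ for arbitrarily large $m$, that is, it is asymptotically tail-measurable, so tail-triviality forces asymptotic independence from the fixed local event $\{Z\in\cA_0^{(r)}\}$. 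An induction on $\ell$, peeling off one translated factor at a time while exploiting $\|v_n^{(i)}-v_n^{(j)}\|\to\infty$ to keep the remaining translates tail-like, produces the full factorisation, after which $r\to\infty$ removes the approximation errors. The main obstacle is the rigorous handling of joint asymptotic independence; I would address it by successive conditioning combined with stationarity to relocate the remaining factors at each step.

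Parts (iv) and (v) are again short. For (iv), fix $\cA\in\mathfrak{F}_{-\infty}^{d-1}$, so $\cA\in\mathfrak{F}_{-b}^{d-1}$ for every $b>0$. Approximate $\cA$ by local events $\cA^{(n)}\in\mathfrak{F}_n^{d-1}$, apply the $\alpha$-mixing bound $\mathscr{A}(n,b)$ to the pair $(\cA^{(n)},\cA)$, and let $b\to\infty$ for fixed $n$ to obtain independence of $\cA^{(n)}$ and $\cA$; then let $n\to\infty$ to deduce $\PP(Z\in\cA)=\PP(Z\in\cA)^2\in\{0,1\}$. For (v), given $\cA\in\mathfrak{F}_a^{d-1}$ and $\cA'\in\mathfrak{F}_{-b}^{d-1}$, the binary partitions $\{\cA,\cF^{d-1}\setminus\cA\}$ and $\{\cA',\cF^{d-1}\setminus\cA'\}$ are admissible in the definition of $\mathscr{B}(a,b)$, and all four corresponding summands equal $|\PP(\{Z\in\cA\}\cap\{Z\in\cA'\})-\PP(Z\in\cA)\PP(Z\in\cA')|$; hence $\mathscr{A}(a,b)\leq 2\mathscr{B}(a,b)$ and absolute regularity implies $\alpha$-mixing.
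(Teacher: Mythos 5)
Your proposal is correct in substance, but it is a genuinely different kind of proof from the one in the paper: the paper disposes of (i), (iii), (iv) and (v) almost entirely by citation (ergodicity from \cite{SW}, the tail-triviality-implies-higher-order-mixing statement from \cite[Theorem 6.3.6]{KMK}, the implication $\alpha$-mixing $\Rightarrow$ tail triviality from \cite{Bradley}, and the inequality $\mathscr{A}(a,b)\leq\tfrac12\mathscr{B}(a,b)$ from \cite[Eq.\ (1.11)]{Bradley}), whereas you reconstruct the underlying arguments directly from the hit-or-miss structure of $\mathfrak{F}_r^{d-1}$ and $\mathfrak{F}_{-r}^{d-1}$. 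Your treatments of (i), (ii) and (iv) are complete and correct; the choice $v_n^{(i)}=i\,v_n$ in (ii) is a clean way to make precise what the paper dismisses as immediate from the definition, and the observation that $T_v\cA^{(r)}\in\mathfrak{F}_{-m}^{d-1}$ once $\|v\|>r+m$ is exactly the right geometric input. What your self-contained route costs you is that the two nontrivial analytic ingredients hidden in the citations must be supplied explicitly, and one of them is still missing: in (iii) you assert that tail triviality ``forces asymptotic independence'' of a fixed local event $\{Z\in\cA_0^{(r)}\}$ from events in $\mathfrak{F}_{-m_n}^{d-1}$ with $m_n\to\infty$. The standard justification is the backward martingale convergence theorem applied to the decreasing filtration $(\mathfrak{F}_{-m}^{d-1})_m$, giving $\EE[\mathbf{1}_{\{Z\in\cA_0^{(r)}\}}\mid\mathfrak{F}_{-m}^{d-1}]\to\PP(Z\in\cA_0^{(r)})$ in $L^1$ under tail triviality, whence $\sup_{B\in\mathfrak{F}_{-m}^{d-1}}|\PP(A\cap B)-\PP(A)\PP(B)|\to0$ uniformly over the remote event; without naming this, your key step in (iii) is an assertion rather than a proof, although the subsequent induction via stationarity (recentring at $v_n^{(1)}$ and using $\|v_n^{(i)}-v_n^{(j)}\|\to\infty$) is exactly how the cited proof proceeds. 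Finally, in (v) your own computation (four equal summands in a $2\times2$ partition, together with the factor $\tfrac12$ in the definition of $\mathscr{B}$) actually yields the sharper bound $\mathscr{A}(a,b)\leq\tfrac12\mathscr{B}(a,b)$ used in the paper; the inequality $\mathscr{A}(a,b)\leq2\mathscr{B}(a,b)$ you wrote down is true and suffices for the implication, but it does not match what your argument proves.
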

\begin{proof}
That the mixing property of $Z$ implies ergodicity is well known , see \cite[p.\ 408]{SW}. Also, from the definition it directly follows that if $Z$ is mixing of some order $\ell\geq 1$ then $Z$ is just mixing. Next, we need to prove that if $Z$ is tail-trivial then $Z$ is mixing of any order $\ell\geq 1$. For stationary random measures this is the content of \cite[Theorem 6.3.6]{KMK} and the proof carries over literally to stationary random closed sets (see also \cite[Exercise 12.3.7]{DVJ}). That $\alpha$-mixing implies tail-triviality can be found in \cite[Section 2.5, p.\ 116]{Bradley}. The final assertion is a consequence of the inequality $\mathscr{A}(a,b)\leq{1\over 2}\mathscr{B}(a,b)$ for which we refer to \cite[Equation (1.11)]{Bradley}.
\end{proof}

In particular, the last lemma shows that once absolute regularity of a stationary random closed set is established, all other (weaker) mixing notions follow automatically. This is the reason why from now on we focus on absolute regularity only. In order to establish the absolute regularity property we will rely on the following simple lemma we took from \cite[Lemma 1]{MN15}.

\begin{lemma}\label{lm:betamixing}
Let $0<a<b$ and $\varepsilon >0$. If for some $\cB\in\mathfrak{F}_{a}^{d-1}$ it holds that for any $\cA^{\prime}\in\mathfrak{F}^{d-1}_{-b}$ and for any $\cA\in\mathfrak{F}^{d-1}_a$ with $\cA\subseteq\cB$, $\PP(Z\in\cA)>0$ we have
$$
|\PP(Z\in\cA^{\prime}|Z\in\cA)-\PP(Z\in\cA^{\prime})|\leq\varepsilon,
$$
then $\mathscr{B}(a,b)\leq \epsilon\PP(Z\in\cB)+\PP(Z\not\in\cB)$.
\end{lemma}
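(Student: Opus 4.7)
The plan is to bound the double sum figuring in the definition of $\mathscr{B}(a,b)$ uniformly over an arbitrary pair of admissible finite partitions $\bar\cA = \{\cA_i\}_{i=1}^{I}$ in $\mathfrak{F}_{a}^{d-1}$ and $\bar{\cA^{\prime}} = \{\cA^{\prime}_j\}_{j=1}^{J}$ in $\mathfrak{F}_{-b}^{d-1}$. The only structural idea is to split each part $\cA_i$ along the distinguished set $\cB$ as $\cA_i = (\cA_i \cap \cB) \cup (\cA_i \cap \cB^c)$ and to apply the triangle inequality inside the sum; this yields a \emph{good} contribution indexed by the pieces inside $\cB$, which I will control via the hypothesis, and an \emph{exceptional} contribution indexed by the pieces outside $\cB$, which will only cost $\PP(Z \notin \cB)$.

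For the good contribution, consider any $i$ with $\PP(Z \in \cA_i \cap \cB) > 0$. Since $\cA_i \cap \cB \in \mathfrak{F}_{a}^{d-1}$ is a subset of $\cB$, the hypothesis gives $|\PP(Z \in \cA^{\prime} \mid Z \in \cA_i \cap \cB) - \PP(Z \in \cA^{\prime})| \leq \varepsilon$ for every $\cA^{\prime} \in \mathfrak{F}_{-b}^{d-1}$. I would then invoke the elementary measure-theoretic fact that whenever two probability measures $\mu,\nu$ on a common measurable space satisfy $\sup_{A}|\mu(A)-\nu(A)| \leq \varepsilon$, one has $\sum_{k}|\mu(B_k)-\nu(B_k)| \leq 2\varepsilon$ for every finite measurable partition $(B_k)$; the factor $2$ arises from separating the indices into those with $\mu(B_k) \geq \nu(B_k)$ and the complement, regrouping into two measurable sets and using the supremum bound on each. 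Applying this to the conditional measure $\mu(\cA^{\prime}) := \PP(Z \in \cA^{\prime} \mid Z \in \cA_i \cap \cB)$ and the unconditional measure $\nu(\cA^{\prime}) := \PP(Z \in \cA^{\prime})$ on $\mathfrak{F}_{-b}^{d-1}$, then multiplying through by $\PP(Z \in \cA_i \cap \cB)$, yields
\[
\sum_{j=1}^{J}\bigl|\PP(\{Z\in \cA_i \cap \cB\}\cap\{Z\in\cA^{\prime}_j\}) - \PP(Z\in\cA_i \cap \cB)\PP(Z\in\cA^{\prime}_j)\bigr| \leq 2\varepsilon\,\PP(Z\in\cA_i \cap \cB).
\]
Summing over $i$ and pulling out the prefactor $\frac{1}{2}$ from the definition of $\mathscr{B}(a,b)$ contributes exactly $\varepsilon\,\PP(Z \in \cB)$ to the final bound.

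For the exceptional contribution no hypothesis is needed: the bound $|\alpha-\beta| \leq \alpha + \beta$ for $\alpha,\beta \geq 0$, combined with the fact that $\{\cA^{\prime}_j\}_{j}$ partitions $\cF^{d-1}$, gives $\sum_j |\,\cdots\,| \leq 2\PP(Z \in \cA_i \cap \cB^c)$ for each $i$; summing over $i$ and halving produces the term $\PP(Z \notin \cB)$. Adding the two contributions, and taking the supremum over all admissible partitions, yields the announced estimate. There is no substantial obstacle here—the lemma is essentially a quantitative rephrasing of the definition of $\mathscr{B}(a,b)$ in terms of conditional probabilities, with $\cB$ serving as an ``almost sure'' event up to $\PP(Z \notin \cB)$; the only nontrivial ingredient is the standard factor-two passage from a uniform bound on signed differences to a sum over a partition.
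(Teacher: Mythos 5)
Your proof is correct and complete. Note that the paper does not prove this lemma at all---it is quoted verbatim from \cite[Lemma 1]{MN15}---and your argument (splitting each $\cA_i$ along $\cB$, using the standard factor-two passage from the total-variation-type bound $\sup_{\cA'}|\mu(\cA')-\nu(\cA')|\leq\varepsilon$ to the sum over a partition, and bounding the $\cB^c$ pieces crudely) is exactly the standard proof of that cited result, so nothing further is needed.
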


\section{Fine estimates for the radius of stabilization}\label{sec:Stabilization}

In this section we establish a kind of strong localization property for $\beta$-, $\beta^{\prime}$- and Gaussian-Delaunay tessellations in the spirit of the geometric limit theory of stabilization for which we refer to the survey articles \cite{SchreiberSurvey,YukichSurvey}. This result is the technical heart of the present paper and is the crucial ingredient when we establish our bounds for the absolute regularity coefficients in Section \ref{sec:AbsoluteRegularity}. We remark that the following two lemmas were are already content of part II of this series of papers (see Lemma 4.4 and Lemma 4.5 in \cite{GKT2}), but we need them here in a much more precise and refined form. The notation follows that of part II of our paper.

\begin{lemma}\label{lm:4_3}
The following assertions hold.
	\begin{enumerate}
	\item
	\begin{enumerate}
	\item For any $\beta > -1$, $A>0$ and $T>0$ we have
	$$
	\PP\Big(\sup\limits_{\substack{(v,h)\in\partial \Psi(\eta_{\beta}), v\in B_A}} h > T\Big)\leq
	\begin{cases}
	\exp\big(-{c_{d,\beta}\kappa_{d-1}\over \beta+1}A^{d-1}(T-4A^2)^{\beta+1}\big) &:T>4A^2\\
	1 &: T\leq 4A^2;
	\end{cases}
	$$
	\item for any $\beta > (d+1)/2$, $A>0$ and $T>0$ we have
    $$
	\PP\Big(\sup\limits_{\substack{(v,h)\in\partial \Psi(\eta^{\prime}_{\beta}), v\in B_A}} h > T\Big)\leq
	\begin{cases}
	\exp\big(-{c_{d,\beta}^{\prime}\kappa_{d-1}\over \beta-1}A^{d-1}(4A^2-T)^{1-\beta}\big) &: T< 0\\
	0 &: T\ge 0;
	\end{cases}
	$$
	\item for any $A>0$ and $T\in\RR$ we have
	$$
	\PP\Big(\sup\limits_{\substack{(v,h)\in\partial \Psi(\zeta),v\in B_A}} h > T\Big)<\exp\Big(-2\cdot(2\pi)^{-d/2}\kappa_{d-1}A^{d-1}e^{T/2-2A^2}\Big).
	$$
	\end{enumerate}
	\item
	\begin{enumerate}
	\item For any $\beta >-1 $ and $A>0$ we have
	$$
	\PP\Big(\inf\limits_{\substack{(v,h)\in\partial \Psi(\eta_{\beta}),v\in B_A}} h < t\Big)<\begin{cases}
	1-\exp\Big(-{\Gamma({d\over 2}+\beta+1)\over \Gamma({d+1\over 2})}t^{\beta+1}(A+\sqrt{t})^{d-1}\Big) &: t>0\\
	0 &:t\leq 0;
	\end{cases}
	$$
	\item for any $\beta> (d+1)/2$ and $A>0$ we have
	$$
	\PP\Big(\inf\limits_{\substack{(v,h)\in\partial \Psi(\eta_{\beta}^{\prime}),v\in B_A}} h < t\Big)<\begin{cases}
	1-\exp\Big(-{\max(\Gamma(\beta-{d+1\over 2}), \Gamma(\beta-1))\over \Gamma(\beta-{d\over 2})}\\ \qquad\qquad\qquad\qquad\times|t|^{1-\beta}(A+\sqrt{|t|})^{d-1}\Big) &: t<0\\
	1 &:t\ge 0;
	\end{cases}
	$$
	\item for any $A>0$ and $t\in\RR$ we have
	$$
	\PP\Big(\inf\limits_{\substack{(v,h)\in\partial \Psi(\zeta),v\in B_A}} h < t\Big)<1-\exp(-2\pi^{-1/2}(A+1)^{d-1}e^{t/2}).
	$$
	\end{enumerate}
	\end{enumerate}
\end{lemma}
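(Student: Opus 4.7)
The plan is to identify $\partial\Psi(\xi)$ with the graph of the height function
\[
H(v) := \inf\bigl\{h_0+\|v-v_0\|^2 : (v_0,h_0)\in\xi\bigr\},
\]
which is almost surely well-defined and finite everywhere in all three cases, and then to reduce both halves of the lemma to Poisson void and hitting probabilities.

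For Part~1, if $\sup_{v\in B_A}H(v)>T$ is witnessed by some $v^*\in B_A$, then $\xi$ must be disjoint from the cylinder $B_A\times[h_{\min},T-4A^2]$, since any $(v_0,h_0)$ inside would give $h_0+\|v^*-v_0\|^2\leq(T-4A^2)+4A^2=T$, contradicting $H(v^*)>T$. Here $h_{\min}$ equals $0$, $-\infty$, $-\infty$ in cases (a), (b), (c), so the cylinder is non-degenerate precisely when $T>4A^2$, $T<0$, or any $T\in\RR$ respectively. The Poisson void formula then gives
\[
\PP(\sup H > T)\leq \exp\Bigl(-\kappa_{d-1}A^{d-1}\int_{h_{\min}}^{T-4A^2}\text{density}(h)\,\dint h\Bigr),
\]
and the three height integrals evaluate in closed form to the stated Gamma/exponential constants. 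The corner case~(b) with $T\geq 0$ is handled separately: since $\int_{-\varepsilon}^0(-h)^{-\beta}\dint h=\infty$ for $\beta>1$, the process $\eta_\beta'$ almost surely accumulates points near every $v$ with arbitrarily small $|h_0|$, so $H\leq 0$ everywhere almost surely and $\PP(\sup>T)=0$.

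For Part~2, the event $\{\inf_{v\in B_A}H(v)<t\}$ coincides with $\{\xi\cap R_t\neq\varnothing\}$, where
\[
R_t := \bigl\{(v_0,h_0):h_0<t,\ \|v_0\|<A+\sqrt{t-h_0}\,\bigr\},
\]
since $\min_{v^*\in B_A}\|v^*-v_0\|^2=(\|v_0\|-A)_+^2$ and $h_0+(\|v_0\|-A)_+^2<t$ rearranges to $\|v_0\|<A+\sqrt{t-h_0}$. Hence $\PP(\inf H<t)\leq 1-\exp(-\mu(R_t))$ with $\mu(R_t)=\kappa_{d-1}\int\text{density}(h_0)(A+\sqrt{t-h_0})^{d-1}\,\dint h_0$. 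In case~(a) (nontrivial only for $t>0$) I dominate $(A+\sqrt{t-h_0})^{d-1}\leq(A+\sqrt t)^{d-1}$ and evaluate $\int_0^t h_0^\beta\,\dint h_0=t^{\beta+1}/(\beta+1)$; comparing the resulting prefactor to the claimed one reduces to $\pi^{1/2}\Gamma(\beta+2)\geq 1$, which holds on $\beta>-1$ because $\Gamma\geq 0.886$ on $[1,\infty)$. Case~(c) is analogous: after $u=t-h_0$ the integral factors as $e^{t/2}\int_0^\infty e^{-u/2}(A+\sqrt u)^{d-1}\,\dint u$, and splitting at $u=1$ with $(A+\sqrt u)^{d-1}\leq(A+1)^{d-1}\max(1,u^{(d-1)/2})$ absorbs the $d$-dependent constant into the prefactor $2\pi^{-1/2}$.

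The main obstacle is case~(b), where the spatial factor $(A+\sqrt{t-h_0})^{d-1}$ cannot be controlled uniformly because it grows like $|h_0|^{(d-1)/2}$ as $h_0\to-\infty$ and must be balanced against the polynomial decay $(-h_0)^{-\beta}$ of the intensity. The substitution $h_0=-|t|(\tau+1)$ rewrites $\mu(R_t)$ as
\[
c'_{d,\beta}\kappa_{d-1}|t|^{1-\beta}\int_0^\infty(\tau+1)^{-\beta}(A+\sqrt{|t|\tau})^{d-1}\dint\tau,
\]
and using $(A+\sqrt{|t|\tau})^{d-1}\leq(A+\sqrt{|t|})^{d-1}\max(1,\tau^{(d-1)/2})$ (which holds because $A\leq\sqrt\tau\cdot A$ when $\tau\geq 1$) I would split at $\tau=1$: the piece $\int_0^1(\tau+1)^{-\beta}\dint\tau\leq 1/(\beta-1)=\Gamma(\beta-1)/\Gamma(\beta)$ produces the $\Gamma(\beta-1)$ contribution, while $\int_1^\infty(\tau+1)^{-\beta}\tau^{(d-1)/2}\dint\tau\leq B((d+1)/2,\beta-(d+1)/2)=\Gamma((d+1)/2)\Gamma(\beta-(d+1)/2)/\Gamma(\beta)$ produces the $\Gamma(\beta-(d+1)/2)$ contribution, the convergence of the latter being precisely why $\beta>(d+1)/2$ is assumed. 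Multiplying by $c'_{d,\beta}\kappa_{d-1}/\Gamma(\beta)=\pi^{-1/2}/[\Gamma((d+1)/2)\Gamma(\beta-d/2)]$ cancels $\Gamma((d+1)/2)$ in the second term, and a slightly sharper choice of splitting point (or careful constant tracking) bounds the sum of the two contributions by the stated $\max(\Gamma(\beta-(d+1)/2),\Gamma(\beta-1))/\Gamma(\beta-d/2)$ expression. This is the technical heart of the lemma: it is exactly the interplay of the two tail regimes that forces the sharp assumption $\beta>(d+1)/2$.
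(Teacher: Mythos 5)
Your overall strategy coincides with the paper's: identify $\partial\Psi(\xi)$ with the graph of the parabolic height function, reduce Part~1 to the void probability of the cylinder $B_A\times(h_{\min},T-4A^2]$ and Part~2 to the hitting probability of the region $R_t$ (which is exactly the paper's $K(A,t)$), and then evaluate the intensity integrals. Parts 1(a)--(c), 2(a) and 2(c) go through as you describe (your treatment of the corner case $T\ge0$ in 1(b) via accumulation of points near height $0$ is a point the paper glosses over), and in 2(a) your cruder domination $(A+\sqrt{t-h})^{d-1}\le(A+\sqrt t)^{d-1}$ is indeed rescued by the spare factor $1/(\sqrt\pi\,\Gamma(\beta+2))\le 1$.

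The gap is in 2(b), exactly where you flag it, and your proposed remedy does not close it. Your splitting at $\tau=1$, with the bounds $\int_0^1(\tau+1)^{-\beta}\dint\tau\le\Gamma(\beta-1)/\Gamma(\beta)$ and $\int_1^\infty(\tau+1)^{-\beta}\tau^{(d-1)/2}\dint\tau\le B(\tfrac{d+1}{2},\beta-\tfrac{d+1}{2})$, yields the constant $\frac{1}{\sqrt\pi}\bigl[\frac{\Gamma(\beta-1)}{\Gamma((d+1)/2)}+\Gamma(\beta-\tfrac{d+1}{2})\bigr]$ in place of $M:=\max\bigl(\Gamma(\beta-\tfrac{d+1}{2}),\Gamma(\beta-1)\bigr)$, and this can exceed $M$: for $d=2$, $\beta=3$ it equals $\frac{1}{\sqrt\pi}\bigl(\Gamma(3/2)^{-1}+\Gamma(3/2)\bigr)\approx1.14$ while $M=\max(\Gamma(3/2),\Gamma(2))=1$. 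Moreover no "sharper choice of splitting point" helps, because any bound of the additive form $\int(\text{sup over }A\text{ on }[0,\tau_0])+\int(\text{sup over }A\text{ on }[\tau_0,\infty))$ pays for the fact that the two suprema of $\bigl(\frac{A+\sqrt{|t|\tau}}{A+\sqrt{|t|}}\bigr)^{d-1}$ are attained at opposite ends ($A\to\infty$ for small $\tau$, $A=0$ for large $\tau$). The paper's proof avoids this by expanding $(A+\sqrt{t-h})^{d-1}=\sum_{i}\binom{d-1}{i}A^{i}(t-h)^{(d-1-i)/2}$ \emph{before} integrating, evaluating each of the $d$ integrals exactly as a Beta function, and then comparing the $i$-th coefficient $\frac{1}{\sqrt\pi}\frac{\Gamma(\frac{d+1-i}{2})}{\Gamma(\frac{d+1}{2})}\frac{\Gamma(\beta-\frac{d+1-i}{2})}{\Gamma(\beta-\frac{d}{2})}$ termwise against $M/\Gamma(\beta-\tfrac d2)$, using $\Gamma(\tfrac{d+1-i}{2})\le\sqrt\pi\,\Gamma(\tfrac{d+1}{2})$ (via $\min_{x>0}\Gamma(x)\approx0.885$) and the convexity of $\Gamma$ to bound $\Gamma(\beta-\tfrac{d+1-i}{2})$ by the maximum of its two extreme values. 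This termwise comparison of the binomial expansion against $(A+\sqrt{|t|})^{d-1}=\sum_i\binom{d-1}{i}A^i|t|^{(d-1-i)/2}$ is the missing ingredient; it is also the device the paper uses in 2(a), where your shortcut happens to survive.
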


\begin{proof}[Proof of Lemma \ref{lm:4_3}]
The proof is basically the same as the proof of Lemma 4.3 in part II \cite{GKT2}, since up to certain point it works for an arbitrary Poisson point process $\xi$ in the product space $\RR^{d-1}\times\RR$. In particular from \cite[Equation (4.3)]{GKT2}, we obtain
$$
	\begin{aligned}
	\PP\Big(\sup\limits_{\substack{(v,h)\in\partial \Psi(\xi),v\in B_A)}} h > T\Big)
	&\leq\exp(-\EE[\xi\cap (B_A\times (-\infty, T-4A^2])]).
	\end{aligned}
$$	
	It remains to evaluate the last term for the three Poisson point processes $\xi=\eta_{\beta}$, $\xi=\eta^{\prime}_{\beta}$ and $\xi=\zeta$. We concentrate on the computation in the first two cases, since the last one was already carried out in part II \cite[Lemma 4.4(1c, 2c)]{GKT2}.
	
	From the definition of the intensity measure of the Poisson point processes $\eta_{\beta}$ for $T>4A^2$ we get
	\begin{align*}
	 \EE[\eta_{\beta}\cap (B_A\times (-\infty, T-4A^2])]&= c_{d,\beta}\int_{B_A}\int_{0}^{T-4A^2}h^{\beta}\,\dint h\dint v
	 = {c_{d,\beta}\kappa_{d-1}\over \beta+1}A^{d-1}(T-4A^2)^{\beta+1},
	\end{align*}
	otherwise $\EE[\eta_{\beta}\cap (B_A\times (-\infty, T-4A^2])]=0$. Analogously in case of $\eta_{\beta}^{\prime}$ we obtain for $T< 0$,
	\begin{align*}
	 \EE[\eta^{\prime}_{\beta}\cap (B_A\times (-\infty, T-4A^2])]&= c_{d,\beta}^{\prime}\int_{B_A}\int_{4A^2-T}^{\infty}h^{-\beta}\,\dint h\dint v
	 ={c_{d,\beta}^{\prime}\kappa_{d-1}\over \beta-1}A^{d-1}(4A^2-T)^{1-\beta}.
	\end{align*}

For the second part of the lemma we use the equality
$$
\PP\Big(\inf\limits_{\substack{(v,h)\in\partial \Psi(\xi),v\in B_A}} h < t\Big)=1-\exp(-\EE[\xi\cap K(A,t)]),
$$
where
\begin{align*}
K(A,t)&:=(B_A\times (-\infty,t])\cup \Big(\bigcup\limits_{w\in\partial B_A}\Pi_{-,(w,t)}^{\downarrow}\Big)\\
&=\{(v,h)\in\RR^{d-1}\times (-\infty,t]\colon v\in B_{A+\sqrt{t-h}}\},
\end{align*}
see \cite[Equation (4.6)]{GKT2}.
Consider $\xi=\eta_{\beta}$. For $t>0$ we have
\begin{align*}
	 \EE[\eta_{\beta}\cap K(A,t)]&= c_{d,\beta}\int_{0}^{t}\int_{\{v\in\RR^{d-1}:\|v\|\leq \sqrt{t-h}+A\}}h^{\beta}\,\dint v\dint \\
	 &=c_{d,\beta}\kappa_{d-1}\int_{0}^{t}(\sqrt{t-h}+A)^{d-1}h^{\beta}\,\dint h\\
	 &=c_{d,\beta}\kappa_{d-1}\sum\limits_{i=0}^{d-1}{d-1\choose i}A^{i}\int_{0}^{t}(t-h)^{(d-1-i)/2}h^{\beta}\,\dint h.
\end{align*}
Since for any integer $0\leq i\leq d-1$ it holds that
\begin{align*}
\int_{0}^{t}(t-h)^{(d-1-i)/2}h^{\beta}\,\dint h&=t^{{d+1-i\over 2}+\beta}\int_{0}^{1}(1-s)^{(d-1-i)/2}s^{\beta}\,\dint s=t^{{d+1-i\over 2}+\beta}{\Gamma({d+1-i\over 2})\Gamma(\beta+1)\over \Gamma({d+1-i\over 2}+\beta+1)},
\end{align*}
and the minimum of $\Gamma(x)$ for $x>0$ is around $0.88$ we conclude that
\begin{align}
	 \EE[\eta_{\beta}\cap K(A,t)]&={\Gamma({d\over 2}+\beta+1)\over \sqrt{\pi}\Gamma({d+1\over 2})}\sum\limits_{i=0}^{d-1}{d-1\choose i}A^{i}t^{{d+1-i\over 2}+\beta}{\Gamma({d+1-i\over 2})\over \Gamma({d+1-i\over 2}+\beta+1)} \label{eq:12.07.21_1}\\
	 &<{\Gamma({d\over 2}+\beta+1)\over \Gamma({d+1\over 2})}t^{\beta+1}(A+\sqrt{t})^{d-1}.\notag
	 \end{align}
	
	 In the same way we deal with the case $\xi=\eta^{\prime}_{\beta}$. For $t<0$ we have
\begin{align*}
	 \EE[\eta^{\prime}_{\beta}\cap K(A,t)] &=c_{d,\beta}^{\prime}\kappa_{d-1}\sum\limits_{i=0}^{d-1}{d-1\choose i}A^{i}\int_{|t|}^{\infty}(h-|t|)^{(d-1-i)/2}h^{-\beta}\,\dint h.
\end{align*}
Since for any integer $0\leq i\leq d-1$ it holds that
\begin{align*}
\int_{|t|}^{\infty}(h-|t|)^{(d-1-i)/2}h^{-\beta}\,\dint h&=|t|^{{d+1-i\over 2}-\beta}\int_{0}^{1}(1-s)^{(d-1-i)/2}s^{\beta-{d+1-i\over 2}-1}\,\dint s\\
&=|t|^{{d+1-i\over 2}-\beta}{\Gamma({d+1-i\over 2})\Gamma(\beta-{d+1-i\over 2})\over \Gamma(\beta)},
\end{align*}
we conclude that
\begin{align*}
	 \EE[\eta^{\prime}_{\beta}\cap K(A,t)]&={1\over \sqrt{\pi}}\sum\limits_{i=0}^{d-1}{d-1\choose i}A^{i}|t|^{{d+1-i\over 2}-\beta}{\Gamma({d+1-i\over 2})\over \Gamma({d+1\over 2})}{\Gamma(\beta-{d+1-i\over 2})\over \Gamma(\beta-{d\over 2})}\\
	 &\leq {\max(\Gamma(\beta-{d+1\over 2}), \Gamma(\beta-1))\over \Gamma(\beta-{d\over 2})}|t|^{1-\beta}(A+\sqrt{|t|})^{d-1}.
	 \end{align*}
	This completes the proof of the lemma.
\end{proof}

In what follows we consider the random closed sets
$$
\widetilde{\cD}(\eta_{\beta})=\widetilde{\cD}_{\beta}:=\skel(\cL^*(\eta_{\beta})),\qquad \widetilde{\cD}(\eta^{\prime}_{\beta})=\widetilde{\cD}^{\prime}_{\beta}:=\skel(\cL^*(\eta^{\prime}_{\beta})),\qquad\widetilde{\cD}(\zeta)=\widetilde{\cD}:=\skel(\cL^*(\zeta))
$$
in $\RR^{d-1}$, where we use the same notation as in part II of our paper. The next lemma is a substantial refinement of \cite[Lemma 4.5]{GKT2}. Such a  quantitative version is required in order to give sharp bounds on the absolute regularity coefficients in the next section. We recall that for $R\geq 1$ and $r>0$ we say that $\widetilde{\cD}_{\beta}\cap B_R$ \textit{is determined by particles $(v,h)\in\partial \Psi(\eta_{\beta})$ with $v\in B_{R+r}$}, provided that the $\beta$-Delaunay tessellation within $B_R$ is unaffected by changes of the point configuration $\eta_{\beta}$ outside of $B_{R+r}\times \RR$. The same terminology is also applied if $\eta_{\beta}$ is replaced by one of the point processes $\eta^{\prime}_\beta$ or $\zeta$.

\begin{lemma}\label{lm:4_4}
\begin{enumerate}
\item For any $R>0$, $r\ge 1/3$, $\beta>-1$, $d\ge 2$ and for any constant $c_0>0$ we have
\begin{align*}
\PP(\widetilde{\cD}_{\beta}\cap B_R &\text{ is not determined by particles } (v,h)\in\partial \Psi(\zeta_{\beta})\text{ with } v\in B_{R+r})\\
&< \begin{cases}
c_2re^{-c_1r^{d+1+2\beta}}\quad &:\quad r\ge c_0R,\\
\tilde c_2 R^{d-2}r^{-d+3}e^{-\tilde c_1r^{d+1+2\beta}}\quad &:\quad r< c_0R,
\end{cases}
\end{align*}
for some positive constants $c_1,\tilde c_1, c_2, \tilde c_2$, depending on $d$, $\beta$ and $c_0$ only.
\item For any $R>0$, $r>0$, $d\ge 2$ and $\beta >(d+1)/2$, and for any constant $c_0>0$ we have
\begin{align*}
\PP(\widetilde{\cD}^{\prime}_{\beta}\cap B_R &\text{ is not  determined by particles } (v,h)\in\partial \Psi(\zeta_{\beta}^{\prime})\text{ with } v\in B_{R+r})\\
&< \begin{cases}
c_3r^{-(2\beta-d-1)}\quad &:\quad r\ge c_0R,\\
\tilde c_3 R^{d-2}r^{-2\beta+3}\quad &:\quad r< c_0R,
\end{cases}
\end{align*}
for some positive constants $c_3,\tilde c_3$, depending on $d$, $\beta$ and $c_0$ only.
\item For any $R>0$, $r\ge 1/3$, and $d\ge 2$, and for any constant $c_0>0$ we have
\begin{align*}
\PP(\widetilde{\cD}\cap B_R &\text{ is not determined by particles } (v,h)\in\partial \Psi(\zeta)\text{ with }v\in B_{R+r})\\
&< \begin{cases}
c_5e^{-c_4r^2}\quad &:\quad r\ge c_0R,\\
\tilde c_5 R^{d-2}r^{-d+2}e^{-\tilde c_4r^2}\quad &:\quad r< c_0R,
\end{cases}
\end{align*}
for some positive constants $c_4,\tilde c_4, c_5, \tilde c_5$, depending on $d$ and $c_0$ only.
\end{enumerate}
\end{lemma}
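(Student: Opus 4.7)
The plan is to refine the argument of Lemma 4.5 in \cite{GKT2}. The qualitative geometric reduction is essentially the same: the tessellation $\widetilde{\cD}(\xi) \cap B_R$ is determined by particles with $v \in B_{R+r}$ provided that the height of the paraboloid growth process $\partial\Psi(\xi)$ is uniformly bounded above (by some $T$) and below (by some $t$) over a sufficiently thick neighborhood of $B_R$, with $T - t$ of order $r^2$. Indeed, a Delaunay simplex $\conv(v_1, \ldots, v_d)$ intersecting $B_R$ corresponds to a Laguerre vertex $p$ with apex height $h_p$, and its vertices $(v_i, h_i)$ satisfy $\|v_i - v_j\| \le \sqrt{h_p - h_i} + \sqrt{h_p - h_j} \le 2\sqrt{T-t}$; picking $w \in \conv(v_1,\ldots,v_d) \cap B_R$, each $\|v_i - w\| \le \diam(\conv(v_1, \ldots, v_d)) \le 2\sqrt{T - t}$, which is $\le r$ as soon as $T - t \le r^2/4$. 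What requires refinement here, compared to \cite{GKT2}, are the quantitative tail estimates on these height bounds.

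The proof then reduces to estimating
$$
\PP\Big(\sup_{\substack{(v,h) \in \partial\Psi(\xi)\\ v \in B_{R+r}}} h > T\Big) + \PP\Big(\inf_{\substack{(v,h) \in \partial\Psi(\xi)\\ v \in B_{R+r}}} h < t\Big)
$$
via Lemma \ref{lm:4_3}. For $r \ge c_0 R$ the neighborhood $B_{R+r}$ has radius comparable to $r$, and direct application of Lemma \ref{lm:4_3} with $A \sim r$, $T \sim r^2$, and $t$ chosen according to the regime yields the stated rates: $\exp(-c_1 r^{d+1+2\beta})$ in the $\beta$-case, $r^{-(2\beta-d-1)}$ in the $\beta'$-case, and $\exp(-c_4 r^2)$ in the Gaussian case. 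For $r < c_0 R$, direct application would introduce an intolerable factor of $R^{d-1}$ in the exponent; one therefore covers $B_{R+r}$ by $N$ balls of radius $\rho \sim r$, the cover concentrated in an annulus of thickness $\sim r$ around $\partial B_R$ where the failure event effectively lives, applies Lemma \ref{lm:4_3} on each ball with $T - 4\rho^2 \sim r^2$, and takes a union bound. The number of balls is $N \sim R^{d-2}/r^{d-2}$, producing the prefactor $R^{d-2} r^{-d+3}$ after accounting for an additional factor of $r$ from the inf-event contribution.

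The main obstacle is the precise interplay between the sup- and inf-height failure events and the cover. In particular, in the $\beta$-case one must verify that the per-ball parameter choice $T \sim r^2$ is compatible with the constraint $T > 4\rho^2$ (forcing $\rho$ to be a small enough constant multiple of $r$), while ensuring that the inf-height bound applied with the corresponding $t$ yields a failure probability that merges with the sup-height bound without spoiling the stated prefactors. Similar bookkeeping applies in the $\beta'$- and Gaussian cases, where the intrinsic polynomial or exponential rates from Lemma \ref{lm:4_3} must be carefully tracked through the union bound. Writing out the details for each of the three Poisson processes requires separate computations, but all three follow the same scheme.
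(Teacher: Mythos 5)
Your high-level plan shares ingredients with the paper's proof (reduction to the tail bounds of Lemma \ref{lm:4_3}, a covering by regions of size $\sim r$, a union bound, and concentration of the count on an annulus to get the $R^{d-2}$ prefactor), but the central geometric reduction you propose is not justified and, as stated, is circular. You bound the diameter of a bad simplex by $2\sqrt{h_p-t}\le 2\sqrt{T-t}$, which requires both that the apex height satisfies $h_p\le T$ and that the vertex heights satisfy $h_i\ge t$. The apex lies on $\partial\Psi(\xi)$ and the vertices are points of $\xi$, so these bounds follow from the sup/inf control \emph{only if you already know that the apex and the vertices have spatial coordinates inside the controlled region} --- which is exactly what is to be proved. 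A bad facet can a priori have its apex far outside $B_{R+r}$ (or deep inside $B_R$) at a very large height $h'$, and its vertices far away at heights well below $t$ (the latter is automatic only for $\eta_\beta$, where $h\ge 0$), without the restriction of $\partial\Psi$ to $B_{R+r}$ violating either threshold. Ruling this out needs a genuinely additional argument --- e.g.\ that a point of $\xi$ witnessing $\partial\Psi(u)\le T$ at a suitable $u$ would lie in the interior of the empty circumscribed paraboloid --- and you have deferred precisely this ("the interplay between the sup- and inf-height failure events") as an unresolved "obstacle". That interplay is the heart of the lemma, not bookkeeping.

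The paper's proof is organised differently and avoids any global sup threshold. It conditions only on $\inf_{(v,h)\in\partial\Psi(\xi),\,v\in\cl(B_{R+r}\setminus B_R)}h>t$, shows that a bad facet must then have its apex $(v',h')$ in the set $K_1(R,t)\cap K_2(R+r,t)$ (so $h'\ge t+r^2/4$, but $h'$ may be arbitrarily large, with the admissible spatial region $B_{R+\sqrt{h'-t}}\setminus B_{R+r-\sqrt{h'-t}}$ growing with $h'$), and then runs a union bound over a partition of this \emph{unbounded} region into boxes stratified by height, balancing the box count $m(y)$ against the decaying per-box probability $p_{a,b}(\xi,y)$ and summing via incomplete-gamma estimates. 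This stratification is what makes the estimate work uniformly in $R$ and $r$; a single threshold $T$ also collides with the constraint $T>4A^2$ in Lemma \ref{lm:4_3}(1) (so your "direct application with $A\sim r$, $T\sim r^2$" in the regime $r\ge c_0R$ is internally inconsistent unless you already cover by balls of radius a small multiple of $r$), and in the $\zeta$- and $\eta_\beta'$-cases the admissible inf threshold $t$ over the annulus is itself of order $-r^2$ up to logarithms, leaving essentially no room between $t+r^2/4$ and the smallest $T$ for which the per-ball sup bound is nontrivial. Your proposal correctly identifies where the difficulty sits, but does not contain the idea (localisation of apexes to $K_1\cap K_2$ plus height stratification, or an equivalent emptiness argument) that resolves it.
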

\begin{proof}
In what follows, let us write $\xi$ for one of the Poisson point processes $\zeta$, $\eta_{\beta}$ or $\eta^{\prime}_{\beta}$.
 Throughout the proof $C$ will denote a positive constant, which only depends on $d$, $\beta$ and $c_0$.  Its exact value might be different from case to case. The beginning of the proof coincides with the proof of Lemma 4.5 in \cite{GKT2}. For convenience we repeat the arguments here.

\begin{figure}[t]
\begin{center}
	\includegraphics[width=0.25\columnwidth]{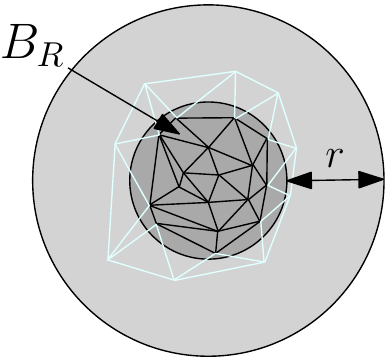}
\end{center}
\caption{Illustration of the event $\cE(\xi)$. Indicated is the random closed set $\widetilde\cD(\xi)$ inside the ball $B_R$, which under $\cE(\xi)$ is determined by particles of $\partial\Psi(\xi)$ with spatial coordinate in $B_{R+r}$.}
\label{fig:StabInside}
\end{figure}

For $R>0$ and $r>0$ we consider the event
$$
\cE(\xi):=\{\widetilde{\cD}(\xi)\cap B_R \text{ is not determined by particles } (v,h)\in\partial \Psi(\xi)\text{ with }v\in B_{R+r}\},
$$
see Figure \ref{fig:StabInside}. By the law of total probability we have that
\begin{equation}\label{eq_19.10_1}
\PP(\cE(\xi))\leq P_1(\xi)+P_2(\xi),
\end{equation}
where
\begin{align*}
P_1(\xi)&:= \PP\big(\cE(\xi) \, \big| \, \inf\limits_{\substack{(v,h)\in\partial \Psi(\xi),v\in \cl(B_{R+r}\setminus B_R)}} h > t\big),\\
P_2(\xi)&:=\PP\big(\inf\limits_{\substack{(v,h)\in\partial \Psi(\xi),v\in \cl(B_{R+r}\setminus B_R)}} h < t\big).
\end{align*}
These two terms are now dealt with separately, but we start with some general observations.

According to the construction, $\widetilde{\cD}(\xi)\cap B_R $ coincides almost surely with the skeleton of random tessellation $\cM_{\Phi}$. Then  $\widetilde{\cD}(\xi)\cap B_R $ is determined as soon as we know the location of all vertices of paraboloid facets of $\Phi(\xi)$ hitting the set $B_R\times \RR$. Then the event $\cE(\xi)$ occurs if and only if there is paraboloid facet of $\Phi(\xi)$ hitting the set $B_R\times \RR$ and having a vertex $(v,h)$ whose spatial coordinate satisfies $v\not \in B_{R+r}$. For $(v',h')\in\RR^{d-1}\times\RR$ let $\Pi_{-}(v',h')$ be a paraboloid such that $F(v',h'):=\Pi_{-}(v',h')\cap \Phi(\xi)$ is a paraboloid facet of $\Phi(\xi)$. Assume that $(B_R\times \RR)\cap F(v',h')\neq \varnothing$ and that there is a vertex $(v,h)\in F(v',h')$ with $v\not \in B_{R+r}$. Then $(B_{R+r}\times \RR)\cap F(v',h')\neq \varnothing$. Further, we note that the set $F(v',h')$ is connected, paraboloid convex, $F(v',h')\subset \partial \Phi(\xi)$ and  that
$$
\inf\limits_{\substack{(v,h)\in\partial \Phi(\xi),v\in \cl(B_{R+r}\setminus B_R)}} h\ge \inf\limits_{\substack{(v,h)\in\partial \Psi(\xi),v\in \cl(B_{R+r}\setminus B_R)}} h.
$$
This implies that conditionally on
$$
\inf\limits_{\substack{(v,h)\in\partial \Psi(\xi),v\in \cl(B_{R+r}\setminus B_R)}} h > t,
$$
we have
$$
F(v',h')\cap (\cl(B_{R+r}\setminus B_R)\times \RR)=F(v',h')\cap (\cl(B_{R+r}\setminus B_R)\times (t,\infty))\neq \varnothing,
$$
and, hence, there are two points $v_1\in \RR^{d-1}\setminus B_{R+r}$ and $v_2\in B_R$ such that for any $v_s=sv_1+(1-s)v_2$, $s\in[0,1]$ we have $(\{v_s\}\times [t,\infty))\cap F(v',h')\neq \varnothing$. Then for any $s\in[0,1]$ we have $v_s\in \Pi_{-}(v',h')\cap (\RR^{d-1}\times \{t\})$, which in turn means that $B_{\sqrt{h'-t}}(v')\cap B_R\neq \varnothing$ and $B_{\sqrt{h'-t}}(v')\cap (\RR^{d-1}\setminus B_{R+r})\neq \varnothing$, see Figure \ref{fig:proof_fig1}.

\begin{figure}[t]
	\centering
	\includegraphics[width=0.6\columnwidth]{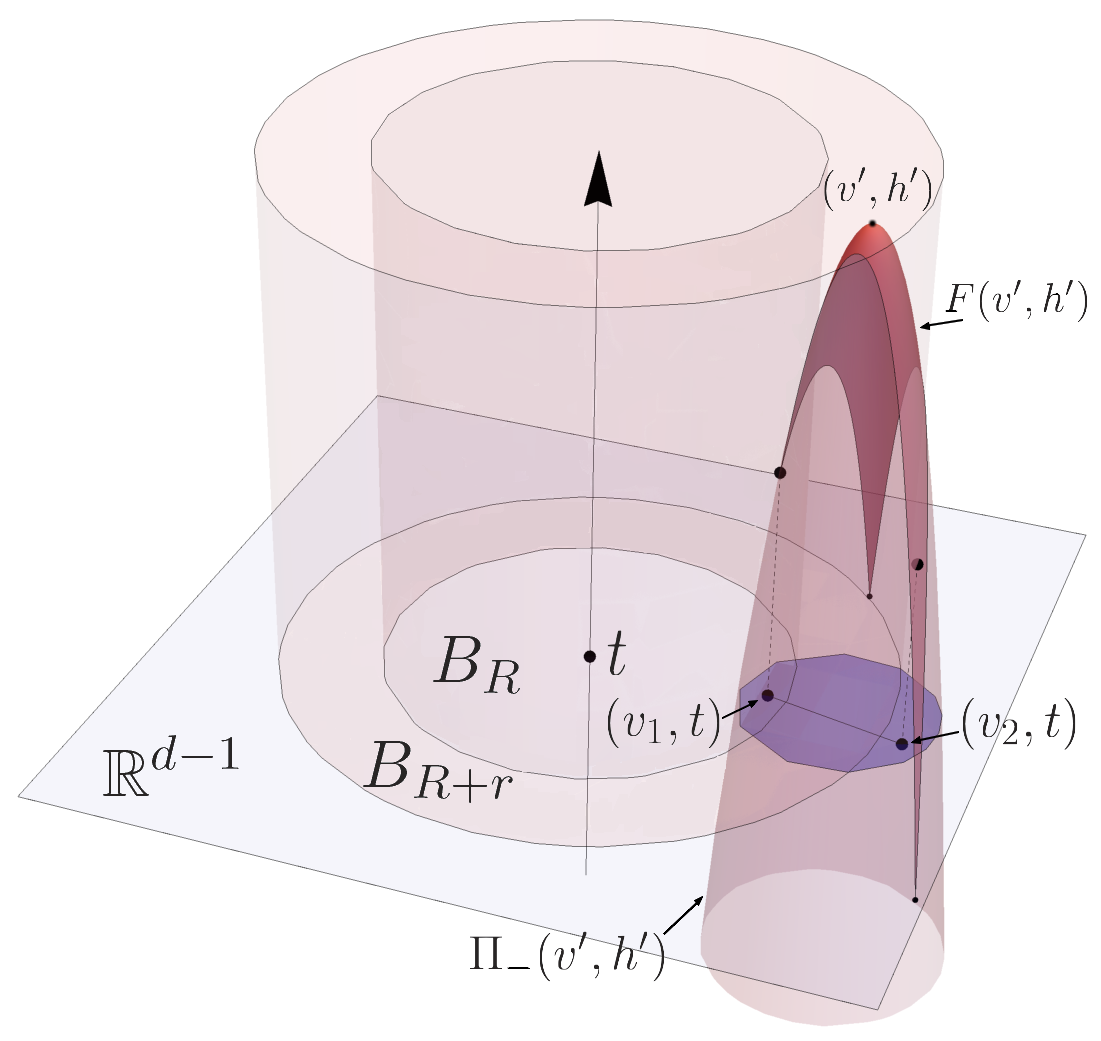}
	\caption{Illustration of the paraboloid facet $F(v',h')$ hitting the set $B_R\times\RR$ and having a vertex $(v,h)$ with $v\not\in B_{R+r}$.}
	\label{fig:proof_fig1}
\end{figure}

From the above argument it follows that
\begin{align*}
P_1(\xi)&\leq \PP\big(\exists\,\text{ a paraboloid facet } F(v',h') \text{ of } \Phi(\xi)\colon \\
&\hspace{1cm}B_{\sqrt{h'-t}}(v')\cap B_R\neq \varnothing, B_{\sqrt{h'-t}}(v')\cap (\RR^{d-1}\setminus B_{R+r})\neq \varnothing\big).
\end{align*}
Next, we observe that the condition $B_{\sqrt{h'-t}}(v')\cap B_R\neq \varnothing$ is equivalent to
$$
(v',h')\in \{(v,h)\in \RR^{d-1}\times [t,\infty)\colon v\in B_{R+\sqrt{h-t}}\}=:K_1(R,t),
$$
and the condition $B_{\sqrt{h'-t}}(v')\cap (\RR^{d-1}\setminus B_{R+r})\neq \varnothing$ is equivalent to
$$
(v',h')\in \{(v,h)\in \RR^{d-1}\times [t,\infty)\colon v\in (\RR^{d-1}\setminus B_{R+r-\sqrt{h-t}})\}=:K_2(R+r,t).
$$
Thus,
\begin{align}
P_1(\xi)&\leq \PP(\exists\,\text{ a paraboloid facet } F(v',h') \text{ of } \Phi(\xi)\colon (v',h')\in K_1(R,t)\cap K_2(R+r,t)).\label{eq_19.10_2}
\end{align}
Proceeding further as in the proof of Lemma 4.5 of the part II \cite{GKT2} of our paper consider a partition of $\RR^{d-1}$ into the boxes $Q_{x,y}$, $x\in a\ZZ^{d-1}$ with $a>0$ and $y\in \ZZ$, of the form
$$
Q_{x,y}:=(x\oplus [0,a]^{d-1})\times [t+r^2/4+by,t+r^2/4+b(y+1)],
$$
where $a>0$ is allowed to depend on $y$, $b>0$ and $\oplus$ denotes Minkowski addition. For fixed $y\in \ZZ$, $y\ge 0$, denote by $Q_{x_1,y},\ldots, Q_{x_{m(y)},y}$ the finite collection of $m(y)>0$ boxes having non-empty intersection with the set
\begin{align*}
K(t,R,r,y)&:=\{(v,h)\in K_1(R,t)\cap K_2(R+r,t)\colon  h \in [t+r^2/4+by,t+r^2/4+b(y+1)]\}\\
&=\{(v,h)\colon  h \in [t+r^2/4+by,t+r^2/4+b(y+1)], v\in (B_{R+\sqrt{h-t}}\setminus B_{R+r-\sqrt{h-t}})\},
\end{align*}
where we use the convention that $B_{M}=\varnothing$ whenever $M\leq 0$. Moreover, for $y <0$ the set $K(t,R,r,y)$ is empty. Further, for $y\geq 0$ and $x\in \RR^{d-1}$ we obtain
\begin{align*}
\PP\Big(\sup\limits_{\substack{(v,h)\in\partial \Psi(\xi),v\in (x\oplus [0,a]^{d-1})}} h > t+r^2/4+by\Big)&=\PP\Big(\sup\limits_{\substack{(v,h)\in\partial \Psi(\xi),v\in [0,a]^{d-1}}} h > t+r^2/4+by\Big)\\
&\leq \PP\Big(\sup\limits_{\substack{(v,h)\in\partial \Psi(\xi),v\in B_{\sqrt{d-1}a}}} h > t+r^2/4+by\Big)\\
&=:p_{a,b}(\xi,y).
\end{align*}
Thus, applying the union bound to \eqref{eq_19.10_2} and using the fact that apexes of the parabolic facets of the paraboloid hull process $\Phi(\xi)$ belong to the boundary of paraboloid growth process $\Psi(\xi)$ we see that
\begin{equation}\label{eq_19.10_3}
\begin{aligned}
P_1(\xi)&< \sum_{y=0}^{\infty}\sum_{j=0}^{m(y)}\PP(\exists\,\text{ a paraboloid facet } F(v',h') \text{ of } \Phi(\xi)\colon (v',h')\in Q_{x_j,y})\\
&< \sum_{y=0}^{\infty}\sum_{j=0}^{m(y)}\PP\Big(\sup\limits_{\substack{(v,h)\in\partial \Psi(\xi),v\in (x_j\oplus [0,a]^{d-1})}} h > t+r^2/4+by\Big)\\
&<\sum_{y=0}^{\infty}m(y)p_{a,b}(\xi,y).
\end{aligned}
\end{equation}
It remains to estimate $m(y)$, which can be rewritten as
\begin{align*}
m(y)
&=\#\{x\in a\ZZ^{d-1}\colon (x\oplus [0,a]^{d-1})\cap (B_{R+\sqrt{r^2/4+b(y+1)}}\setminus B_{R+r-\sqrt{r^2/4+b(y+1)}}) \neq \varnothing\},
\end{align*}
where $\#\{\,\cdot\,\}$ stands for the cardinality of the set in brackets. Since all boxes intersecting $K(t,R,r,y)$ for fixed $y$ are included in the slightly extended set
\begin{align*}
\bigcup\limits_{1\leq i\leq m(y)}Q_{x_i,y}\subset \tilde K(t,R,r,y)&:=[t+r^2/4+by,t+r^2/4+b(y+1)]\\
&\qquad\qquad\times \Big(B_{R+\sqrt{r^2/4+b(y+1)}+\sqrt{d-1}a}\setminus B_{R+r-\sqrt{r^2/4+b(y+1)}-\sqrt{d-1}a}\Big),
\end{align*}
we can conclude that,
$$
m(y)\leq a^{-d+1}\Vol(\tilde K(t,R,r,y)).
$$
We will now distinguish the cases when $r \ge c_0R$ and $r< c_0R$ for some positive constant $c_0$. In the first case we use the simple bound
\begin{equation}\label{eq_19.10_4}
m(y)\leq \kappa_{d-1}a^{-d+1}\Big(R+\sqrt{r^2/4+b(y+1)}+\sqrt{d-1}a\Big)^{d-1}.
\end{equation}
In the second case we note that for
$$
b(y+1)\ge \big(R+r-\sqrt{d-1}a\big)^2-r^2/4
$$
we have $B_{R+r-\sqrt{r^2/4+b(y+1)}-\sqrt{d-1}a}=\varnothing$ and
\begin{equation}\label{eq_19.10_5}
\Vol(\tilde K(t,R,r,y))=\kappa_{d-1}\Big(R+\sqrt{r^2/4+b(y+1)}+\sqrt{d-1}a\Big)^{d-1}.
\end{equation}
Otherwise, remembering that $c_0R>r$ we obtain
\begin{equation}\label{eq_19.10_6}
\begin{aligned}
\Vol(&\tilde K(t,R,r,y))\\
&=\kappa_{d-1}\sum_{i=0}^{d-1}{d-1\choose i}\Big(\sqrt{r^2/4+b(y+1)}+\sqrt{d-1}a\Big)^{d-1-i}\big(R^i-(-1)^{d-1-i}(R+r)^i\big)\\
&\ll \sum_{i=0}^{d-2}{d-2\choose i}\Big(\sqrt{r^2/4+b(y+1)}+\sqrt{d-1}a\Big)^{d-1-i}(R^i+(R+r)^i)\\\
&\ll\Big(\sqrt{r^2/4+b(y+1)}+\sqrt{d-1}a\Big)\Big(R+\sqrt{r^2/4+b(y+1)}+\sqrt{d-1}a\Big)^{d-2},
\end{aligned}
\end{equation}
where in the second line we used the inequality ${d-1\choose i}\leq{1\over d-1}{d-2\choose i}$ and the fact that for $i=d-1$ we have $R^{d-1}-(R+r)^{d-1}\leq 0$, and in the third step we applied the inequality $R^i+(R+r)^i\leq (1+(1+c_0)^{d-2})R^i$ for any $0\leq i\leq d-2$.

To evaluate these terms in our situations, we first consider the case $\xi=\eta_{\beta}$. Take $t=0$ and $a={\sqrt{by+r^2/4}\over 2\sqrt{2(d-1)}}$. Then by estimate 2(a) of Lemma \ref{lm:4_3} we have $P_2(\eta_{\beta})=0$ and by estimate 1(a) of Lemma \ref{lm:4_3} for $r>0$ we obtain
\begin{equation}\label{eq_19.10_7}
p_{a,b}(\eta_\beta,y)\leq \exp\big(-C(by+r^2/4)^{{d+1\over 2}+\beta}\big).
\end{equation}
In case when $c_0R\leq r$ we take $b=1$ and we combine the estimate above together with \eqref{eq_19.10_1}, \eqref{eq_19.10_3} and \eqref{eq_19.10_4} to obtain
\begin{align*}
\PP(\cE(\eta_{\beta}))&\ll\sum\limits_{i=0}^{d-1}{d-1\choose i}R^{d-1-i}\sum_{y=0}^{\infty} (r^2/4+y)^{-{d-1\over 2}}(r^2/4+y+1)^{i\over 2}\exp\big(-C(y+r^2/4)^{{d+1\over 2}+\beta}\big).
\end{align*}
For $0\leq i\leq d-1$ consider the functions
$$
S_i(y):=(r^2/4+y)^{-{d-1\over 2}}(r^2/4+y+1)^{i\over 2}\exp\big(-C(y+r^2/4)^{{d+1\over 2}+\beta}\big),
$$
which are strictly decreasing for $y\ge 0$. Hence, substituting $s={r^2\over 4}+y$ and using
\begin{equation}\label{eq:22.06.21_5}
\sum_{y=0}^{\infty}S_i(y)\leq S_i(0)+\int_{0}^{\infty}S_i(y)\dint y,
\end{equation}
the sum in the last expression can be estimated as
\begin{equation}\label{eq:01.10.20}
\begin{aligned}
\PP(\cE(\eta_{\beta}))&\ll \sum\limits_{i=0}^{d-1}{d-1\choose i}R^{d-1-i}\Big(\int_{r^2/4}^{\infty} s^{-{d-1\over 2}}(s+1)^{i\over 2}\exp\big(-Cs^{{d+1\over 2}+\beta}\big)\dint s\\
&\qquad\qquad\qquad\qquad + r^{-d+1}(r^2+4)^{i\over 2}\exp\big(-Cr^{d+1+2\beta}\big)\Big).
\end{aligned}
\end{equation}
For $0\leq i\leq d-1$ and $Q>0$ consider now the integral
\begin{align*}
I_i(Q)&:=\int_{Q}^{\infty} s^{-{d-1\over 2}}(s+1)^{i\over 2}\exp\big(-Cs^{{d+1\over 2}+\beta}\big)\dint s.
\end{align*}
Since the function $(1+s^{-1})^{i/2}$, $0\leq i\leq d-1$ is decreasing for $s\in[Q,\infty)$ we have $(s+1)^{i\over 2}\leq s^{i\over 2}Q^{-{i\over 2}}(1+Q)^{i\over 2}$ and, thus,
\begin{align*}
I_i(Q)&\leq Q^{-{i\over 2}}(Q+1)^{i\over 2}\int_{Q}^{\infty} s^{-{d-i-1\over 2}}\exp\big(-Cs^{{d+1\over 2}+\beta}\big)\dint s.
\end{align*}
Making the change of variables $s^{d+1+2\beta\over 2}=Q^{{d+1\over 2}+\beta}t$ for $Q>0$ we arrive at
\begin{equation}\label{eq:22.06.21_1}
I_i(Q)\ll Q^{3-d\over 2}(Q+1)^{i\over 2}\int_{1}^{\infty} t^{-{2d-i-2+2\beta\over d+1+2\beta}}\exp\big(-CQ^{{d+1\over 2}+\beta}t\big)\dint t.
\end{equation}
For $0\leq i\leq d-3$ we have ${2d-i-2+2\beta\over d+1+2\beta} \ge 1$ and, thus,
\begin{align*}
I_i(Q)&\ll Q^{3-d\over 2}(Q+1)^{i\over 2}\int_{1}^{\infty} t^{-1}\exp\big(-CQ^{{d+1\over 2}+\beta}t\big)\dint t
\ll Q^{3-d\over 2}(Q+1)^{i\over 2}E_1(CQ^{{d+1\over 2}+\beta}),
\end{align*}
where $E_1(x)=\int_x^\infty t^{-1}e^{-t}\,\dint t$ is usual exponential integral, for which the upper bound
$$
E_1(x)\leq e^{-x}\log(1+1/x)<e^{-x}x^{-1},\qquad x>0,
$$
is well known, see \cite[Inequality (6.8.1)]{NIST}. Hence, for $0\leq i\leq d-3$ we obtain
\begin{equation}\label{eq:22.06.21_2}
I_i(Q)\ll Q^{1-d-\beta}(Q+1)^{i\over 2}\exp(-CQ^{{d+1\over 2}+\beta}).
\end{equation}
For $i=d-2$ and $i=d-1$ the integral in \eqref{eq:22.06.21_1} is proportional to $Q^{-{1\over 2}}\Gamma({1\over d+1+2\beta},CQ^{{d+1\over 2}+\beta})$ and $Q^{-1}\Gamma({2\over d+1+2\beta},CQ^{{d+1\over 2}+\beta})$ respectively, where $\Gamma(a,z)=\int_{z}^{\infty}t^{a-1}e^{-t}\dint t$ represents the incomplete gamma function. Using the recurrence relation
$$
\Gamma(a+n,z)=(a)_n\Gamma(a,z)+z^ae^{-z}\sum\limits_{k=0}^{n-1}{\Gamma(a+n)\over\Gamma(a+k+1)}z^k
$$
from \cite[Equation (8.8.9)]{NIST} and the estimate $z^{1-a}e^z\Gamma(a,z)\leq 1$ for $z>0$ and $0<a\leq 1$ from  \cite[Inequality (8.10.1)]{NIST} we get
\begin{equation}\label{eq:22.06.21_3}
\Gamma(a+n,z)\ll e^{-z}\max(z^{a+n-1},z^{a-1}).
\end{equation}
Thus, for $Q>1/4$, $i= d-2, d-1$, $d\ge 2$ and $\beta>-1$ we obtain
\begin{equation}\label{eq:22.06.21_4}
I_i(Q)\ll Q^{1-d-\beta}(Q+1)^{i\over 2}\exp(-CQ^{{d+1\over 2}+\beta}).
\end{equation}

Finally substituting \eqref{eq:22.06.21_2} and \eqref{eq:22.06.21_4} with $Q={r^2\over 4}$ into \eqref{eq:01.10.20} and taking  into accout that ${R\over r}\leq c_0$, $r\ge 1/3$, $d+2\beta\ge 0$ we obtain that for  some constants $c_1:=c_1(d,\beta)>0$, $c_2:=c_2(d, \beta, c_0)>0$,
\begin{align*}
\PP(\cE(\eta_{\beta}))&\ll (r^{-2d+2-2\beta}+r^{-d+1})(R+\sqrt{r^2+4})^{d-1}e^{-Cr^{d+1+2\beta}}\\
&\ll (r^{-d+1-2\beta}+1)\Big({R\over r}+\sqrt{1+{4\over r^2}}\Big)^{d-1}e^{-Cr^{d+1+2\beta}}\\
&\leq c_2re^{-c_1r^{d+1+2\beta}}.
\end{align*}
Consider the case $c_0R>r$. Taking $a={\sqrt{by+r^2/4}\over 2\sqrt{2(d-1)}}$ and choosing $b=1/8$ we denote by $Y$ the smallest integer $y$ such that
\begin{equation}\label{eq_20.10_1}
y\ge 8(R+r-{1\over8}\sqrt{2r^2+y})^2-2r^2-1
\end{equation}
holds. Then combining the estimate \eqref{eq_19.10_3}, \eqref{eq_19.10_5}---\eqref{eq_19.10_7} and inequality ${d-1\choose i}\leq {1\over d-1}{d-2\choose i}$ we get
\begin{align*}
\PP(\cE(\eta_{\beta}))&\ll \sum\limits_{i=0}^{d-1}{d-1\choose i}R^{d-1-i}\sum_{y=Y}^{\infty} (2r^2+y)^{-{d-1\over 2}}(2r^2+y+1)^{i\over 2}\exp\big(-C(y+2r^2)^{{d+1\over 2}+\beta}\big)\\
&\hspace{1cm}+\sum\limits_{i=0}^{d-2}{d-2\choose i}R^{d-2-i}\sum_{y=0}^{Y-1}(2r^2+y)^{-{d-1\over 2}}(2r^2+y+1)^{i+1\over 2}\exp\big(-C(y+2r^2)^{{d+1\over 2}+\beta}\big)\\
&\ll \sum\limits_{i=0}^{d-2}{d-2\choose i}R^{d-2-i}\sum_{y=0}^{\infty} (2r^2+y)^{-{d-1\over 2}}(2r^2+y+1)^{i+1\over 2}\exp\big(-C(y+2r^2)^{{d+1\over 2}+\beta}\big)\\
&\hspace{2cm}+R^{d-1}\sum_{y=Y}^{\infty} (2r^2+y)^{-{d-1\over 2}}\exp\big(-C(y+2r^2)^{{d+1\over 2}+\beta}\big).
\end{align*}
Using the integral estimate for the sum \eqref{eq:22.06.21_5} and inequalities \eqref{eq:22.06.21_2}, \eqref{eq:22.06.21_4} with $Q=2r^2$ we obtain
\begin{align*}
\PP(\cE(\eta_{\beta}))&\ll (r^{-2d+2-2\beta}+r^{-d+1})\sqrt{2r^2+1}(R+\sqrt{2r^2+1})^{d-2}e^{-Cr^{d+1+2\beta}}\\
&\hspace{2cm}+R^{d-1}\sum_{y=Y}^{\infty} (2r^2+y)^{-{d-1\over 2}}\exp\big(-C(y+2r^2)^{{d-1\over 2}+\beta+1}\big).
\end{align*}
Note that for $y=(R+r)^2-2r^2$ and $r\ge 1/3/2$ the inequality \eqref{eq_20.10_1} does not hold, that is why we have $Y>(R+r)^2-2r^2$ and substituting $s=2r^2+y$ we obtain
\begin{align*}
I(R,r)&:=R^{d-1}\sum_{y=Y}^{\infty} (2r^2+y)^{-{d-1\over 2}}\exp\big(-C(y+2r^2)^{{d+1\over 2}+\beta}\big)\\
&\ll R^{d-1}\int_{(R+r)^2}^{\infty} s^{-{d-1\over 2}}\exp\big(-Cs^{{d+1\over 2}+\beta}\big)\dint s+ R^{d-1}(R+r)^{-d+1}\exp\big(-C(R+r)^{d+1+2\beta}\big).
\end{align*}
Then by \eqref{eq:22.06.21_2} with $i=0$ and $Q=(R+r)^2$ and using the inequality $(R+r)^{d+1+2\beta}\ge R^{d+1+2\beta}+r^{d+1+2\beta}$ for $d\ge 2$, $\beta>-1$ and $R,r>0$ and estimate $Re^{-CR^{d+1+2\beta}}\ll 1$ we conclude
\begin{align*}
I(R,r)&\ll R^{d-1}((R+r)^{-2d+2-2\beta}+(R+r)^{-d+1})e^{-CR^{d+1+2\beta}}e^{-Cr^{d+1+2\beta}}\\
&\ll R^{d-2}(r^{-2d+2-2\beta}+r^{-d+1})e^{-Cr^{d+1+2\beta}}.
\end{align*}
Thus, since ${r\over R} < c_0$ and $r\ge 1/3$ we conclude that for some constants $\tilde{c_1}:=\tilde c_1(d,\beta,c_0)>0$ and $\tilde{c_2}:=\tilde c_2(d,\beta, c_0)>0$,
\begin{align*}
\PP(\cE(\eta_{\beta}))&\ll \big(r^{-2d+2-2\beta}+r^{-d+1}\big)\big(\sqrt{2r^2+1}(R+\sqrt{2r^2+1})^{d-2}+R^{d-2}\big)e^{-Cr^{d+1+2\beta}}\\
&\ll R^{d-2}r(r^{-2d+2-2\beta}+r^{-d+1})\Big(1+\sqrt{2\big(r/R\big)^2+R^{-2}}\Big)^{d-2}e^{-Cr^{d+1+2\beta}}\\
&\leq \tilde c_2 R^{d-2}r^{-d+3}e^{-\tilde c_1r^{d+1+2\beta}}.
\end{align*}
This completes the argument for the $\beta$-Delaunay tessellation.

Now consider the case $\xi=\eta^{\prime}_\beta$. Let $t=-r^2/4$, then for any $y\ge 0$ and $r>0$ we have $t+r^2/4+by\ge 0$ and by Lemma \ref{lm:4_3} estimate 1(b) we conclude that $p_{a,b}(\eta^{\prime}_{\beta},y)=0$ for all $y\ge 0$. On the other hand consider a partition of $\RR^{d-1}$ into boxes with side length ${c_0^{-1}r\over \sqrt{d-1}}$ and denote by $m$ the number of boxes intersecting the set $B_{R+r}\setminus B_R$. Then applying union bound and estimate 2(b) of Lemma \ref{lm:4_3} we obtain
\begin{align*}
P_2(\eta^{\prime}_{\beta})&\leq m\PP(\inf\limits_{\substack{(v,h)\in\partial \Psi(\xi),v\in B_{c_0^{-1}r}}} h < t)\leq m\Big(1-\exp\big(-Cr^{1+d-2\beta}\big)\Big)\ll m r^{-(2\beta-d-1)},
\end{align*}
where in the last step we used the inequality $1-e^{-x}\leq x$, $x\ge 0$. For $m$ we have the estimate
\begin{align*}
m\ll r^{-d+1}\Vol(B_{R+2c_0^{-1}r}\setminus B_{R-c_0^{-1}r}).
\end{align*}
For $R-c_0^{-1}r\leq 0$ we obtain
$$
P_2(\eta^{\prime}_{\beta})\ll \Big({R\over r}+2c_0^{-1}\Big)^{d-1}r^{-(2\beta-d-1)}\leq c_3r^{-(2\beta-d-1)}
$$
for some $c_3:=c_3(d,\beta,c_0)>0$. If $r< c_0R$, then
\begin{align*}
P_2(\eta^{\prime}_{\beta})&\ll r^{-d+1}\big((R+2c_0^{-1}r)^{d-1}-(R-c_0^{-1}r)^{d-1}\big)r^{-2\beta+d+1}\\
&\ll(R+2c_0^{-1}r)^{d-2}r^{-d+2}r^{-2\beta+d+1}\\
&\leq \tilde c_3 R^{d-2}r^{-2\beta+3}
\end{align*}
for some $\tilde c_3:=\tilde c_3(d,\beta,c_0)>0$. This together with \eqref{eq_19.10_1} yields the required bound for the $\beta'$-Delaunay tessellation.

Finally, consider the situation where $\xi=\zeta$. We again distinguish two cases when $r\ge c_0R$ and when $r<c_0R$ and as in the previous case we consider a partition of $\RR^{d-1}$ into boxes with side length    ${c_0^{-1}r\over \sqrt{d-1}}$. Let $r\ge c_0R$ and applying the estimate 2(c) of Lemma \ref{lm:4_3} with $t=-2(d-1)\ln r -{r^2\over 16}$ and the union bound we get
$$
P_2(\zeta)\ll\Big({R\over r}+2c_0^{-1}\Big)^{d-1}\big(1-\exp(-Ce^{-r^2/32})\big)\ll e^{-r^2/32}.
$$
By Lemma \ref{lm:4_3} estimate 1(c) with $2(d-1)a^2={r^2\over 32}$ and $b=1$ we get
\begin{equation}\label{eq_23.10_1}
p_{a,b}(\zeta, y)\ll \exp(-C(y+r^2/8)),
\end{equation}
where we used that $e^x\ge x+1$, $x\ge 0$. Together with the union bound, estimates \eqref{eq_19.10_3} and \eqref{eq_19.10_4}, and inequality $r\ge 1/3$ this shows that
\begin{align*}
P_1(\zeta)
&\ll r^{-d+1}e^{-{C\over 8}r^2}\sum_{i=0}^{d-1}{d-1\choose i}R^{d-1-i}\sum_{y=0}^{\infty}(r^2/4+y)^{i/2}e^{-Cy}.
\end{align*}
Since the function $S_i(y):=(r^2/4+y)^{i\over 2}e^{-Cy}$ is strictly decreasing in $y$ for any $0\leq i\leq d-1$ and $y\ge 0$, substituting $s={r^2\over 4}+y$ we have
\begin{align*}
P_1(\zeta)&\ll r^{-d+1}e^{-{C\over 4}r^2} \sum_{i=0}^{d-1}{d-1\choose i}R^{d-1-i}\Big( e^{Cr^2/4}\int_{Cr^2/4}^{\infty}s^{{i\over 2}}e^{-s}\dint s+r^{i}\Big)\\
&\ll e^{-{C\over 4}r^2}\sum_{i=0}^{d-1}{d-1\choose i}R^{d-1-i}r^{-d+1+i}\\
&\ll e^{-Cr^2},
\end{align*}
where in the second line we used the inequality \eqref{eq:22.06.21_3} for incomplete Gamma function $\Gamma(i/2+1,r^2/4)$, $r\ge 1/3$, and in the third line we applied the estimate ${R\over r}\leq c_0^{-1}$. Together with the bound for $P_2(\zeta)$ provided above we get, for some constants $c_5:=c_5(d,c_0)>0$ and $c_4:=c_4(d,c_0)>0$,
$$
P(\cE(\zeta))<c_5e^{-c_4r^2}.
$$
When $c_0R>r$, $r\ge 1/3$ we again choose $t=-2(d-1)r -{r^2\over 16}$ and by the estimate 2(c) of Lemma \ref{lm:4_3}  together with union bound we obtain
\begin{align*}
P_2(\zeta)&\ll R^{d-2}r^{-d+2} e^{-r^2/32},
\end{align*}
where we proceeded as in the analogous situation for $\eta^{\prime}_{\beta}$. Take $2(d-1)a^2={r^2\over 32}$, $b=1$ and denote by $Y$ the smallest integer $y$ such that $y\ge (R+7r/8)^2-r^2/4-1$. It is clear that $Y\ge {1\over 2}(R+r)^2-r^2/4$. Combining the estimates \eqref{eq_19.10_3}, \eqref{eq_19.10_5}, \eqref{eq_19.10_6} and \eqref{eq_23.10_1} and proceeding as in the case of $\eta_{\beta}$ we obtain
\begin{align*}
P_1(\zeta)&\ll r^{-d+1}e^{-Cr^2}\sum\limits_{i=0}^{d-2}{d-2\choose i}R^{d-2-i}\sum_{y=0}^{\infty} (r^2/4+y)^{i+1\over 2}e^{-Cy}+R^{d-1}r^{-d+1}e^{-Cr^2}\sum_{y=Y}^{\infty}e^{-Cy}.
\end{align*}
Using integral bound for the sum and applying \eqref{eq:22.06.21_3} and $(R+r)^2\ge R^2+r^2$ we get
\begin{align*}
P_1(\zeta)&\ll R^{d-2}r^{-d+2}e^{-Cr^2}+R^{d-1}e^{-CR^2}r^{-d+1}e^{-Cr^2}\ll R^{d-2}r^{-d+2}e^{-C r^2},
\end{align*}
Together with the estimate for $P_2(\zeta)$ we obtain for some constants $\tilde c_5:=\tilde c_5(d,c_0)>0$ and $\tilde c_4:=\tilde c_4(d,c_0)>0$,
$$
P(\cE(\zeta))<\tilde c_5R^{d-2}r^{-d+2}e^{-\tilde c_4r^2}.
$$
This completes the argument.
\end{proof}

The previous lemma provides a stabilization property for the $\beta$-, the $\beta'$- and the Gaussian Delaunay tessellation \textit{inside} a ball with fixed radius. We need a similar property also for these tessellations \textit{outside} such balls. The next lemma provides a corresponding result.

\begin{lemma}\label{lm:4_5}
\begin{enumerate}
\item For any $R'-r'>0$, $r'\ge 1/3$, $d\ge 2$, $\beta>-1$ and for any constant $c_0>0$ we have
\begin{align*}
\PP(\widetilde{\cD}_{\beta}\cap (\RR^{d-1}\setminus B_{R'}) &\text{ is not determined by particles } (v,h)\in\partial \Psi(\zeta_{\beta})\text{ with } v\in (\RR^{d-1}\setminus B_{R'-r'}))\\
&< \begin{cases}
c'_2r'e^{-c'_1(r')^{d+1+2\beta}}\quad &:\quad r'\ge c_0(R'-r'),\\
\tilde c'_2 (R'-r')^{d-2}(r')^{-d+3}e^{-\tilde c'_1(r')^{d+1+2\beta}}\quad &:\quad r'< c_0(R'-r'),
\end{cases}
\end{align*}
for some positive constants $c'_1,\tilde c'_1, c'_2, \tilde c'_2$, depending on $d$, $\beta$ and $c_0$ only.
\item For any $R'-r'>0$, $r'>0$, $d\ge 2$ and $\beta >(d+1)/2$, and for any constant $c_0>0$ we have
\begin{align*}
\PP(\widetilde{\cD}^{\prime}_{\beta}\cap (\RR^{d-1}\setminus B_{R'}) &\text{ is not  determined by particles } (v,h)\in\partial \Psi(\zeta_{\beta}^{\prime})\text{ with } v\in (\RR^{d-1}\setminus B_{R'-r'}))\\
&< \begin{cases}
c'_3(r')^{-(2\beta-d-1)}\quad &:\quad r'\ge c_0(R'-r'),\\
\tilde c'_3 (R'-r')^{d-2}(r')^{-2\beta+3}\quad &:\quad r'< c_0(R'-r'),
\end{cases}
\end{align*}
for some positive constants $c'_3,\tilde c'_3$, depending on $d$, $\beta$ and $c_0$ only.
\item For any $R'-r'>0$, $r'\ge 1/3$, and $d\ge 2$, and for any constant $c_0>0$ we have
\begin{align*}
\PP(\widetilde{\cD}\cap (\RR^{d-1}\setminus B_{R'}) &\text{ is not determined by particles } (v,h)\in\partial \Psi(\zeta)\text{ with }v\in (\RR^{d-1}\setminus B_{R'-r'}))\\
&< \begin{cases}
c'_5e^{-c'_4(r')^2}\quad &:\quad r'\ge c_0(R'-r'),\\
\tilde c'_5 (R'-r')^{d-2}(r')^{-d+2}e^{-\tilde c'_4(r')^2}\quad &:\quad r'< c_0(R'-r'),
\end{cases}
\end{align*}
for some positive constants $c'_4,\tilde c'_4, c'_5, \tilde c'_5$, depending on $d$ and $c_0$ only.
\end{enumerate}
\end{lemma}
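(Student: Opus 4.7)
The plan is to reduce Lemma \ref{lm:4_5} directly to the analysis carried out for Lemma \ref{lm:4_4} via a geometric identification. The key observation is that the two lemmas share the same annular slab of ambiguity, namely $B_{R'}\setminus B_{R'-r'}$, which coincides with $B_{R+r}\setminus B_R$ under the identification $R := R'-r'$, $r := r'$. With $\xi$ running over $\eta_\beta$, $\eta_\beta'$ and $\zeta$, I would introduce the event
$$
\cE'(\xi) := \{\widetilde\cD(\xi)\cap(\RR^{d-1}\setminus B_{R'}) \text{ is not determined by particles with spatial coordinate in }\RR^{d-1}\setminus B_{R'-r'}\}
$$
and decompose $\PP(\cE'(\xi)) \leq P_1'(\xi) + P_2'(\xi)$ by conditioning on whether $\inf\{h : (v,h) \in \partial\Psi(\xi),\, v \in \cl(B_{R'}\setminus B_{R'-r'})\}$ exceeds a carefully chosen threshold $t$, exactly as in the proof of Lemma \ref{lm:4_4}.

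The main geometric step is the following: a paraboloid facet $F(v',h')$ violating the determinacy claim in the sense of $\cE'(\xi)$ -- namely one whose spatial projection meets $\RR^{d-1}\setminus B_{R'}$ while one of its vertices has spatial coordinate in $B_{R'-r'}$ -- must, once we condition on the height exceeding $t$ on the annular slab, satisfy simultaneously
$$
B_{\sqrt{h'-t}}(v')\cap B_{R'-r'}\neq\varnothing \quad\text{and}\quad B_{\sqrt{h'-t}}(v')\cap(\RR^{d-1}\setminus B_{R'})\neq\varnothing,
$$
forcing the apex $v'$ to lie in the annulus $B_{R'-r'+\sqrt{h'-t}}\setminus B_{R'-\sqrt{h'-t}}$. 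Under the substitution $R = R'-r'$, $r = r'$, this is literally the apex region $B_{R+\sqrt{h'-t}}\setminus B_{R+r-\sqrt{h'-t}}$ that appeared in the proof of Lemma \ref{lm:4_4}. To derive it one would run the same connectedness and paraboloid-convexity argument: starting from a vertex of $F$ with spatial coordinate in $B_{R'-r'}$ and a point of $F$ with spatial coordinate in $\RR^{d-1}\setminus B_{R'}$, paraboloid-convexity of $F$ together with the height conditioning forces the line segment of spatial projections between them to stay inside $B_{\sqrt{h'-t}}(v')$.

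Once this apex identification is in place, every subsequent step of the proof of Lemma \ref{lm:4_4} transfers without modification: the partition of $\RR^{d-1}$ into boxes of side length $a$ depending on $y$, the counting of the number $m(y)$ of boxes meeting the apex region at height-level $y$, the application of Lemma \ref{lm:4_3} to bound heights over balls of radius proportional to $a$, the evaluation of the resulting sums via the incomplete Gamma-function estimates, and the separate bound on $P_2'(\xi)$ obtained by partitioning the annular slab into boxes of side length $c_0^{-1}r'/\sqrt{d-1}$ and applying part 2 of Lemma \ref{lm:4_3}. Since all polynomial prefactors and exponents produced in the proof of Lemma \ref{lm:4_4} depend only on $R$ and $r$, the substitution $R \mapsto R'-r'$, $r \mapsto r'$ converts the three assertions of Lemma \ref{lm:4_4} into the three assertions of Lemma \ref{lm:4_5}. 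The only substantive point requiring care is the geometric identification of the apex region described above; the remaining work is careful relabelling rather than new analysis, and I do not expect any genuine obstacle beyond this bookkeeping.
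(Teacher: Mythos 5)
Your proposal is correct and takes essentially the same route as the paper: the authors likewise condition on the infimum of the height of $\partial\Psi(\xi)$ over the annulus $\cl(B_{R'}\setminus B_{R'-r'})$, show that an offending paraboloid facet has its apex ball $B_{\sqrt{h'-t}}(v')$ meeting both $B_{R'-r'}$ and $\RR^{d-1}\setminus B_{R'}$, and then conclude by rerunning the proof of Lemma \ref{lm:4_4} with $R=R'-r'$ and $r=r'$. Your identification of the apex region is exactly the one the paper uses (note the paper's displayed condition ``$B_{\sqrt{h'-t}}(v')\cap B_{R'-r'}=\varnothing$'' is evidently a typo for $\neq\varnothing$, as your version is the one consistent with the subsequent substitution).
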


\begin{proof}
The proof is very similar to the proof of Lemma \ref{lm:4_4}, but for completeness we provide some details. In what follows, we will write $\xi$ for one of the Poisson point processes $\zeta$, $\eta_{\beta}$ or $\eta^{\prime}_{\beta}$. 
 For $R'\ge 1/3$ and $1/3\leq r'\leq R'$ consider the event
\begin{align*}
\cE(\xi) &:=\{\widetilde{\cD}(\xi)\cap (\RR^{d-1}\setminus B_{R'}) \text{ is not determined by particles } (v,h)\in\partial \Psi(\xi)\\
&\hspace{8cm}\text{ with }v\in (\RR^{d-1}\setminus B_{R'-r'})\}.
\end{align*}
By the law of total probability we have that $\PP(\cE(\xi))\leq P_1(\xi)+P_2(\xi)$, where
\begin{align*}
P_1(\xi)&:= \PP\big(\cE(\xi) \, \big| \, \inf\limits_{\substack{(v,h)\in\partial \Psi(\xi),v\in \cl(B_{R'}\setminus B_{R'-r'})}} h > t\big),\\
P_2(\xi)&:=\PP\big(\inf\limits_{\substack{(v,h)\in\partial \Psi(\xi),v\in \cl(B_{R'}\setminus B_{R'-r'})}} h < t\big).
\end{align*}

According to the construction $\widetilde{\cD}(\xi)\cap (\RR^{d-1}\setminus B_{R'})$ is determined as soon as we know the location of all vertices of paraboloid facets of $\Phi(\xi)$ hitting the set $(\RR^{d-1}\setminus B_{R'})\times \RR$ and the event $\cE(\xi)$ occurs if and only if there is paraboloid facet of $\Phi(\xi)$ hitting the set $(\RR^{d-1}\setminus B_{R'})\times \RR$ and having a vertex $(v,h)$ with $v\not \in (\RR^{d-1}\setminus B_{R'-r'})$. Let $\Pi_{-}(v',h')$ be a paraboloid such that $F(v',h'):=\Pi_{-}(v',h')\cap \Phi(\xi)$ is a paraboloid facet of $\Phi(\xi)$. Then conditioning on
$$
\inf\limits_{\substack{(v,h)\in\partial \Psi(\xi),v\in \cl(B_{R'}\setminus B_{R'-r'})}} h < t,
$$
we have
\begin{align*}
P_1(\xi)&\leq \PP(\exists\text{ a paraboloid facet } F(v',h') { of } \Phi(\xi)\colon\\
&\hspace{1cm} B_{\sqrt{h'-t}}(v')\cap (\RR^{d-1}\setminus B_{R'})\neq \varnothing, B_{\sqrt{h'-t}}(v')\cap B_{R'-r'}=\varnothing),
\end{align*}
where the arguments are the same as in the proof of Lemma \ref{lm:4_4}. The rest now follows as in the proof of Lemma \ref{lm:4_4} with $R=R'-r'$ and $r=r'$.
\end{proof}

\section{Absolute regularity for $\beta^{(')}$- and Gaussian Delaunay tessellations}\label{sec:AbsoluteRegularity}

In this section we establish absolute regularity for the $\beta$-Delaunay tessellation $\widetilde{\cD}_\beta$, the $\beta'$-Delaunay tessellation $\widetilde{\cD}_\beta'$ and the Gaussian-Delaunay tessellation $\widetilde{\cD}$. More specifically, we prove tight bounds on the absolute regularity coefficients $\mathscr{B}(a,b)$ for $a>0$, $b-a\ge 1$, which will be of importance in the next section when we deal with central limit theorems.

\begin{theorem}\label{thm:BetaEstimates}
\begin{itemize}
\item[(i)] Let $\beta> -1$. Then the $\beta$-Delaunay tessellation $\widetilde{\cD}_\beta$ is absolutely regular and, moreover, for any $a>0$, $b-a\ge 1$ and $c_0>0$ we have
$$
\mathscr{B}(a,b)\leq\begin{cases} c_7(d,\beta,c_0)(b-a)e^{-c_6(d,\beta)(b-a)^{d+1+2\beta}} &: b-a>c_0a\\
\tilde c_7(d,\beta,c_0)a^{d-2}(b-a)^{-d+3}e^{-\tilde c_6(d,\beta)(b-a)^{d+1+2\beta}} &: b-a\leq c_0a,
\end{cases}
$$
where $c_6(d,\beta)$, $\tilde c_6(d,\beta)$, $c_7(d,\beta,c_0)$ and $\tilde c_7(d,\beta,c_0)$ are positive constants only depending on $d$ and $\beta$ and additionally on $c_0$ in case of $c_7, \tilde c_7$.
\item[(ii)] Let $\beta>(d+1)/2$. Then the $\beta'$-Delaunay tessellation $\widetilde{\cD}_\beta'$ is absolutely regular and, moreover, for any $b>a>0$ and $c_0>0$ we have
$$
\mathscr{B}(a,b)\leq \begin{cases} c_8(d,\beta,c_0)(b-a)^{-(2\beta-d-1)} &: b-a>c_0a\\
\tilde c_8(d,\beta,c_0)a^{d-2}(b-a)^{-2\beta+3} &: b-a\leq c_0a,
\end{cases}
$$
where $c_8(d,\beta,c_0)$ and $\tilde c_8(d,\beta,c_0)$ are positive constants only depending on $d$ and $\beta$ and $c_0$.
\item[(iii)] The Gaussian-Delaunay tessellation $\widetilde{\cD}$ is absolutely regular and, moreover, for any $a>0$, $b-a\ge 1$ and $c_0>0$ we have
$$
\mathscr{B}(a,b)\leq \begin{cases}
c_{10}(d,c_0)e^{-c_9(d)(b-a)^2} &: b-a>c_0a\\
\tilde c_{10}(d,c_0)a^{d-2}(b-a)^{-d+2}e^{-\tilde c_9(d)(b-a)^2} &: b-a\leq c_0a
\end{cases}
$$
where $c_9(d)$, $\tilde c_9(d)$, $c_{10}(d,c_0)$ and $\tilde c_{10}(d,c_0)$ are positive constants only depending on $d$ and additionally on $c_0$ in case of $c_{10}, \tilde c_{10}$.
\end{itemize}
\end{theorem}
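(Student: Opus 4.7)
The plan is to combine the fine stabilization estimates of Lemmas~\ref{lm:4_4} and~\ref{lm:4_5} with Lemma~\ref{lm:betamixing}. Throughout, let $\xi$ denote the underlying Poisson point process ($\eta_\beta$, $\eta'_\beta$, or $\zeta$) and set $r:=(b-a)/2$, so that $a+r=b-r$ and the restrictions $\xi|_{B_{a+r}\times\RR}$ and $\xi|_{(\RR^{d-1}\setminus B_{b-r})\times\RR}$ live on disjoint regions and are therefore independent. Denote by $S_{\rm in}$ the inside-stabilization event of Lemma~\ref{lm:4_4} applied with $R=a$ and radius $r$, and by $S_{\rm out}$ the outside-stabilization event of Lemma~\ref{lm:4_5} applied with $R'=b$ and $r'=r$.

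On the good event $S:=S_{\rm in}\cap S_{\rm out}$ the trace $Z\cap B_a$ is a deterministic function of $\xi|_{B_{a+r}\times\RR}$, while $Z\cap(\RR^{d-1}\setminus B_b)$ is a deterministic function of $\xi|_{(\RR^{d-1}\setminus B_{b-r})\times\RR}$, and these two traces are therefore conditionally independent. To recast this in the framework of Lemma~\ref{lm:betamixing}, the next step is to introduce an independent copy $\xi'$ of $\xi$ and form the spliced process $\xi_\diamond:=\xi|_{B_{a+r}\times\RR}\cup\xi'|_{(\RR^{d-1}\setminus B_{a+r})\times\RR}$, which by Poisson independence satisfies $\xi_\diamond\stackrel{d}{=}\xi$. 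On the inside-stabilization events for both $\xi$ and $\xi_\diamond$ one has $\widetilde\cD(\xi_\diamond)\cap B_a=\widetilde\cD(\xi)\cap B_a$, while on the outside-stabilization event for $\xi_\diamond$ the set $\widetilde\cD(\xi_\diamond)\cap(\RR^{d-1}\setminus B_b)$ depends only on $\xi'$ and is hence independent of $Z\cap B_a$. Choosing $\cB\in\mathfrak{F}_a^{d-1}$ as the $Z\cap B_a$-measurable event that this splicing leaves $\widetilde\cD\cap B_a$ unchanged yields $\PP(Z\notin\cB)\leq\PP(S_{\rm in}^c)$ and, for every $\cA\subseteq\cB$ and every $\cA'\in\mathfrak{F}_{-b}^{d-1}$, the estimate $|\PP(Z\in\cA'\,|\,Z\in\cA)-\PP(Z\in\cA')|\leq\PP(S_{\rm out}^c)$. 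Lemma~\ref{lm:betamixing} then delivers the master bound
$$
\mathscr B(a,b)\;\leq\;\PP(S_{\rm in}^c)+\PP(S_{\rm out}^c).
$$

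To finish, substitute the quantitative tail bounds of Lemmas~\ref{lm:4_4} and~\ref{lm:4_5} with $R=a$, $R'=b$ and $r=r'=(b-a)/2$. After adjusting the value of $c_0$, the dichotomy $b-a>c_0 a$ vs $b-a\leq c_0 a$ in the theorem corresponds to the dichotomies $r\geq c_0'R$ vs $r<c_0'R$ in the stabilization lemmas (and analogously for the outside lemma, where $R'-r'=(a+b)/2$ plays the role of $R$). The three cases $\xi=\eta_\beta,\eta_\beta',\zeta$ then produce the decays $(b-a)e^{-c(b-a)^{d+1+2\beta}}$, $(b-a)^{-(2\beta-d-1)}$ and $e^{-c(b-a)^2}$ claimed in (i)--(iii), while the close-to-boundary regime $b-a\leq c_0 a$ automatically supplies the prefactors $a^{d-2}(b-a)^{-d+3}$, $a^{d-2}(b-a)^{-2\beta+3}$ and $a^{d-2}(b-a)^{-d+2}$, respectively. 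The main technical point, and the only step requiring genuine care, is the construction of the event $\cB\in\mathfrak{F}_a^{d-1}$ out of a stabilization event that a priori depends on the full Poisson configuration; the splicing coupling above is the standard mechanism for achieving this transfer, but its justification needs mild control on the conditional law of $\xi|_{B_{a+r}\times\RR}$ given $Z\cap B_a$.
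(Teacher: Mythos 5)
Your proposal follows essentially the same route as the paper: inside- and outside-stabilization events built from Lemmas~\ref{lm:4_4} and~\ref{lm:4_5}, conditional independence of $Z\cap B_a$ and $Z\cap(\RR^{d-1}\setminus B_b)$ on the good event because the determining regions of the Poisson process are disjoint, Lemma~\ref{lm:betamixing}, and then the quantitative tail bounds with the same translation of the dichotomy $b-a>c_0a$ versus $b-a\leq c_0a$ into the two regimes of the stabilization lemmas. The paper splits the annulus into thirds (it takes $r=r'=(b-a)/3$, so that the determining regions $B_{a+(b-a)/3}$ and $\RR^{d-1}\setminus B_{a+2(b-a)/3}$ are separated by an actual gap of width $(b-a)/3$) rather than in half, which is immaterial. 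The one genuine divergence is your splicing coupling used to manufacture $\cB$: the paper does not do this. It bounds $|\PP(Z\in\cA'\mid Z\in\cA)-\PP(Z\in\cA')|\leq\PP(\cE_0^c)+\PP(\cE_1^c)$ directly by conditioning on $\cE_0\cap\cE_1$ and its complement, and then applies Lemma~\ref{lm:betamixing} with $\cB$ taken to be the inside-stabilization event $\cE_0$ itself (reading the lemma, as in \cite{MN15}, at the level of events of the underlying probability space rather than literally as a member of $\mathfrak{F}_a^{d-1}$), which yields $\mathscr{B}(a,b)\leq 2\PP(\cE_0^c)+\PP(\cE_1^c)$ and then the stated rates. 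Your alternative, defining $\cB$ as ``the $Z\cap B_a$-measurable event that the splicing leaves $\widetilde{\cD}\cap B_a$ unchanged,'' does not work as stated: whether splicing alters the tessellation is a property of the pair of Poisson configurations, not of the trace $Z\cap B_a$, so this event is not $\sigma(Z\cap B_a)$-measurable --- a difficulty you yourself flag but do not resolve. Since the paper's simpler handling of this point is available and the rest of your argument (independence across the gap, the substitution $R=a$, $R'=b$, and the constant-chasing in the two regimes) matches the paper's, this is an unnecessary detour that leaves one step incomplete rather than a flaw in the overall strategy.
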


\begin{proof}
Let us write $\xi$ for one of the Poisson point processes $\eta_{\beta}$, $\eta^{\prime}_{\beta}$ and $\zeta$. 
 For fixed $0<a<b$ we consider the following two events
\begin{figure}[t]
\begin{center}
\includegraphics[width=0.65\columnwidth]{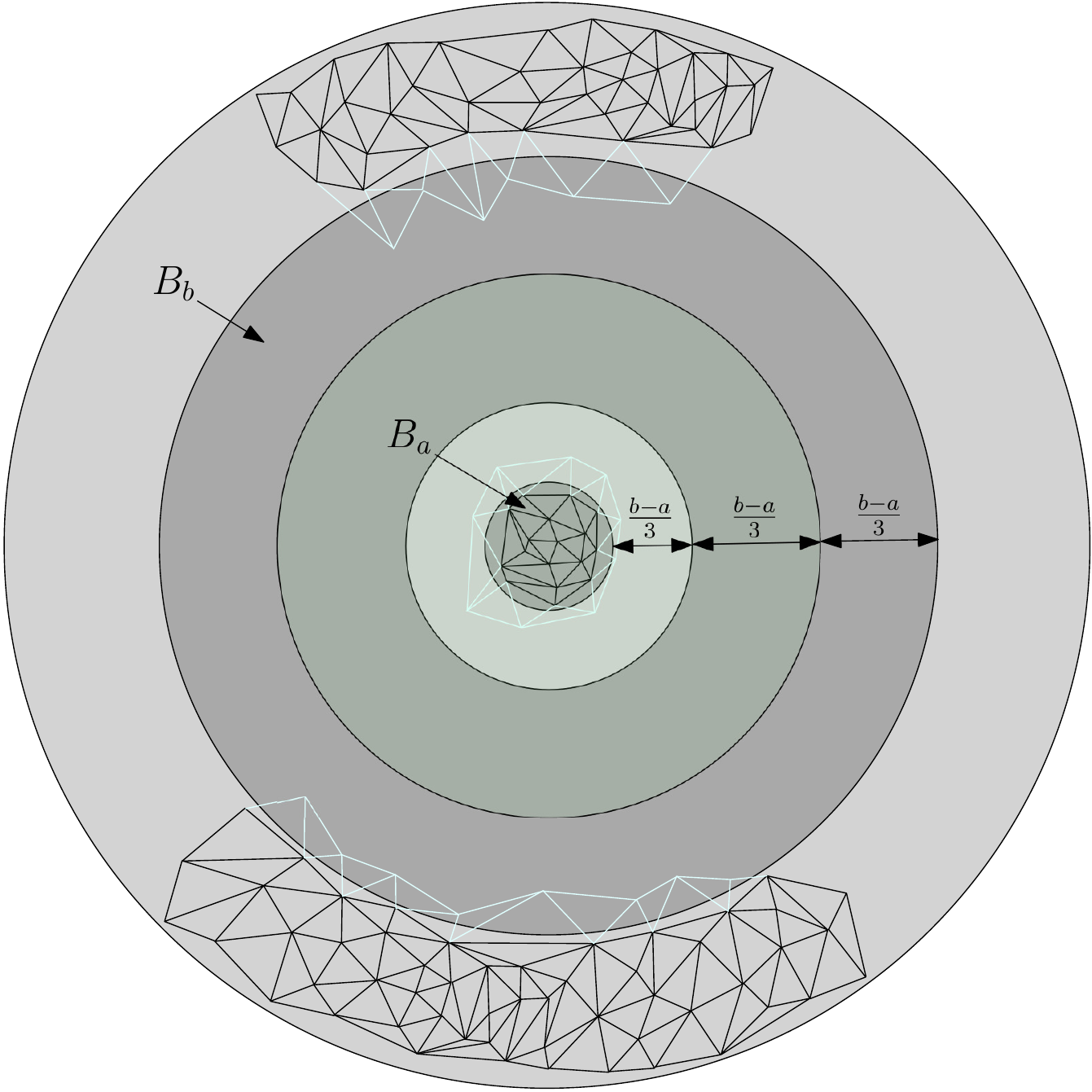}
\end{center}
\caption{Illustration of the events $\cE_0$ and $\cE_1$. Indicated is the tessellation $\cD(\xi)$ inside the ball $B_a$ (which under $\cE_0$ is determined by particles of $\partial\Psi(\xi)$ with spatial coordinate in $B_{a+(b-a)/3}$) and outside the ball $B_{b}$ (which under $\cE_1$ is determined by particles of $\partial\Psi(\xi)$ with spatial coordinate outside of $B_{a+2(b-a)/3}$).}
\label{fig:StabilizationBaBb}
\end{figure}
\begin{align*}
\cE_0&:=\{\cD(\xi)\cap B_a \text{ is determined by particles } (v,h)\in\partial \Psi(\xi) \text{ with }v\in B_{a+(b-a)/3}\},\\
\cE_1&:=\{\cD(\xi)\cap (\RR^{d-1}\setminus B_{b}) \text{ is determined by particles } (v,h)\in\partial \Psi(\xi)\\
&\hspace{8cm} \text{ with }v\in (\RR^{d-1}\setminus B_{a+2(b-a)/3})\},
\end{align*}
which are illustrated in Figure \ref{fig:StabilizationBaBb}. Using the law of total probability, for any $\cA^{\prime}\in\frak{F}^{d-1}_{-b}$ and any $\cA\in\frak{F}_a^{d-1}$ we have that
\begin{align*}
|\PP(\cD(\xi)&\in\cA^{\prime}|\cD(\xi)\in\cA)-\PP(\cD(\xi)\in\cA^{\prime})|\\
&\leq |\PP(\cD(\xi)\in\cA^{\prime}| \{\cD(\xi)\in\cA\}\cap \cE_0\cap\cE_1)-\PP(\cD(\xi)\in\cA^{\prime}|\cE_0\cap\cE_1)|\PP(\cE_0\cap\cE_1)\\
&\qquad+|\PP(\cD(\xi)\in\cA^{\prime}| \{\cD(\xi)\in\cA\}\cap (\cE_0\cap\cE_1)^{c})-\PP(\cD(\xi)\in\cA^{\prime}|(\cE_0\cap\cE_1)^c)|\PP((\cE_0\cap\cE_1)^c)\\
&\leq\PP(\cE_0^c)+\PP(\cE_1^c),
\end{align*}
since the events $\{\cD(\xi)\in\cA\}$ and $\{\cD(\xi)\in\cA^{\prime}\}$ are independent conditionally on $\cE_0\cap\cE_1$. Now, we apply Lemma \ref{lm:betamixing} with $Z:=\cD(\xi)$ and $\cB=\cE_0$. This yields the estimate
$$
\mathscr{B}(a,b)\leq 2\PP(\cE_0^c)+\PP(\cE_1^c).
$$
The probabilities $\PP(\cE_0^c)$ and $\PP(\cE_1^c)$ can be determined with the help of Lemma \ref{lm:4_4} and Lemma \ref{lm:4_5}. This leads to the following estimates (here, $c_6,\tilde c_6, c_7,\tilde c_7,\ldots$ are positive constants only depending on the respective parameter indicated in brackets):
\begin{enumerate}
\item If $\xi=\eta_{\beta}$ with $\beta>-1$, then we take $R=a$ and $r=(b-a)/3$ in Lemma \ref{lm:4_4} and $R' = b$, $r'=(b-a)/3$ in Lemma \ref{lm:4_5}. If $b-a > c_0a$, then $r> 3c_0R$ and
$$
{c_0\over 3+2c_0}(R'-r')={2c_0\over 3(3+2c_0)}b+{c_0\over 3(3+2c_0)}a={b-a\over 3}-{b\over 3+2c_0}+{a(1+c_0)\over 3+2c_0}\leq {b-a\over 3}=r'.
$$
Thus, we get
\begin{align*}
\mathscr{B}(a,b)&\leq c_7(d,\beta,c_0)(b-a)e^{-c_6(d,\beta)(b-a)^{d+1+2\beta}}.
\end{align*}
Clearly, this implies that
$$
\lim\limits_{b\to\infty}\mathscr{B}(a,b)=0
$$
for any fixed $a>0$, showing that the $\beta$-Delaunay tessellation $\cD_{\beta}$ is absolutely regular for any $\beta>-1$.

On the other hand, if $b-a\leq c_0a$ and $b-a\ge 1$ then $r<3c_0R$ and ${c_0(R'-r')\over 3+2c_0}>r'$ and we obtain
\begin{align*}
\mathscr{B}(a,b)&\leq \tilde c_2(d,\beta,c_0)3^{d-3}a^{d-2}(b-a)^{-d+3}e^{-\tilde c_1(d,\beta)((b-a)/3)^{d+1+2\beta}}\\
&\qquad\qquad+\tilde c'_2(d,\beta,c_0)3^{-1}(2b+a)^{d-2}(b-a)^{-d+3}e^{-\tilde c'_1(d,\beta)((b-a)/3)^{d+1+2\beta}}\\
&\leq \tilde c_2(d,\beta,c_0)3^{d-3}a^{d-2}(b-a)^{-d+3}e^{-\tilde c_1(d,\beta)((b-a)/3)^{d+1+2\beta}}\\
&\qquad\qquad+\tilde c'_2(d,\beta,c_0)3^{-1}(3a+2c_0a)^{d-2}(b-a)^{-d+3}e^{-\tilde c'_1(d,\beta)((b-a)/3)^{d+1+2\beta}}\\
&\leq \tilde c_7(d,\beta,c_0)a^{d-2}(b-a)^{-d+3}e^{-\tilde c_6(d,\beta)(b-a)^{d+1+2\beta}}.
\end{align*}

\item If $\xi=\eta^{\prime}_{\beta}$ with $\beta>(d+1)/2$ we apply Lemma \ref{lm:4_4} with $R=a$ and $r=(b-a)/3$ and Lemma \ref{lm:4_5} with $R'=b$ and $r'=(b-a)/3$. Then, as before, for $b-a>c_0a$ we have
\begin{align*}
\mathscr{B}(a,b)&\leq c_8(d,\beta, c_0)(b-a)^{-(2\beta-d-1)}.
\end{align*}
From this we deduce that
$$
\lim\limits_{b\to\infty}\mathscr{B}(a,b)=\lim_{b\to\infty}\tilde c_8(d,\beta,c_0)b^{-(2\beta-1-d)}=0,
$$
for any fixed $a>0$, which proves that for any $\beta>(d+1)/2$ the $\beta'$-Delaunay tessellation is absolutely regular. In addition, if $b-a\leq c_0a$ then
\begin{align*}
\mathscr{B}(a,b)&\leq \tilde c_3(d,\beta,c_0)3^{2\beta-3}a^{d-2}(b-a)^{-2\beta+3}+\tilde c_3'(d,\beta,c_0)3^{2\beta-3}(2/3b+1/3a)^{d-2}(b-a)^{-2\beta+3}\\
&\leq \tilde c_8(d,\beta,c_0)a^{d-2}(b-a)^{-2\beta+3},
\end{align*}
where we used the fact that $b\leq a(1+c_0)$.

\item Finally, if $\xi=\zeta$ we again use Lemma \ref{lm:4_4} with $R=a$ and $r=(b-a)/3$ and Lemma \ref{lm:4_5} with $R' = b$, $r'=(b-a)/3$. For $b-a>c_0a$ we obtain
\begin{align*}
\mathscr{B}(a,b)&\leq c_{10}(d,c_0)e^{-c_9(d)(b-a)^2}.
\end{align*}
Thus, $\lim_{b\to\infty}\mathscr{B}(a,b)=0$ for any $a>0$ and the Gaussian-Delaunay tessellation is absolutely regular.
For $b-a\leq c_0a$ and $b-a\ge 1$ be get
\begin{align*}
\mathscr{B}(a,b)&\leq \tilde c_{10}(d,c_0)a^{d-2}(b-a)^{-d+2}e^{-\tilde c_9(d)(b-a)^2},
\end{align*}
where we applied the same inequalities as in the previous computations.
\end{enumerate}
The proof is thus complete.
\end{proof}

\section{Central limit theorems}\label{sec:CLTs}

The bounds on the absolute regularity coefficients established in the previous section can be used to deduce central limit theorems for combinatorial and geometric parameters of $\beta$- and Gaussian Delaunay tessellations; for the $\beta'$-Delaunay tessellation see Remark \ref{rem:CLTbetaPrime} below. We start with the  number of $k$-dimensional faces. To be formal, let $c(\,\cdot\,)$ be a so-called centre function which associates with a (potentially lower-dimensional) simplex $S\subset\RR^{d-1}$ a point $c(S)\in\RR^{d-1}$, the centre of $S$, in such a way that $c(S+v)=c(S)$ for all $v\in\RR^{d-1}$. There are many choices for such centre functions, for example $c(S)$ could be the centre of gravity of $S$ or the centre of the smallest ball containing $S$. For us it will turn out to be convenient to take for $c(S)$ the lexicographically smallest vertex of $S$. In what follows we understand $c(S)$ in exactly this way. Now, for a stationary random tessellation $\cT$ (identified with its skeleton, that is, the random closed set generated by the union of all cell boundaries) in $\RR^{d-1}$ denote by $X_{\cT,k}$ the stationary point process formed by all centres (that is, lexicographically smallest vertices) of $k$-dimensional faces of $\cT$, which are almost surely simplices of dimension $k$. In particular, for sets $B\subset\RR^{d-1}$, $X_{\cT,0}(B)$ counts the number of vertices of $\cT$ in $B$, $X_{\cT,1}(B)$ is the number of lexicographically smallest edge endpoints  in $B$ of edges of $\cT$ and $X_{\cT,d-1}(B)$ is the same as the number of cells of $\cT$ whose lexicographically smallest vertex is located in $B$.

In what follows we shall use the notation $I_n:=[-n,n]^{d-1}$ for $n\in\NN$, and let $\lambda_{\cT,k}:=\EE X_{\cT,k}([0,1]^{d-1})$ be the intensity of the stationary point process $X_{\cT,k}$. We note that $\EE X_{\cT,k}(I_n)=(2n)^{d-1}\lambda_{\cT,k}$. For the $\beta$- (and $\beta'$-) Delaunay tessellation the explicit values of $\lambda_{\widetilde{\cD}_\beta,k}$ (and $\lambda_{\widetilde{\cD}_\beta',k}$) were determined in \cite[Theorem 6.4]{GKT1} and for the Gaussian-Delaunay tessellation see \cite[Corollary 5.9]{GKT2}.

\begin{theorem}\label{thm:CLT}
Let $\cT$ be one of the tessellations $\widetilde{\cD}_\beta=\cD(\eta_\beta)$ for $\beta>-1$ or $\widetilde{\cD}=\cD(\zeta)$. Further, fix $k\in\{0,1,\ldots,d-1\}$. Then the limit
\begin{equation}\label{eq:CLTVariance}
s_k^2 := \lim_{n\to\infty}(2n)^{-(d-1)}\Var X_{\cT,k}(I_n)
\end{equation}
exists and is strictly positive. Moreover, we have the convergence in distribution
\begin{equation}\label{eq:CLTConvergence}
{X_{\cT,k}(I_n)-(2n)^{d-1}\lambda_{\cT,k}\over (2n)^{d-1\over 2}}\overset{d}{\longrightarrow} G,
\end{equation}
as $n\to\infty$, where $G\sim\mathcal{N}(0,s_k^2)$ is a centred Gaussian random variable with variance $s_k^2$.
\end{theorem}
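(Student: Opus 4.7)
The strategy is to discretise and invoke the central limit theorem of Heinrich \cite{Heinrich94,HeinrichBook} for stationary absolutely regular random fields on $\ZZ^{d-1}$. For each $z\in\ZZ^{d-1}$ set
$$
Y_z := X_{\cT,k}(z+[0,1)^{d-1}),
$$
so that $(Y_z)_{z\in\ZZ^{d-1}}$ is a stationary real-valued random field and $X_{\cT,k}(I_n) = \sum_{z\in V_n}Y_z$ for $V_n:=\{-n,\ldots,n-1\}^{d-1}$. Heinrich's theorem requires three inputs: (a) a sufficiently fast decay of the absolute regularity coefficients $\beta_Y(\rho)$ of $(Y_z)$, (b) a moment bound $\EE|Y_0|^{2+\delta}<\infty$ for some $\delta>0$, and (c) strict positivity of the asymptotic variance $s_k^2$.

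For (a), the $\sigma$-algebras generated by $(Y_z)_{z\in A}$ and $(Y_z)_{z\in B}$ with $A,B\subset\ZZ^{d-1}$ at Euclidean distance $\rho$ are contained, after inflating each set by a unit amount, in the $\sigma$-algebras of $\cT$ restricted to two regions separated by a distance of order $\rho$. By monotonicity $\beta_Y(\rho)\leq \mathscr{B}(a,b)$ with $b-a$ of order $\rho$, so Theorem \ref{thm:BetaEstimates}(i) and (iii) deliver stretched-exponential respectively Gaussian decay of $\beta_Y$, vastly exceeding the rate required by Heinrich. For (b), $Y_0$ is deterministically bounded by a polynomial in the number $N_0$ of points of $\xi$ whose spatial coordinates lie in $B_{R_0}$ and whose heights fall below the upper envelope of $\Psi(\xi)$ over $B_{R_0}$, with $R_0$ denoting the radius of stabilisation of $\cT$ at the origin. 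Lemmas \ref{lm:4_3} and \ref{lm:4_4} provide stretched-exponential (respectively Gaussian) tails for both $R_0$ and the relevant height window, so all moments of $Y_0$ are finite.

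The only genuine difficulty is (c), which I intend to establish by a local perturbation argument. Fix an integer $\ell$ so large that $\beta_Y(\ell)$ is smaller than a preassigned small constant (possible by (a)). Partition $V_L$ into disjoint blocks $Q_1,\ldots,Q_M$ of side length $\ell$ separated by buffer zones of width $\ell$, so that $M\asymp L^{d-1}/(2\ell)^{d-1}$ and the $\sigma$-algebras associated with different blocks are, up to an error controlled by $\beta_Y(\ell)$, independent. Inside each block one exhibits an event $A_j$, depending only on the restriction of $\xi$ to a small sub-box of $Q_j\times\RR$, that occurs with probability bounded uniformly away from zero and one and that deterministically alters $\sum_{z\in Q_j}Y_z$ by a non-zero integer amount: such an event is constructed by introducing or deleting a single, well-placed point of $\xi$ close to the paraboloid envelope, thereby creating or destroying at least one $k$-face whose centre falls in $Q_j$. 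It follows that the variance of each block sum is bounded below by some $c_\ell>0$, and combining this with the near-independence yields
$$
\Var X_{\cT,k}(I_L)\geq c_\ell' M \asymp c_\ell'(2\ell)^{-(d-1)}L^{d-1}
$$
for some $c_\ell'>0$ and all sufficiently large $L$. Dividing by $(2L)^{d-1}$ and letting $L\to\infty$ gives $s_k^2\geq c_\ell'/(4\ell)^{d-1}>0$. The existence of the limit $s_k^2$ in \eqref{eq:CLTVariance} is itself standard under absolute regularity with summable mixing coefficients, and the convergence in distribution \eqref{eq:CLTConvergence} then follows directly from Heinrich's theorem.
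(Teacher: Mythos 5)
Your overall architecture coincides with the paper's: discretise, verify Heinrich's moment and mixing conditions using Theorem \ref{thm:BetaEstimates} and a moment lemma, and prove $s_k^2>0$ by a local perturbation. Steps (a) and (b) are essentially sound; for (b) you take a slightly different route than the paper (which bounds the number of $k$-faces by the number of vertices times $\sup_v f_k(v)$ and imports moment bounds on $f_k(v)$ from Calka--Schreiber--Yukich and Calka--Yukich), whereas you bound everything by a polynomial in the number of Poisson points in a stabilisation window; that works, given the tail bounds of Lemmas \ref{lm:4_3} and \ref{lm:4_4} and a Cauchy--Schwarz step you leave implicit.

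The genuine gap is in step (c). From ``each block sum $S_j$ has variance at least $c_\ell$'' and ``the blocks are nearly independent in the absolutely regular sense'' you cannot conclude $\Var\bigl(\sum_j S_j\bigr)\geq c'_\ell M$. Writing $\Var\bigl(\sum_j S_j\bigr)=\sum_j\Var(S_j)+\sum_{i\neq j}\mathrm{Cov}(S_i,S_j)$, absolute regularity only gives $|\mathrm{Cov}(S_i,S_j)|\ll\|S_i\|_{2+\delta}\|S_j\|_{2+\delta}\,\beta_Y(\rho_{ij})^{\delta/(2+\delta)}$, and since $\|S_j\|_{2+\delta}$ grows with $\ell^{d-1}$ the covariance between neighbouring blocks can be of the same order as, and of opposite sign to, the individual block variances; weak dependence does not exclude cancellation (the buffer zones you discard contribute negatively correlated mass). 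The paper circumvents this entirely by a conditioning argument: it introduces the $\sigma$-algebra $\cG$ generated by all Poisson points outside the designated perturbation sets $K(m)$, uses $\Var F\geq\EE\,\Var(F\mid\cG)$, and then applies the Fock-space (first-chaos) lower bound $\EE\,\Var(F\mid\cG)\geq\EE\int\bigl(\EE[D_xF\mid\cG]\bigr)^2\,\mu(\dint x)$, under which the contributions of the disjoint sets $Z(m)$ add exactly, with no cross terms, because the restrictions of $\xi$ to the $K(m)$ are conditionally independent given $\cG$. Your argument would be repaired by replacing ``near-independence of blocks'' with this exact conditional independence. A secondary point you gloss over, and on which the paper spends most of Lemma \ref{lem:sk>0}: it is not enough that the well-placed extra point ``creates or destroys at least one $k$-face''---one must rule out compensating changes elsewhere (the paper's geometric argument with the balls $B_{b+1/4}(m)$ shows that no facet other than the designated one can be affected by a point added in $Z(m)$), so that the net change $D_{(v,h)}$ is bounded below by $1$ rather than merely being nonzero ``somewhere''.
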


\begin{remark}
The central limit theorem in Theorem \ref{thm:CLT} continues to holds if the sequence of cubes $I_n$ is replaced by an arbitrary sequence of compact convex sets $W_n\subset\RR^{d-1}$ with interior points having the property that $r(W_n)\to\infty$, as $n\to\infty$, where $r(W_n)$ denotes the inradius of $W_n$. The same comment applies to Theorem \ref{thm:CLTkvolume} below.
\end{remark}

We prepare the proof of Theorem \ref{thm:CLT} with the following auxiliary result, which is more of technical nature.

\begin{lemma}\label{lem:MomentsForCLT}
	For any $k\in\{0,1,\ldots,d-1\}$ the random variables $X_{\widetilde{\cD}_\beta,k}(I_1)$ and $X_{\widetilde{\cD},k}(I_1)$ have finite moments of all orders.
\end{lemma}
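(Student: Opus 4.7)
The plan is to combine the fine stabilization estimates of Lemma~\ref{lm:4_4} with the height control of Lemma~\ref{lm:4_3} in order to dominate $X := X_{\cT, k}(I_1)$, off a super-polynomially small exceptional event, by a polynomial function of a Poisson count on a bounded set; the fast decay of the exceptional event will then deliver finite moments of all orders.

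Write $\xi$ for the Poisson process ($\eta_\beta$ or $\zeta$) underlying $\cT$. Since $I_1 \subset B_{\sqrt{d-1}}$, every $k$-face of $\cT$ with centre in $I_1$ is incident to a cell of $\cT$ meeting $B_{\sqrt{d-1}}$, and hence its $k+1$ vertices are spatial projections of particles of $\xi$ that determine $\cT \cap B_{\sqrt{d-1}}$. For an integer $r \geq 1$ I introduce the joint event
\[
G_r := \bigl\{\cT \cap B_{\sqrt{d-1}} \text{ is determined by particles with } v \in B_{\sqrt{d-1}+r}\bigr\} \cap \Bigl\{\sup_{(v,h)\in\partial\Psi(\xi),\, v\in B_{\sqrt{d-1}+r}} h \leq T_r\Bigr\},
\]
for a height threshold $T_r$ to be chosen. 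On $G_r$, every $k$-face with centre in $I_1$ has its $k+1$ vertices among the points of $\xi$ in $B_{\sqrt{d-1}+r} \times J_{T_r}$, with $J_{T_r} = [0, T_r]$ in the $\beta$-case and $J_{T_r} = (-\infty, T_r]$ in the Gaussian case; hence $X \leq \binom{N_r}{k+1}\leq N_r^{k+1}$ for the corresponding Poisson count $N_r$. Setting $R^{\ast} := \min\{r \geq 1 : G_r \text{ holds}\}$ (a.s.\ finite), I split $\EE X^p = \sum_{r \geq 1} \EE[X^p \mathbf{1}_{R^{\ast} = r}]$ and apply Cauchy--Schwarz to obtain
\[
\EE[X^p \mathbf{1}_{R^{\ast} = r}] \leq \bigl(\EE N_r^{2p(k+1)}\bigr)^{1/2} \PP(R^{\ast} \geq r)^{1/2}.
\]

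For $\cT = \widetilde{\cD}_\beta$, take $T_r = 8(\sqrt{d-1}+r)^2$; Lemma~\ref{lm:4_3}(1a) and Lemma~\ref{lm:4_4}(1) then give $\PP(G_r^c) \ll r \exp(-c\, r^{d+1+2\beta})$, while $\EE N_r \ll r^{d+1+2\beta}$. The Poisson moment estimate $\EE N_r^m \ll (1+\EE N_r)^m$ combined with the displayed inequality produces a series of the form $r^{p(k+1)(d+1+2\beta)}\exp(-cr^{d+1+2\beta}/2)$, which is summable, so $\EE X^p < \infty$ for every $p$.

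For $\cT = \widetilde{\cD}$ the Gaussian intensity $\propto e^{h/2}$ forces $T_r$ to grow at least like $4(\sqrt{d-1}+r)^2$ for $\PP(H_r^c)$ to be small, which blows $\EE N_r$ up to $e^{\Theta(r^2)}$; the crude Cauchy--Schwarz scheme then yields only finitely many moments. The resolution is to replace $N_r$ by the sharper count $E_r$ of extreme points of $\Psi(\zeta)$ with spatial projection in $B_{\sqrt{d-1}+r}$, and to show that $E_r$ has moments of polynomial order in $r$ via the multivariate Mecke--Slivnyak formula together with the paraboloid non-inclusion condition characterising extremality, whose probability compensates for the exponential intensity. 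The stretched-exponential stabilization tail $e^{-cr^2}$ from Lemma~\ref{lm:4_4}(3) then dominates this polynomial growth and closes the argument. Producing the polynomial moment bound on $E_r$ in the Gaussian setting is the principal technical obstacle.
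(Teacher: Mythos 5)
Your argument for the $\beta$-Delaunay case is sound and takes a genuinely different route from the paper: you combine the stabilization radius of Lemma~\ref{lm:4_4} with the height bound of Lemma~\ref{lm:4_3} to dominate $X_{\widetilde{\cD}_\beta,k}(I_1)$ by $N_r^{k+1}$ on a good event, and the stretched-exponential tail $e^{-cr^{d+1+2\beta}}$ (with $d+1+2\beta>d-1\geq 1$) beats the polynomial growth $r^{p(k+1)(d+1+2\beta)}$ of the Poisson moments, so every moment is finite. This has the merit of being self-contained, whereas the paper imports the moment bounds on the number of $k$-faces adjacent to a vertex from external sources (\cite[Lemma 7.1]{CalkaSchreiberYukich} and \cite[Lemma 4.4]{CalkaYukichGaussian}).

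However, for the Gaussian-Delaunay tessellation your proof has a genuine gap, which you yourself flag: once the spatial ball is inflated to $B_{\sqrt{d-1}+r}$, the height threshold must satisfy $T_r\gtrsim 4(\sqrt{d-1}+r)^2$ for the event $\{\sup h\leq T_r\}$ to be likely, so $\EE N_r\asymp e^{\Theta(r^2)}$ with a constant independent of $p$, while the stabilization tail is $e^{-c_4r^2}$ with a \emph{fixed} $c_4$; for $p$ large the Cauchy--Schwarz product diverges. Your proposed repair (polynomial moment bounds for the number of extreme points via the Mecke formula) is precisely the hard part and is not carried out, so the Gaussian half of the lemma is not proved. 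The paper sidesteps this difficulty entirely by observing that for $k=0$ no stabilization is needed: a vertex of the tessellation in $B_R$ is the spatial projection of an extreme point whose spatial coordinate already lies in the \emph{fixed} ball $B_R$, so $X_{\widetilde{\cD},0}(B_R)\leq\zeta(B_R\times(-\infty,H])$ with $H=\sup_{(v,h)\in\partial\Psi(\zeta),\,v\in B_R}h$. By Lemma~\ref{lm:4_3}(1c) this $H$ has \emph{doubly} exponential tails, whence $\EE e^{mH/2}<\infty$ for every $m$ and all moments of the Poisson count follow; the case $k\geq 1$ is then reduced to $k=0$ via $X_{\cT,k}\leq\sum_v f_k(v)$, Cauchy--Schwarz, and the cited bounds on $\EE f_k(v)^{2m}$. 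If you want to salvage your scheme in the Gaussian case, you should either adopt this fixed-ball reduction for the vertex count or actually establish the polynomial moment bound on the number of extreme points over $B_{\sqrt{d-1}+r}$, neither of which is present in your write-up.
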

\begin{proof}
	Fix some $R>0$. We start by proving that for any $m\in\NN$,
	$$
	\EE X_{\widetilde{\cD}_\beta,0}(B_R)^m<\infty\qquad\text{and}\qquad\EE X_{\widetilde{\cD},0}(B_R)^m<\infty.
	$$
	By construction of the tessellations $\widetilde{\cD}_\beta$ and $\widetilde{\cD}$, $X_{\widetilde{\cD}_\beta,0}(B_R)$ is the same as the number of vertices in $B_R\times\RR_+$ of the paraboloid hull process associated with the Poisson point process $\eta_\beta$ and $X_{\widetilde{\cD},0}(B_R)$ is the number of vertices in $B_R\times\RR$ of the paraboloid hull process generated by $\zeta$.
	
	Let $H_\beta>0$ be the height of the cylinder in $\RR^{d-1}\times\RR_+$ with base $B_R$ which contains all vertices of $\Phi(\eta_\beta)\cap (B_R\times\RR_+)$. Similarly, let $H\in\RR$ be the height of the cylinder $\RR^{d-1}\times\RR$ with base $B_R$ which contains all vertices of $\Phi(\zeta)\cap (B_R\times\RR)$. Then the two inequalities
	$$
	X_{\widetilde{\cD}_\beta,0}(B_R) \leq \eta_\beta(B_R\times[0,H_\beta])\qquad\text{and}\qquad
	X_{\widetilde{\cD}_\beta,0}(B_R) \leq \zeta(B_R\times(-\infty,H])
	$$
	hold with probability one. Note that the dominating random variables are Poisson distributed with parameter $c_{d,\beta,R}\int_0^{H_\beta}h^\beta\,\dint h$ and $c_{d,R}\int_{-\infty}^He^{h/2}\,\dint h$, respectively, where $c_{d,\beta,R}>0$ is some constant only depending on $d,\beta,R$ and $c_{d,R}>0$ is a constant only depending on $d$ and $R$; the explicit values are not relevant for our purposes. As a consequence,
	$$
	\EE X_{\widetilde{\cD}_\beta,0}(B_R)^m \leq c_{d,\beta,R,m}\,\EE\Big(\int_0^{H_\beta}h^\beta\,\dint h\Big)^m \leq \hat c_{d,\beta,R,m}\,\EE[H_\beta^{(\beta+1)m}]
	$$
	and
	$$
	\EE X_{\widetilde{\cD},0}(B_R)^m \leq c_{d,R,m}\,\EE\Big(\int_{-\infty}^He^{h/2}\,\dint h\Big)^m \leq \hat c_{d,R,m}\,\EE[e^{Hm/2}]
	$$
	for constants $c_{d,\beta,R,m},\hat c_{d,\beta,R,m}>0$ and $c_{d,R,m},\hat c_{d,R,m}>0$ depending only on $d,\beta,R$ and $m$ or $d,R$ and $m$, respectively. However, according to Lemma \ref{lm:4_3} the random variables $H_\beta$ have sub-exponential tails, which especially implies that $\EE H_\beta^p<\infty$ for any $p>0$, see \cite[Proposition 2.7.1]{VershyninBook}. Moreover, by the same lemma, the random variable $H$ has double-exponential tails, which yields that $\EE e^{p H}<\infty$ for any $p\in\RR$. We thus conclude that
	\begin{align}\label{eq:MomBdPrf0}
	\EE X_{\widetilde{\cD}_\beta,0}(B_R)^m < \infty \qquad\text{and}\qquad\EE X_{\widetilde{\cD},0}(B_R)^m < \infty
	\end{align}
	for any $m\in\NN$. This readily implies the result for $k=0$.
	
	For $k\in\{1,\ldots,d-1\}$ we put $\varrho:=\sqrt{d-1}$ and start by observing that
	$$
		\EE X_{\widetilde{\cD}_\beta,k}(I_1)^m \leq \EE X_{\widetilde{\cD}_\beta,k}(B_\varrho)^m\qquad\text{and}\qquad	\EE X_{\widetilde{\cD},k}(I_1)^m \leq \EE X_{\widetilde{\cD},k}(B_\varrho)^m
	$$
	Next, we observe that $X_{\widetilde{\cD}_\beta,k}(B_\varrho)$ is clearly dominated by summing over all vertices of $\widetilde{\cD}_\beta$ in $B_\varrho$ the number of adjacent $k$-faces, and similarly for the Gaussian-Delaunay tessellation $\widetilde{\cD}$. Formally, for a vertex $v$ of a tessellation $\cT$ we write $f_k(v)$ for the the number of $k$-faces of $\cT$ that are adjacent to $v$, meaning that they have $v$ as a vertex. We also define $f_k(v)=0$ if $v\in\RR^{d-1}$ is not a vertex of $\cT$. Then, almost surely it holds that
	$$
	X_{\widetilde{\cD}_\beta,k}(B_\varrho) \leq \sum_{v\in X_{\widetilde{\cD}_\beta,0}(B_\varrho)}f_k(v)\qquad\text{and}\qquad X_{\widetilde{\cD},k}(B_\varrho) \leq \sum_{v\in X_{\widetilde{\cD},0}(B_\varrho)}f_k(v)
	$$
	and we conclude, using the Cauchy-Schwarz inequality, that
	\begin{align}
	\nonumber\EE X_{\widetilde{\cD}_\beta,k}(B_\varrho)^m &\leq \EE\Big(\sum_{v\in X_{\widetilde{\cD}_\beta,0}(B_\varrho)}f_k(v)\Big)^m\\
	&\nonumber\leq\EE\big(X_{\widetilde{\cD}_\beta,0}(B_\varrho)\sup_{v\in B_\varrho}f_k(v)\big)^m\\
	&\nonumber\leq\EE\big(X_{\widetilde{\cD}_\beta,0}(B_\varrho)\sup_{v\in \RR^{d-1}}f_k(v)\big)^m\\
	&\leq\sqrt{\EE X_{\widetilde{\cD}_\beta,0}(B_\varrho)^{2m}}\sqrt{\sup_{v\in \RR^{d-1}}\EE f_k(v)^{2m}}.\label{eq:19-01-21a}
	\end{align}
	The first term in the last line is finite according to \eqref{eq:MomBdPrf0} with $R=\varrho$ there, and the second term is finite by the first part of \cite[Lemma 7.1]{CalkaSchreiberYukich}, implying altogether that $\EE X_{\widetilde{\cD}_\beta,k}(I_1)^m\leq\EE X_{\widetilde{\cD}_\beta,k}(B_\varrho)^m<\infty$ (the result from \cite{CalkaSchreiberYukich} we used actually deals with the number of $k$-dimensional paraboloid faces of the paraboloid hull process, but by definition of the $\beta$-Delaunay tessellation there is a one-to-one correspondence between these faces and the $k$-faces of the tessellation $\widetilde{\cD}_\beta$). Similarly, for the Gaussian-Delaunay tessellation we have that
	$$
	\EE X_{\widetilde{\cD},k}(I_1)^m\leq\EE X_{\widetilde{\cD},k}(B_\varrho)^m \leq \sqrt{\EE X_{\widetilde{\cD},0}(B_\varrho)^{2m}}\sqrt{\sup_{v\in \RR^{d-1}}\EE f_k(v)^{2m}}.
	$$
	In this situation the first term is finite by \eqref{eq:MomBdPrf0} again with $R=\varrho$ there, while the second one is finite by \cite[Lemma 4.4]{CalkaYukichGaussian}, applied with $\lambda=\infty$ (again subject to the translation between paraboloid $k$-faces of the paraboloid hull process and $k$-faces of the tessellation $\widetilde{\cD}$).
\end{proof}

In the next lemma we will show that the asymptotic variances $s_k^2$, defined in \eqref{eq:CLTVariance} are strictly positive. Our proof is inspired by similar results for random polytopes, first established in \cite{ReitznerCLT}.

\begin{lemma}\label{lem:sk>0}
Under conditions of Theorem \ref{thm:CLT} we have that $s_k^2>0$ for all $k\in\{0,1,\ldots,d-1\}$.
\end{lemma}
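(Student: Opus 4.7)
\emph{Proof plan.} The strategy follows the classical Reitzner-style variance lower bound for random polytopes, adapted to the Poisson setting. The plan is to partition $I_n$ into $m_n \asymp n^{d-1}$ disjoint ``blocks'', to construct inside each block two local perturbations of the Poisson process that occur with positive probability and yield different $k$-face counts, and to sum the resulting conditional variance contributions via a martingale decomposition with respect to a filtration that reveals the blocks one at a time.

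Concretely, fix a large constant $L > 0$ and a height threshold $h^* > 0$ (both depending on $d, \beta$) and pack $I_n$ with $m_n \asymp (n/L)^{d-1}$ disjoint closed cubes $Q_1, \ldots, Q_{m_n}$ of side $L/3$ centred on the grid $(L\cdot\ZZ)^{d-1}$. Writing $\widetilde Q_i \supset Q_i$ for the concentric cube of side $L$ (so that the $\widetilde Q_i$ remain pairwise disjoint), Lemmas~\ref{lm:4_3} and \ref{lm:4_4} show that, for $L$ and $h^*$ sufficiently large, the stabilization event
$$
E_i := \{\widetilde{\cD}(\xi) \cap Q_i \text{ is determined by the points of } \xi \text{ in } \widetilde Q_i \times [0, h^*]\}
$$
has probability $\geq 1 - \eta$ for prescribed small $\eta$; Lemma~\ref{lm:4_5} analogously guarantees that perturbations of $\xi$ inside $Q_i \times [0, h^*]$ do not affect $\widetilde{\cD}(\xi)$ outside $\widetilde Q_i$. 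For a reference cube, I would construct two finite deterministic configurations $\omega^0, \omega^1$ supported in its interior such that, on the stabilization event, the local tessellations they determine have different numbers $\mathcal{N}_0 \neq \mathcal{N}_1$ of $k$-faces with centre in the cube. A concrete realization: let $\omega^0$ consist of $d+1$ points in general position at carefully chosen heights so as to form a single $(d-1)$-simplex of $\widetilde{\cD}$, and let $\omega^1$ add one further ``deep'' interior point that forces a barycentric subdivision into $d$ simplices. Standard void/hit estimates for the Poisson process give that the events $A_i^j := \{\xi|_{Q_i \times [0, h^*]} \in U_i^j\}$, where $U_i^j$ is a small neighbourhood of the translate of $\omega^j$, have probability bounded below by some $c_A > 0$.

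Third, enumerate the cubes and set $\mathcal{F}_0 := \sigma(\xi|_{(\bigcup_i Q_i \times [0, h^*])^c})$ and $\mathcal{F}_i := \mathcal{F}_{i-1} \vee \sigma(\xi|_{Q_i \times [0, h^*]})$; then $X_{\cT, k}(I_n)$ is $\mathcal{F}_{m_n}$-measurable, and orthogonality of martingale increments gives
$$
\Var X_{\cT, k}(I_n) \geq \sum_{i=1}^{m_n} \EE\bigl[(\EE[X_{\cT, k}(I_n)|\mathcal{F}_i] - \EE[X_{\cT, k}(I_n)|\mathcal{F}_{i-1}])^2\bigr].
$$
On the intersection of the stabilization event $E_i$ with its outside counterpart (holding for each $i$ with probability bounded away from zero), the $i$-th increment reduces to the centred local count $X_{\cT, k}(Q_i) - \EE[X_{\cT, k}(Q_i) | \mathcal{F}_{i-1}]$, a function only of $\xi|_{Q_i \times [0, h^*]}$ and $\mathcal{F}_0$-measurable data. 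The perturbation events $A_i^0$ and $A_i^1$ force this local count to attain two values differing by at least $|\mathcal{N}_1 - \mathcal{N}_0| \geq 1$ each with positive conditional probability, so each increment has second moment bounded below by a universal positive constant $c$. Summing yields $\Var X_{\cT, k}(I_n) \geq c\cdot m_n \gtrsim n^{d-1}$, whence $s_k^2 > 0$.

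The main technical obstacle is the geometric construction in step two: verifying that $\omega^0$ and $\omega^1$ robustly produce different $k$-face counts under every admissible exterior configuration compatible with the stabilization event. This relies on the paraboloid hull characterization from Section~\ref{sec:Background}, combined with the freedom (afforded by Lemmas~\ref{lm:4_4} and \ref{lm:4_5}) to shift the reference heights so that the perturbing points become vertices of $\widetilde{\cD}$, decoupling the local perturbation from the background Poisson field. The Gaussian-Delaunay case is parallel, with $[0, h^*]$ replaced by $[-h^*, h^*]$ and the Gaussian variants of Lemmas~\ref{lm:4_3}, \ref{lm:4_4} and \ref{lm:4_5} invoked in place of their $\beta$-counterparts.
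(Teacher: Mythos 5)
Your overall strategy is the same Reitzner-inspired one the paper uses: seed $\asymp n^{d-1}$ well-separated local regions with configurations occurring with probability bounded away from zero, argue that a local perturbation changes the $k$-face count by at least one, and sum the contributions. The probabilistic bookkeeping differs: you use a martingale-increment decomposition over a filtration revealing the blocks one at a time, with two competing configurations $\omega^0,\omega^1$ per block, whereas the paper conditions on a $\sigma$-algebra $\cG$ (everything outside the designated regions), applies $\Var X\ge \EE\Var(X\mid\cG)$, and then lower-bounds the conditional variance via the Fock space inequality $\Var F\ge\int(\EE D_xF)^2\,\mu(\dint x)$, integrating the add-one-cost operator over a small cylinder $Z(m)$. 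Both decompositions are workable; yours is closer to Reitzner's original binomial argument, the paper's exploits the Poisson structure more directly and avoids having to control cross-block terms off the stabilization events.

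The genuine gap is the step you yourself flag as ``the main technical obstacle'' and then do not carry out: proving that the local configurations \emph{robustly} determine the local tessellation and yield different $k$-face counts under \emph{every} admissible exterior configuration. This is not a routine verification --- it is where essentially all of the work in the paper's proof lies, and stabilization in the sense of Lemmas \ref{lm:4_4} and \ref{lm:4_5} alone does not deliver it: those lemmas tell you which points can influence $\widetilde\cD(\xi)\cap Q_i$, but not that your planted points actually become vertices, nor that the perturbation changes the count by a nonzero amount rather than being absorbed. The paper resolves this by a bespoke geometric construction: the void region $K(m)$ is taken paraboloid-shaped, $K(m)=\{(v,h):h\le 3/32,\ v\in B_{b+\sqrt{3/32-h}}(m)\}$, precisely so that, conditional on the event $A(m,\xi)$, the downward paraboloid through the $d$ planted points is contained in $K(m)$ and hence empty --- guaranteeing $\conv(v_1,\dots,v_d)$ is a cell no matter what $\xi$ does outside. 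Then, to show that adding a point in the cylinder $Z(m)$ destroys \emph{only} this cell (so the difference operator is genuinely $\ge 1$ and cannot be cancelled by effects on neighbouring cells), the paper intersects any competing facet-paraboloid with the hyperplane $h=1/32$, obtaining a ball $B$, and uses a circumradius argument (choosing $b=3R-1/4$ with $R$ the circumradius of $\conv(x_1(m),\dots,x_{d-1}(m),m)$, and shrinking $\delta$) to force any such $B$ meeting $B_\delta(m)$ to contain one of the $B_\delta(x_i(m))$, contradicting extremality. Without an argument of this kind your claim that $|\mathcal N_1-\mathcal N_0|\ge1$ uniformly over the exterior is unsubstantiated, and the proof is incomplete. (A minor slip: a $(d-1)$-dimensional simplex in $\RR^{d-1}$ has $d$ vertices, not $d+1$; the paper plants $d$ points $x_1(m),\dots,x_d(m)$ on a sphere of radius $7/32$.)
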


\begin{proof}
We need to show that there exists a constant $c>0$, independent of $n$, such that
\begin{equation}\label{eq:13.07.21_1}
\Var X_{\cT,k}(I_n)\ge cn^{d-1}.
\end{equation}
Let $\xi$ be one of the processes $\eta_{\beta}$ or $\zeta$ and let $\mu$ denotes its intensity measure. In what follows $C$ will denote a positive constant, which only depends on $d$, $k$, $\xi$, but is independent of $n$. Its exact value might be different from case to case.

We will start by introducing an additional construction. Consider the following family of sets
$$
\cK:=\{K(m)\colon m\in\ZZ^{d-1}\cap I_n\},
$$
where
$$
K(m):=\{(v,h)\in\RR^d\colon h\leq 3/32, v\in B_{b+\sqrt{3/32-h}}(m)\}
$$
for some $b\ge 1/4$. Clearly, $\#\cK=Cn^{d-1}$. In each ball $B_{1/4}(m)$, $m\in\ZZ^{d-1}\cap I_n$, we choose $d$ points $x_1(m),\ldots, x_d(m)\in\partial B_{7/32}(m)$ and such that $\Delta(m):=\conv(x_1(m),\ldots, x_d(m))$ is a regular $(d-1)$-dimensional simplex, see Figure \ref{fig:Variance} (left). Let $A(m,\xi)$ denote the event that there is exactly one point of the process $\xi$ inside each of the sets $K_i(m):=B_{\delta}(x_i(m))\times [0,1/32]\subset K(m)$, $1\leq i\leq d$, where $0<\delta<1/32$ is some fixed value, and there are no other points of the process $\xi$ inside $K(m)$. Then due to the fact that the sets $K_i(m)$, $1\leq i\leq d$, and $K(m)\setminus \bigcup_{i=1}^{d} K_i(m)$ are disjoint and using the independence properties of Poisson point process we get
\begin{align*}
\PP(A(m,\xi))&=\PP\Big(\#\big(\xi\cap(K(m)\setminus \bigcup_{1\leq i\leq d} K_i(m))\big)=0\Big)\prod_{i=1}^{d}\PP\big(\#(\xi\cap K_i(m))=1\big)\\
&=\exp\Big(-\mu\big(K(m)\setminus \bigcup_{1\leq i\leq d} K_i(m)\big)\Big)\prod_{i=1}^d\exp\big(-\mu(K_i(m))\big)\mu(K_i(m))\\
&=e^{-\mu(K(0))}\,\mu(B_{\delta}\times [0,1/32])^d,
\end{align*}
where in the last step we used additivity property of the measure $\mu$ and the invariance of $\mu$ with respect to the shift along the spacial coordinate $v$.

For $\xi=\eta_{\beta}$ we get
\begin{align*}
\mu(B_{\delta}\times [0,1/32])={c_{d,\beta}\kappa_{d-1}\delta^{d+1}\over (\beta+1)32^{1+\beta}}>0,
\end{align*}
and since $\mu$ is locally finite measure there exists a constant $C(d,\beta,b,\delta)>0$ such that
\begin{equation}\label{eq:12.07.21_2}
\PP(A(m,\eta_{\beta}))>C(d,\beta,b,\delta).
\end{equation}
For $\xi=\zeta$ we obtain
\begin{align*}
\mu(B_{\delta}\times [0,1/32])&={2\kappa_{d-1}e^{1/32}\delta^{d+1}\over (2\pi)^{d/2}}>0,
\end{align*}
and, thus, there exists a constant $C(d,b,\delta)>0$ such that
\begin{equation}\label{eq:12.07.21_3}
\PP(A(m,\zeta))>C(d,b,\delta).
\end{equation}
Finally, inside of each set $K(m)$ we place a cylinder $Z(m):=B_{\delta}(m)\times[1/32, (1/4-\delta)^2-\delta^2]$.

In the next step we denote by $\cG$ the $\sigma$-algebra that defines the positions of all points of $\xi$ except for those, which are contained in the sets $K(m)$ with ${\bf 1}(A(m,\xi))=1$. Then
\begin{equation}\label{eq:13.07.21_2}
\Var X_{\cT,k}(I_n)=\EE\Var(X_{\cT,k}(I_n)|\cG)+\Var\EE(X_{\cT,k}(I_n)|\cG)\ge \EE\Var(X_{\cT,k}(I_n)|\cG).
\end{equation}

Let us consider the conditional random variable $(X_{\cT,k}(I_n)|\cG)$ as a functional of the Poisson point process $\xi$, where in case of $\cT=\widetilde{\cD}_{\beta}$ we take $\xi=\eta_{\beta}$ and in case of $\cT=\widetilde{\cD}$ we consider $\xi=\zeta$. More formally let $\sfN(\RR^d)$ denote the space of $\sigma$-finite counting measures on $\RR^d$ and let $\xi$ be a Poisson point process on $\RR^d$ with distribution $\PP_{\xi}$ (which is a probability measure on $\sfN(\RR^d)$) and intensity measure $\mu$. We say that a random variable $F$ is a Poisson functional if $F=f(\xi)$ almost surely, where $f: \sfN(\RR^d)\mapsto\RR$ is some measurable function called a representative of $F$. Given a point $x\in\RR^d$ define the difference operator or add-one-cost operator  by
$$
D_{x}F:=f(\xi+\delta_x)-f(\xi).
$$
Applying the Fock space representation \cite[Theorem 18.6]{LP} to the Poisson functional $(X_{\cT,k}(I_n)|\cG)$ we get
\begin{align*}
\EE\Var (X_{\cT,k}(I_n)|\cG)\ge \EE\int_{\RR^d}\Big(\EE\big[D_x X_{\cT,k}(I_n)|\cG\big]\Big)^2\mu(\dint x).
\end{align*}
Since $\xi$ represents one of the processes $\eta_{\beta}$ or $\zeta$ we have that $\mu(\dint x)=f(h)\dint v\dint h$, where $f(h):\RR\mapsto [0,\infty)$ is given by \eqref{eq:intensityBeta} and \eqref{eq:intensityGaussian} respectively. Thus,
\begin{align*}
&\EE\Var (X_{\cT,k}(I_n)|\cG) \\
&\ge \EE\int_{\RR^{d-1}}\int_{\RR}\Big(\EE\big[D_{(v,h)} (X_{\cT,k}(I_n)|\cG)\big]\Big)^2f(h) \dint h\,\dint v\\
&\ge \EE\sum_{K(m)\in\cK\colon {\bf 1}(A(m))=1}\int_{B_{\delta}(m)}\int_{1/32}^{(1/4-\delta)^2-\delta^2}\Big(\EE\big[D_{(v,h)} (X_{\cT,k}(I_n)|\cG)\big]\Big)^2f(h)\dint h\,\dint v.
\end{align*}
Let us analyse the realization of the process $\xi$ inside the set $K(m)$ with ${\bf 1}(A(m))=1$. Let $(v_i,h_i):=\xi\cap K_i(m)$, $1\leq i\leq d$, be the single points that $\xi$ puts into the sets $K_i(m)$. Then $\conv(v_1,\ldots,v_d)\in \cT$, since due to construction of the sets $K(m)$, $K_i(m)$ and, conditioning on $A(m)$, the set $\Pi^{\downarrow}((v_1,h_1),\ldots, (v_d,h_d))$ is included in $K(m)$ and, hence, it cannot contain any other point of the process $\xi$. Further we note that
$$
Z(m)\subset \Big(\Pi^{\downarrow}_{(m, (1/4-\delta)^2)}\cap \big(\RR^{d-1}\times [0,\infty)\big)\Big)\subset \Pi^{\downarrow}((v_1,h_1),\ldots, (v_d,h_d)),
$$
where we recall that $Z(m)$ stands for the cylinder $B_\delta(m)\times[1/32,(1/4-\delta)^2-\delta^2]$.
The later holds, since
$$
\Pi^{\downarrow}_{(m, (1/4-\delta)^2)}\cap \RR^{d-1}=B_{1/4-\delta}(m)\subset \Pi^{\downarrow}((v_1,h_1),\ldots, (v_d,h_d))\cap \RR^{d-1},
$$
and, hence, adding a point $(v,h)\in Z(m)$ to the process $\xi$ will 'destroy' a cell $\conv(v_1,\ldots,v_d)$.

\begin{figure}[t]
    \centering
\begin{tikzpicture}
	\clip (-10.5,-2.5) rectangle (4.5,2.5);
	\node at (-6,-2.5) {\includegraphics[width=0.6\textwidth]{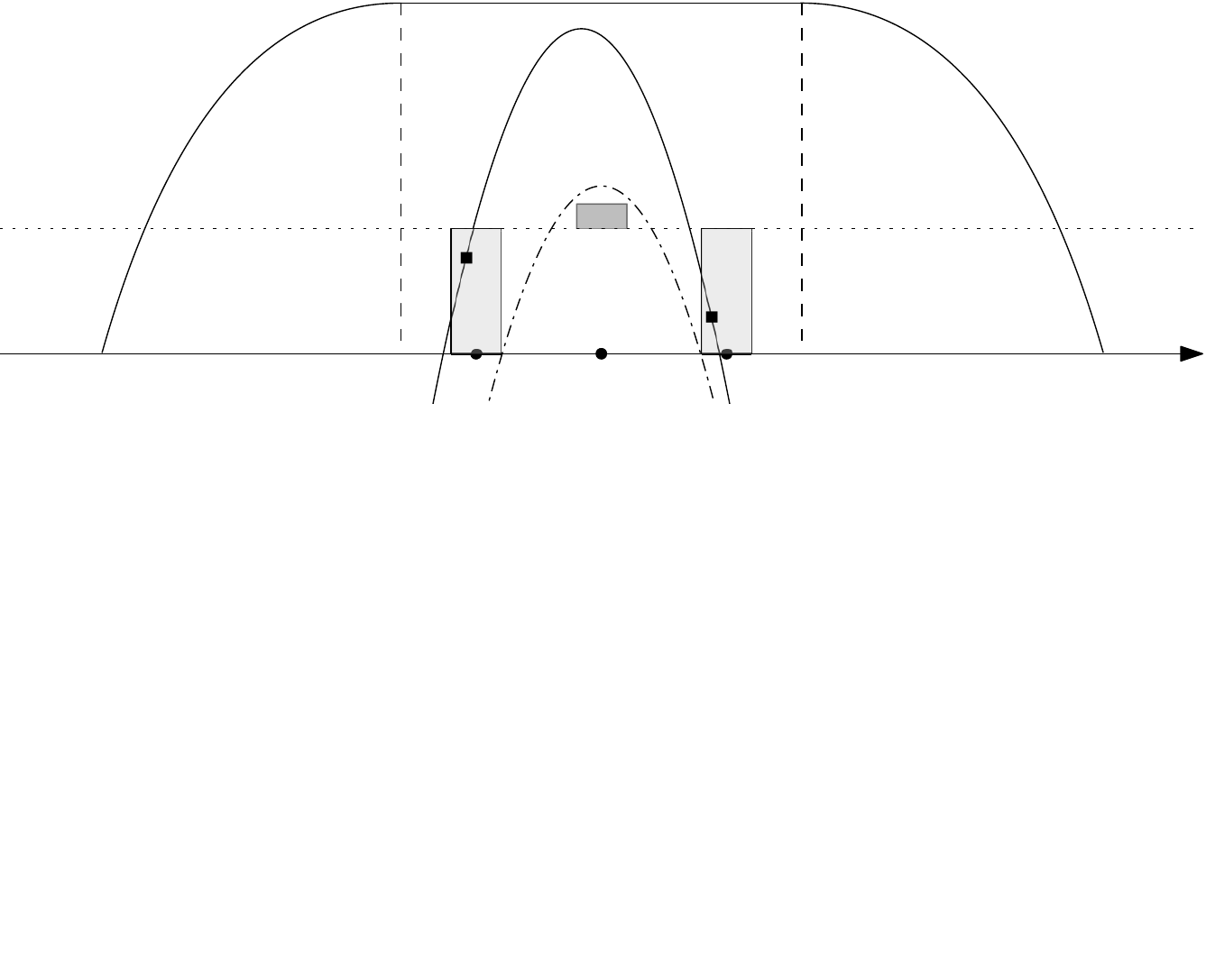}};
	\node at (2,0) {\includegraphics[width=0.3\textwidth]{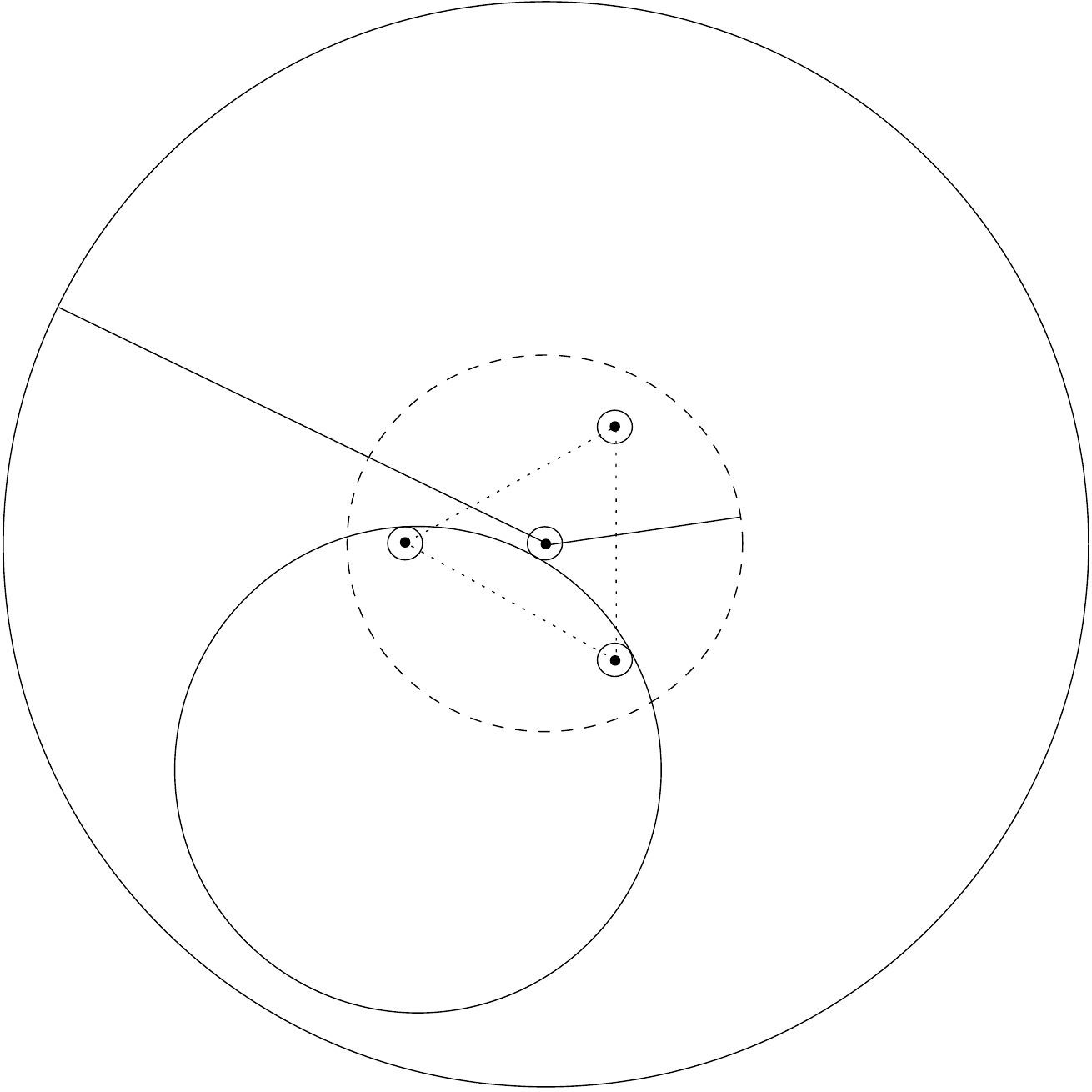}};
	\node at (-1.5,-1.7) {\tiny $\RR^{d-1}$};
	\node at (-1.5,-0.2) {\tiny $1\over 32$};
	\node at (-6,-1.6) {\tiny $m$};
	\node at (-5,1.6) {\tiny $K(m)$};
	
	\draw[->] (-5.9,-1.9) -- (-5,-1.9);
	\draw[->] (-6.1,-1.9) -- (-7,-1.9);	
	\node at (-5.5,-1.8) {\tiny $b$};
	\node at (-6.5,-1.8) {\tiny $b$};
	
	\draw[->] (-4.98,-2.2) -- (-4.8,-2.2);
	\node at (-4.9,-2.075) {\tiny $\delta$};
	\draw[->] (-5.02,-2.2) -- (-5.2,-2.2);
	\node at (-5.1,-2.075) {\tiny $\delta$};
	
	\draw[->] (-6.98,-2.2) -- (-6.8,-2.2);
	\node at (-6.9,-2.075) {\tiny $\delta$};
	\draw[->] (-7.02,-2.2) -- (-7.2,-2.2);
	\node at (-7.1,-2.075) {\tiny $\delta$};
	
	\node at (-6.1,0.4) {\tiny $Z(m)$};
	\draw (-6.2,0.3) -- (-6,-0.3);
	
	\node at (-6.8,-1.55) {\tiny $x_1(m)$};
	\node at (-4.8,-1.55) {\tiny $x_2(m)$};
	
	\node at (-8.1,-0.55) {\tiny $K_1(m)$};
	\draw (-8.1,-0.7) -- (-7,-1);
	\node at (-3.9,-0.55) {\tiny $K_2(m)$};
	\draw (-3.9,-0.7) -- (-5,-0.8);
	
	\node at (1.7,0.5) {\tiny $m$};
	\draw (1.7,0.4) -- (2,0);
	
	\node at (0.7,0.9) {\tiny $b+{1\over 4}$};
	\node at (2.6,0.2) {\tiny $b$};
	\node at (2.2,-2) {\tiny $B$};
\end{tikzpicture}
\caption{Left: Illustration of the construction within the set $K(m)$ defining the event $A(m,\xi)$ for $d=2$. The two random points in $K_i(m)$, $i\in\{1,2\}$ are indicated by the black squares. Right: Illustration of the construction within the hyperplane $\{(v,h)\in\RR^{d-1}\times\RR:h=1/32\}$ for $d=3$. Shown are the balls $B_{b+1/4}(m)$ and $B_b(m)$, the points $x_i(m)$, $i\in\{1,2,3\}$, which form a regular simplex (triangle), the balls $B_\delta(x_i(m))$ around them as well as the ball $B$.}
\label{fig:Variance}
\end{figure}
	
On the other hand for sufficiently big $b>1/4$ there exists $\delta>0$ such that for any paraboloid $\Pi\neq \Pi((v_1,h_1),\ldots, (v_d,h_d))$ under condition that $\Pi\cap \Phi(\xi)$ is a paraboloid facet of $\Phi(\xi)$ and $\Pi^{\downarrow}\cap Z(m)\neq \emptyset$ there exists $1\leq i\leq d$ such that $K_i(m)\subset \inter\Pi^{\downarrow}$. In order to prove this consider the $(d-1)$-dimensional ball $B:=\Pi^{\downarrow}\cap\{(v,h)\colon h=1/32\}$, Figure \ref{fig:Variance} (right). Then $K_i(m)\subset \inter\Pi^{\downarrow}$ if and only if $B_{\delta}(x_i(m))\subset \inter B$ and $\Pi^{\downarrow}$ intersects $Z(m)$ if and only if $B$ intersects $B_{\delta}(m)$. Moreover, since $\Pi\cap \Phi(\xi)$ is a paraboloid facet of $\Phi(\xi)$ the ball $B$ is not contained in $B_{b+1/4}(m)$ otherwise $\Pi\subset K(m)$ and, thus, $\Pi=\Pi((v_1,h_1),\ldots, (v_d,h_d))$. Taking into account that $x_i(m)$ are vertices of a regular simplex without loss of generality denote by $R$ the radius of the circumsphere of a simplex $\conv(x_1(m),\ldots,x_{d-1}(m),m)$. We note that for $b>2R-1/4$ any ball passing through $m$ and not containing any of points $x_i(m)$, $1\leq i\leq d$ in its interior is contained in $B_{b+1/4}(m)$. Since the circumradius $R$ is a continues function of $m$, $x_i(m)$, $1\leq i\leq d-1$ there exists some $\delta$-neighbourhoods of $m$ and $x_i(m)$, $1\leq i\leq d$ such that for $b=3R-1/4$ any ball intersecting $B_{\delta}(m)$ and not containing any of sets $B_{\delta}(x_i(m))$ in its interior is contained in $B_{b+1/4}(m)$, which leads to contradiction with $\Pi\neq \Pi((v_1,h_1),\ldots, (v_d,h_d))$ and the proof is completed.

The statement above means that there are no other cells in $\cT$, which could be 'destroyed' by adding a point $(v,h)\in Z(m)$ to the process $\xi$. In this case
\begin{equation}\label{eq:19-07a}
D_{(v,h)}(X_{\cT,k}(I_n)|\cG)\ge 1
\end{equation}
holds almost surely and using \eqref{eq:12.07.21_2} for $\eta_{\beta}$ and \eqref{eq:12.07.21_3} for $\zeta$ we conclude that
\begin{align*}
\Var X_{\cT,k}\geq\EE\Var (X_{\cT,k}(I_n)|\cG) &\ge \EE\sum_{K(m)\in\cK\colon {\bf 1}(A(m))=1}\mu(Z_{\varepsilon}(m))\\
&\ge C\sum_{K(m)\in\cK}\PP(A(m))\ge Cn^{d-1},
\end{align*}
which together with \eqref{eq:13.07.21_2} proves \eqref{eq:13.07.21_1}.
\end{proof}

\begin{proof}[Proof of Theorem \ref{thm:CLT}]
The proof relies on a central limit theorem for absolutely regular stationary random fields (or tessellations) in $\RR^{d-1}$ taken from \cite[Theorem 3.1]{Heinrich94} (see also \cite[Theorem 4.9]{HeinrichBook}) and which we are going to rephrase now. Suppose that for $a>0$, $b-a\ge 3$ there are functions $B_1,B_2:[0,\infty)\to[0,\infty)$ and a constant $c_0>0$ such that the absolute regularity coefficient $\mathscr{B}(a,b)$ for the stationary and random closed set $\cT$ satisfies
$$
\mathscr{B}(a,b) \leq\begin{cases}
B_1(b-a) &: b-a\geq c_0a\\
a^{d-2}B_2(b-a) &: b-a<c_0a.
\end{cases}
$$
If for some $\delta>0$
\begin{align}
\label{eq:Cond1}\EE X_{\cT,k}(I_1)^{2+\delta} & < \infty,\\
\label{eq:Cond2}\sum\limits_{r=1}^\infty r^{d-2}B_1(r)^{\delta\over\delta+2} & < \infty,\\
\label{eq:Cond3}\lim\limits_{r\to\infty}r^{2d-3}B_2(r) & =0,
\end{align}
then the limit in \eqref{eq:CLTVariance} exists in $[0,\infty]$ and \eqref{eq:CLTConvergence} holds. So, it remains to check \eqref{eq:Cond1}, \eqref{eq:Cond2} and \eqref{eq:Cond3} for the two tessellations $\widetilde{\cD}_\beta$ and $\widetilde{\cD}$.
\begin{itemize}
\item[1.] If $\cT=\widetilde{\cD}_\beta$ for some $\beta>-1$, we may take an arbitrary $\delta\in\NN$. Then \eqref{eq:Cond1} is satisfied since $X_{\widetilde{\cD}_\beta,k}(I_1)$ has finite moments of all orders thanks to Lemma \ref{lem:MomentsForCLT}. According to Theorem \ref{thm:BetaEstimates} the functions $B_1$ and $B_2$ can be taken as
\begin{align*}
B_1(r) &:= c_7\,r\,e^{-c_6\,r^{d+1+2\beta}},\\
B_2(r) &:= \tilde{c}_7\, r^{-(d-3)}\,e^{-\tilde{c}_7\, r^{d+1+2\beta}},
\end{align*}
where the constants are the same as in Theorem \ref{thm:BetaEstimates}. Then \eqref{eq:Cond2} and \eqref{eq:Cond3} are trivially satisfied.

\item[2.] If $\cT=\widetilde{\cD}$ we may take again an arbitrary $\delta\in\NN$. Then \eqref{eq:Cond1} is satisfied as $X_{\widetilde{\cD},k}(I_1)$ has finite moments of all orders by Lemma \ref{lem:MomentsForCLT} again. Moreover, In view of Theorem \ref{thm:BetaEstimates} the functions $B_1$ and $B_2$ can be taken as
\begin{align*}
B_1(r) &= c_{10}\,e^{-c_9r^2},\\
B_2(r) &= \tilde{c}_{10}\,r^{-(d-2)}\,e^{-c_9r^2},
\end{align*}
where the constants are again the same as in Theorem \ref{thm:BetaEstimates}. In this case conditions \eqref{eq:Cond2} and \eqref{eq:Cond3} are again trivially satisfied.
\end{itemize}
This completes the proof of \eqref{eq:CLTConvergence}. The strict positivity of $s_k^2$ has been shown in Lemma \ref{lem:sk>0}. The proof is thus complete.
\end{proof}

\begin{remark}\label{rem:CLTbetaPrime}\rm
	One might wonder whether the argument in the proof of Theorem \ref{thm:CLT} can also be used to prove a central limit theorem for the number of $k$-dimensional faces of $\beta'$-Delaunay tessellations. First, by Theorem \ref{thm:BetaEstimates} the two functions $B_1$ and $B_2$ are
	\begin{align*}
		B_1(r) &= c_8\,r^{-(2\beta-d-1)}\\
		B_2(r) &= \tilde{c}_8\,r^{-2\beta+3}
	\end{align*}
	with the same constants as in Theorem \ref{thm:BetaEstimates}. Then \eqref{eq:Cond3} is satisfied for $\beta>d$ and \eqref{eq:Cond2} is equivalent to $d+\delta d<1+\delta\beta$. For example, taking $\delta=1$ this inequality holds once $\beta>d$ is satisfied. However, using the argument from the proof of Theorem \ref{thm:CLT} we were not able to verify \eqref{eq:Cond1} for $k=0$ and with $\delta=1$, for example. In contrast to the $\beta$- and the Gaussian Delaunay tessellation, the concentration bound in Lemma \ref{lm:4_3} for the $\beta'$-Delaunay tessellation is not strong enough to allow such a conclusion, as the resulting integral diverges. In addition, the transition from $k=0$ to $k\geq 1$ would require $\sup_{v\in B_\varrho}\EE f_k(v)^{2m}$ to be finite at least for $m=3$, recall \eqref{eq:19-01-21a}. Again in contrast to the $\beta$- and the Gaussian Delaunay tessellation, to prove such a result for the $\beta'$-Delaunay tessellation seems a non-trivial task and presumably requires further assumptions on the model parameter $\beta$. A similar comment applies to Theorem \ref{thm:CLTkvolume} below as well.
\end{remark}

As a second application we discuss the central limit theorem for the $k$-volume of the $k$-skeleton of $\beta$- and Gaussian-Delaunay tessellations. To introduce the set-up fix $k\in\{0,1,\ldots,d-2\}$ and for a stationary and isotropic random tessellation $\cT$ in $\RR^{d-1}$ let $\cT_k$ be the union of all $k$-dimensional faces of all cells of $\cT$. We refer to $\cT_k$ as the $k$-skeleton of $\cT$. The $k$-dimensional Hausdorff measure $\cH^k$ restricted to $\cT_k$ is a stationary and isotropic random measure in $\RR^{d-1}$ and we denote by $\nu_{\cT,k}$ its intensity. In other words, $\nu_{\cT,k}$ is the $k$-volume ($k$-dimensional Hausdorff measure) of the $k$-skeleton $\cT_k$ per unit $(d-1)$-volume. We are interested in the random variables $Y_{\cT,k}(I_n):=\cH^k(\cT_k\cap I_n)$, $n\in\NN$.

\begin{theorem}\label{thm:CLTkvolume}
	Let $\cT$ be one of the tessellations $\widetilde{\cD}_\beta=\cD(\eta_\beta)$ for $\beta>-1$ or $\widetilde{\cD}=\cD(\zeta)$. Further, fix $k\in\{0,1,\ldots,d-1\}$. Then the limit
	\begin{equation}\label{eq:CLTVarianceVol}
		t_k^2 := \lim_{n\to\infty}(2n)^{-(d-1)}\Var Y_{\cT,k}(I_n)
	\end{equation}
	exists and is strictly positive. Moreover, we have the convergence in distribution
	\begin{equation}\label{eq:CLTConvergenceVol}
		{Y_{\cT,k}(I_n)-(2n)^{d-1}\nu_{\cT,k}\over (2n)^{d-1\over 2}}\overset{d}{\longrightarrow} G,
	\end{equation}
	as $n\to\infty$, where $G\sim\mathcal{N}(0,t_k^2)$ is a centred Gaussian random variable with variance $t_k^2$.
\end{theorem}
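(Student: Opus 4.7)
The plan is to run the proof of Theorem~\ref{thm:CLT} almost verbatim, once again invoking Heinrich's central limit theorem for absolutely regular stationary random fields. The absolute regularity coefficients $\mathscr{B}(a,b)$ are a property of the random set $\cT$ itself and do not depend on the particular functional one reads off it, so the functions $B_1,B_2$ supplied by Theorem~\ref{thm:BetaEstimates} may be used unchanged; in particular conditions \eqref{eq:Cond2} and \eqref{eq:Cond3} are verified in exactly the same way as in the proof of Theorem~\ref{thm:CLT}. Only two ingredients are genuinely new: (i) the moment bound $\EE Y_{\cT,k}(I_1)^{2+\delta}<\infty$ for some $\delta>0$, and (ii) the strict positivity of $t_k^2$.

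For (i) I would prove a volumetric analogue of Lemma~\ref{lem:MomentsForCLT}. Setting $\varrho:=\sqrt{d-1}$, the crude domination
$$
Y_{\cT,k}(I_1)\le Y_{\cT,k}(B_\varrho)\le\binom{d}{k+1}\,N(\xi)\,D(\xi)^k
$$
holds almost surely, where $N(\xi)$ counts the cells of $\cT$ intersecting $B_\varrho$ and $D(\xi)$ is the maximal diameter of such a cell. By the stabilization estimate of Lemma~\ref{lm:4_4}, every cell meeting $B_\varrho$ has all of its vertices inside a larger ball $B_{\varrho+r(\xi)}$ at heights at most $H(\xi)$, where $r(\xi)$ and $H(\xi)$ admit sub-exponential (respectively double-exponential) tails by Lemma~\ref{lm:4_3}. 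Hence $D(\xi)\le 2(\varrho+r(\xi))$ and $N(\xi)\le\binom{\xi(B_{\varrho+r(\xi)}\times[0,H(\xi)])}{d}$; both quantities have finite moments of every order by the same line of reasoning used in Lemma~\ref{lem:MomentsForCLT}, and H\"older's inequality then yields $\EE Y_{\cT,k}(I_1)^m<\infty$ for every $m\in\NN$. Consequently \eqref{eq:Cond1} holds for every $\delta>0$.

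For (ii) I would duplicate the construction from the proof of Lemma~\ref{lem:sk>0}: for $m\in\ZZ^{d-1}\cap I_n$ use the same sets $K(m)$, $K_i(m)$, $Z(m)$ and events $A(m,\xi)$, and condition on the $\sigma$-algebra $\cG$ generated by the points of $\xi$ lying outside those $K(m)$'s on which $A(m,\xi)$ holds. The Fock-space Poincar\'e bound then gives
$$
\Var Y_{\cT,k}(I_n)\ge\EE\Var(Y_{\cT,k}(I_n)\mid\cG)\ge\EE\int_{\RR^d}\bigl(\EE[D_x Y_{\cT,k}(I_n)\mid\cG]\bigr)^2\,\mu(\dint x).
$$
As in Lemma~\ref{lem:sk>0}, on the event $A(m,\xi)$ the $d$ sample points $v_1,\ldots,v_d$ form a cell $\conv(v_1,\ldots,v_d)$ of $\cT$, and adding an extra point $(v^*,h^*)\in Z(m)$ locally subdivides this cell into $d$ new simplices sharing $v^*$, without removing any $k$-face of the surrounding tessellation as soon as $k\le d-2$. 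The resulting change in the $k$-skeleton measure is therefore
$$
D_{(v^*,h^*)}Y_{\cT,k}(I_n)=\sum_{\substack{S\subset\{1,\ldots,d\}\\ |S|=k}}\cH^k\bigl(\conv(\{v^*\}\cup\{v_j:j\in S\})\bigr)\ge c(d,b,\delta)>0,
$$
the lower bound being uniform over all admissible choices of $(v^*,v_1,\ldots,v_d)$. Substituting this into the Poincar\'e bound and summing over $m\in\ZZ^{d-1}\cap I_n$ yields $\Var Y_{\cT,k}(I_n)\ge c'\,n^{d-1}$, whence $t_k^2>0$.

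The main technical obstacle is producing the uniform positive lower bound $c(d,b,\delta)$ on the $k$-volumes of the newly created simplices, which has to hold for every $v^*\in Z(m)$ and every configuration $v_j\in K_j(m)$, $j=1,\ldots,d$. This amounts to showing that $v^*$ stays uniformly separated from every affine $(k-1)$-flat spanned by $k$ of the $v_j$'s, an elementary affine-geometry statement that can be arranged by choosing $b$ sufficiently large relative to $\delta$ (equivalently, by further shrinking the $K_i(m)$). Once this quantitative non-degeneracy is in hand, the remainder of the argument is a direct transcription of the proof of Theorem~\ref{thm:CLT}.
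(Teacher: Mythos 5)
Your proposal is correct and follows essentially the same route as the paper: Heinrich's CLT with the mixing bounds of Theorem~\ref{thm:BetaEstimates} reused verbatim, plus a moment bound for $Y_{\cT,k}(I_1)$ and a lower variance bound obtained by rerunning the $K(m)$-construction of Lemma~\ref{lem:sk>0} with the add-one-cost now bounded below by the smallest $k$-volume of a newly created $k$-face, exactly as in the paper. The only local deviation is your moment bound, where the paper sidesteps cell diameters and stabilization entirely via the deterministic estimate $\cH^k(F\cap I_1)\le\max_E\cH^k(E\cap I_1)\le\kappa_k(d-1)^{k/2}$ over $k$-dimensional flats $E$, which reduces \eqref{eq:Cond1} directly to the moments of $X_{\cT,0}(B_\varrho)\sup_v f_k(v)$ already controlled in Lemma~\ref{lem:MomentsForCLT}; your diameter-based domination also works but is heavier than necessary.
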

\begin{proof}
We may represent the random variables $(Y_{\cT,k}(I_n)-(2n)^{d-1}\nu_{\cT,k})/(2n)^{d-1\over 2}$ as $(2n)^{-{d-1\over 2}}S_n$, where
$$
S_n:=\sum_{m\in\ZZ^{d-1}\cap I_n}[\cH^k(\cT_k\cap([0,1]^{d-1}+m))-\nu_{\cT,k}],
$$
and are thus in the position to apply the central limit theorem of Heinrich for absolutely regular stationary random fields \cite[Theorem 6.1]{Heinrich94} or \cite[Theorem 4.9]{HeinrichBook}. To this end, conditions \eqref{eq:Cond1}--\eqref{eq:Cond3} need to be satisfied for $X_{\cT,k}$ replaced by $Y_{\cT,k}$. While \eqref{eq:Cond2} and \eqref{eq:Cond2} have already been checked in the proof of Theorem \ref{thm:CLT}, it remains to deal with \eqref{eq:Cond1}. Since $\cT_k$ is almost surely the union of $k$-dimensional simplices, for any $m\in\NN$ it follows that
\begin{align*}
\EE Y_{\cT,k}(I_1)^m \leq \EE\Big(\max_E\cH^k(E\cap I_1)X_{\cT,0}(I_1)\sup_{v\in I_1}f_k(v)\Big)^m,
\end{align*}
where the maximum is taken over all $E$-dimensional affine subspaces in $\RR^{d-1}$, $X_{\cT,0}(I_1)$ is the number of vertices of $\cT$ in $I_1$ and $f_k(v)$ denotes the number of $k$-faces of $\cT$ adjacent to $v$. Putting $\varrho:=\sqrt{d-1}$, we have $\max_E\cH^k(E\cap I_1)\leq\kappa_k(d-1)^{k\over 2}$ and hence
$$
\EE Y_{\cT,k}(I_1)^m \leq \kappa_k^m(d-1)^{km\over 2}\EE\Big(X_{\cT,0}(B_\varrho)\sup_{v\in B_\varrho}f_k(v)\Big)^m.
$$
The last expression is finite for both choices $\cT=\widetilde{\cD}_\beta$ and $\cT=\widetilde{\cD}$ according to the discussion after \eqref{eq:19-01-21a}.

The strict positivity of the limiting variance $t_k^2$ follows along the lines of the proof of Lemma \ref{lem:sk>0}. In fact, the only change occurs at \eqref{eq:19-07a}, which has to be replaced by
$$
D_{(v,h)}(X_{\cT,k}(I_n)|\cG)\ge c_{d,k,\delta},
$$
where the constant $c_{d,k,\delta}>0$, which only depends on $d$, $k$ and $\delta$, is the smallest possible $k$-volume of a $k$-face of a simplex with vertices in $B_\delta(x_i(m))$, where the points $x_i(m)$, $1\leq i\leq d$, are the vertices of a regular simplex with vertices on a sphere of radius $7/32$ (compare with the construction at the beginning of Lemma \ref{lem:sk>0}. This completes the argument.
\end{proof}

\subsection*{Acknowledgement}
Most of this work was carried out during the Trimester Program \textit{The Interplay between High Dimensional Geometry and Probability} at the Hausdorff Research Institute for Mathematics in Bonn.\\
ZK was supported by the DFG under Germany's Excellence Strategy  EXC 2044 -- 390685587, \textit{Mathematics M\"unster: Dynamics - Geometry - Structure}. CT and ZK were supported by the DFG via the priority program \textit{Random Geometric Systems}.

\end{document}